\newtheorem{teo}{Theorem}[section]
\newtheorem{defi}{Definition}[section]
\newtheorem{lemma}{Lemma}[section]
\newtheorem{cor}{Corollary}[section]
\newtheorem{prop}{Proposition}[section]
\newtheorem{rem} {Remark}[section]
\DeclareMathOperator{\dvol}{dvol}
\DeclareMathOperator{\supp}{supp}
\DeclareMathOperator{\id}{Id}
\DeclareMathOperator{\vol}{vol}
\DeclareMathOperator{\reg}{reg}
\DeclareMathOperator{\sing}{sing}
\DeclareMathOperator{\Tr}{Tr}
\DeclareMathOperator{\abs}{abs}
\DeclareMathOperator{\rel}{rel}
\title{\huge \bf Degenerating Hermitian metrics, canonical bundle and spectral convergence}
\author{Francesco Bei  \bigskip \\
Dipartimento di Matematica, Sapienza Universit\`a di Roma,\\ E-mail addresses: \ bei@mat.uniroma1.it \     francescobei27@gmail.com }
\date{}
\begin{document}

\maketitle

\begin{abstract}
\noindent Let $(M,J)$ be a compact complex manifold of complex dimension $m$ and let $g_s$  be a one-parameter family of Hermitian forms on $M$ that are smooth and positive definite for each fixed $s\in (0,1]$ and that somehow degenerates to a Hermitian pseudometric $h$ for $s$ tending to $0$. In this paper under rather general assumptions on $g_s$ we prove various spectral convergence type theorems for the family of Hodge-Kodaira Laplacians $\Delta_{\overline{\partial},m,0,s}$ associated to $g_s$ and acting on the canonical bundle of $M$. In particular we show that, as $s$ tends to zero, the eigenvalues, the heat operators and the heat kernels corresponding to the family $\Delta_{\overline{\partial},m,0,s}$ converge to the eigenvalues, the heat operator and the heat kernel of $\Delta_{\overline{\partial},m,0,\mathrm{abs}}$, a suitable self-adjoint operator with entirely discrete spectrum defined on the limit space $(A,h|_A)$.\\
\end{abstract}

\noindent\textbf{Keywords}: Hermitian pseudometric,  Hodge-Kodaira Laplacian, canonical bundle, degeneration of Hermitian metrics, spectral convergence, Mosco convergence, Hermitian complex space. \\

\noindent\textbf{Mathematics subject classification}:  32W05, 58J50, 49R05. 

\tableofcontents

\section*{Introduction}
Whenever we have a sequence of Riemannian manifolds $\{(M_n,g_n)\}_{n\in \mathbb{N}}$ that somehow degenerate to a limit space $X$ it is a very interesting question to analyze the limit behavior of the spectral invariants, such as eigenvalues, eigenvectors, traces and so on, associated to the sequence $\{(M_n,g_n)\}_{n\in \mathbb{N}}$. This and related topics have been investigated in so many papers that even to report a representative sample of the literature is beyond the scope of this introduction. Just to mention few relevant works we can recall \cite{Anne}, \cite{ChDo}, \cite{Chavel}, \cite{CheegerColding}, \cite{Ding}, \cite{Fukaya}, \cite{Ji}, \cite{Kasue}, \cite{KaKu}, \cite{KuSh}, \cite{Lott} and \cite{Wolpert}. An interesting branch of this circle of ideas is the one that deals with  (real or complex) algebraic  varieties understood as the limit of a sequence of smooth algebraic varieties. In this kind of situation the behavior of the eigenvalues of the Laplace-Beltrami operator have been investigated in various papers, see for instance \cite{Gromov} which is devoted to real algebraic and semi-algebraic sets, \cite{Went} which deals with a smooth family of compact surfaces that degenerate to a surface with conical singularities  and \cite{KIY} which is concerned with a one-parameter degenerating family of projective algebraic manifolds in $\mathbb{C}\mathbb{P}^n$ over the unit disc. In this paper we tackle a somewhat similar problem but rather than the Laplace-Beltrami operator we are interested in the Hodge-Kodaira Laplacian acting on the sections of the canonical bundle. More precisely we are concerned with the following setting: $(M,J)$ is a compact complex manifold of complex dimension $m$ endowed with a Hermitian pseudometric $h$. We recall that a Hermitian pseudometric is nothing but a semipositive definite Hermitian product strictly positive on an open and dense subset $A\subset M$. As explained in the final part of this paper this is a rather general framework that encompasses for instance complex projective varieties endowed with the Fubini-Study metric and more generally compact and irreducible Hermitian complex spaces.  
Consider now the Dolbeault operator $\overline{\partial}_{m,0}:\Omega^{m,0}_c(A)\rightarrow \Omega^{m,1}_c(A)$ and let $\overline{\partial}_{m,0}^t:\Omega^{m,1}_c(A)\rightarrow \Omega^{m,0}_c(A)$ be the formal adjoint of $\overline{\partial}_{m,0}:\Omega^{m,0}_c(A)\rightarrow \Omega^{m,1}_c(A)$ with respect to $h|_A$. Finally consider $\overline{\partial}_{m,0}^t\circ \overline{\partial}_{m,0}:\Omega^{m,0}_c(A)\rightarrow \Omega^{m,0}_c(A)$, that is the Hodge-Kodaira Laplacian with respect to $h|_A$ acting on the canonical bundle of $A$. In \cite{FBei} several results concerning the $L^2$-spectral theory of  $\overline{\partial}_{m,0}^t\circ \overline{\partial}_{m,0}:\Omega^{m,0}_c(A)\rightarrow \Omega^{m,0}_c(A)$ were proved. In particular, by only requiring that $(A,g|_A)$ is parabolic with respect to some (and therefore all) Riemannian metric $g$ on $M$, we showed that given any closed extension $\mathfrak{\overline{d}}_{m,0}:L^2\Omega^{m,0}(A,h|_A)\rightarrow L^2\Omega^{m,1}(A,h|_A)$ of $\overline{\partial}_{m,0}:\Omega^{m,0}_c(A)\rightarrow \Omega^{m,1}_c(A)$  the operator 
$\mathfrak{\overline{d}}_{m,0}^*\circ \mathfrak{\overline{d}}_{m,0}:L^2\Omega^{m,0}(A,h|_A)\rightarrow L^2\Omega^{m,0}(A,h|_A)$ is a self-adjoint extension of $\overline{\partial}_{m,0}^t\circ \overline{\partial}_{m,0}:\Omega^{m,0}_c(A)\rightarrow \Omega^{m,0}_c(A)$ with entirely discrete spectrum, see \cite{FBei} Th. 4.1. Now let us introduce the product manifold $M\times [0,1]$ and let $p:M\times [0,1]\rightarrow M$ be the natural projection. Let $g_s$ be any measurable section of $p^*T^*M\otimes p^*T^*M\rightarrow M\times [0,1]$ such that
\begin{enumerate}
\item $g_s$ is a Hermitian metric on $M$ for each $s\in (0,1]$;
\item $g_s|_{A\times [0,1]}\in C^{\infty}(A\times [0,1],p^*T^*A\otimes p^*T^*A)$;
\item $g_0|_A=h|_A$;
\item $(A,g|_A)$ is parabolic with respect to some Riemannian metric $g$ on $M$;
\end{enumerate}
Roughly speaking  $g_s$ is a one-parameter family of Hermitian metrics on $M$ that on $A$ degenerates smoothly to a Hermitian pseudometric $h$ for $s\rightarrow 0$. 
For each $s\in (0,1]$ let $\Delta_{\overline{\partial},m,0,s}:L^2\Omega^{m,0}(M,g_s)\rightarrow L^2\Omega^{m,0}(M,g_s)$ be the unique closed (and therefore self-adjoint) extension of the Hodge-Kodaira Laplacian, with respect to the metric $g_s$, acting on the canonical bundle of $M$. It is well known by elliptic theory on compact manifolds that $\Delta_{\overline{\partial},m,0,s}:L^2\Omega^{m,0}(M,g_s)\rightarrow L^2\Omega^{m,0}(M,g_s)$ has entirely discrete spectrum, see e.g. \cite{Gilkey}.  We have finally all the ingredients to formulate the first main question addressed by this paper:\\

\noindent Let $s\in (0,1]$ and let $\{\lambda_k(s)\}_{k\in \mathbb{N}}$ be the eigenvalues of $\Delta_{\overline{\partial},m,0,s}:L^2\Omega^{m,0}(M,g_s)\rightarrow L^2\Omega^{m,0}(M,g_s)$. Under what assumptions on $g_s$ does $\lambda_k(s)\rightarrow \lambda_k(0)$ as $s\rightarrow 0$, where $\{\lambda_k(0)\}_{k\in \mathbb{N}}$ are the eigenvalues of $\mathfrak{\overline{d}}_{m,0}^*\circ \mathfrak{\overline{d}}_{m,0}:L^2\Omega^{m,0}(A,h|_A)\rightarrow L^2\Omega^{m,0}(A,h|_A)$ and  $\mathfrak{\overline{d}}_{m,0}:L^2\Omega^{m,0}(A,h|_A)\rightarrow L^2\Omega^{m,1}(A,h|_A)$ is a suitable closed extension of $\overline{\partial}_{m,0}:\Omega^{m,0}_c(A)\rightarrow \Omega^{m,1}_c(A)$?\\

\noindent In the first main result of this paper we provide a positive answer to the above question by requiring in addition that there exists a positive constant $\nu$ such that
$\frac{1}{\nu}h\leq g_s\leq \nu g_1$ for each $s\in (0,1]$. More precisely we have: 
\begin{teo}
\label{specspec}
Let $g_s$ be any measurable section of $p^*T^*M\otimes p^*T^*M\rightarrow M\times [0,1]$  that satisfies the fourth properties listed above. Assume moreover that there exists a positive constant $\nu\in \mathbb{R}$ such that $\frac{1}{\nu}\leq g_s\leq \nu g_1$ for each $s\in (0,1]$. Let
\begin{equation}
\label{mimi}
\Delta_{\overline{\partial},m,0,\abs}:L^2\Omega^{m,0}(A,h|_A)\rightarrow L^2\Omega^{m,0}(A,h|_A)
\end{equation}
be the operator defined as $\Delta_{\overline{\partial},m,0,\abs}:=\overline{\partial}_{m,0,\max}^*\circ\overline{\partial}_{m,0,\max}$ where 
\begin{equation}
\label{mumu}
\overline{\partial}_{m,0,\max}:L^2\Omega^{m,0}(A,h|_A)\rightarrow L^2\Omega^{m,1}(A,h|_A)
\end{equation}
is the maximal extension of $\overline{\partial}_{m,0}:\Omega_c^{m,0}(A)\rightarrow \Omega_c^{m,1}(A)$ and 
$\overline{\partial}_{m,0,\max}^*:L^2\Omega^{m,1}(A,h|_A)\rightarrow L^2\Omega^{m,0}(A,h|_A)$ is the adjoint of \eqref{mumu}. 
For each $s\in (0,1]$ let $0\leq\lambda_1(s)\leq\lambda_2(s)\leq...\leq \lambda_k(s)\leq...$ be the eigenvalues of 
\begin{equation}
\label{gurugu}
\Delta_{\overline{\partial},m,0,s}:L^2\Omega^{m,0}(M,g_s)\rightarrow L^2\Omega^{m,0}(M,g_s)
\end{equation}
and let $0\leq \lambda_1(0)\leq \lambda_2(0)\leq...\leq \lambda_k(0)\leq...$ be the eigenvalues of \eqref{mimi}. Then  $$\lim_{s\rightarrow 0} \lambda_k(s)=\lambda_k(0)$$ for each positive integer $k$.\\ Moreover let $\{s_n\}_{n\in \mathbb{N}}\subset (0,1]$ be any sequence such that $s_n\rightarrow 0$ as $n\rightarrow \infty$ and let  $\{\eta_1(s_n),\eta_2(s_n),.$ $..,\eta_k(s_n),...\}$ be  any orthonormal basis of  $L^2\Omega^{m,0}(M,g_{s_n})$  made by eigenforms of \eqref{gurugu} with corresponding eigenvalues  $\{\lambda_1(s_n),$ $...,\lambda_k(s_n),...\}$. Then there exists a subsequence $\{z_n\}\subset \{s_n\}$ and an orthonormal basis $\{\eta_1(0),\eta_2(0),...,\eta_k(0),...\}$ of $L^2\Omega^{m,0}(A,h|_A)$ made by eigenforms of \eqref{mimi} with corresponding eigenvalues $\{\lambda_1(0),...,\lambda_k(0),...\}$ such that 
$$\lim_{n\rightarrow \infty} \eta_k(z_n)=\eta_k(0)$$ in $L^2\Omega^{m,0}(A,h|_A)$ for each positive integer $k$.
\end{teo}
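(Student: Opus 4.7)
The plan is to recast the eigenvalue statement as a Mosco convergence of Dirichlet-type quadratic forms on a single fixed Hilbert space, and then upgrade it, via an asymptotic compactness estimate, to min--max convergence of eigenvalues with subsequential $L^2$-convergence of eigenforms. The decisive structural fact is that on the canonical bundle the $L^2$-pairing is \emph{intrinsic}: for any Hermitian form $g$ on $M$ and any $(m,0)$-forms $\eta_1,\eta_2$ one has
$$\langle\eta_1,\eta_2\rangle_{L^2(M,g)}\;=\;c_m\!\int_M\eta_1\wedge\overline{\eta_2},$$
independently of $g$, with $c_m$ a dimensional constant. Hence all the spaces $L^2\Omega^{m,0}(M,g_s)$ coincide with a single Hilbert space $H$ and, since $Z$ has Lebesgue measure zero, also with $L^2\Omega^{m,0}(A,g_0|_A)$. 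On this fixed $H$ I work with the closed nonnegative quadratic forms $Q_s(\eta):=\|\overline{\partial}\eta\|^2_{L^2\Omega^{m,1}(M,g_s)}$ for $s\in(0,1]$ and $Q_0(\eta):=\|\overline{\partial}_{m,0,\max}\eta\|^2_{L^2\Omega^{m,1}(A,g_0|_A)}$, whose associated self-adjoint operators are precisely $\Delta_{\overline{\partial},m,0,s}$ and $\Delta_{\overline{\partial},m,0,\abs}$.

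I would then prove Mosco convergence of $\{Q_s\}$ to $Q_0$ in two steps. For the $\Gamma$-liminf, given $\eta_s\rightharpoonup\eta$ in $H$ with $\sup_s Q_s(\eta_s)<\infty$, I use the duality
$$\langle\overline{\partial}\eta_s,\phi\rangle_{L^2\Omega^{m,1}(M,g_s)}=\langle\eta_s,\overline{\partial}^{t,s}\phi\rangle_H,\qquad \phi\in\Omega_c^{m,1}(A);$$
since $\supp\phi$ is compact in $A$ where $g_s\to g_0$ smoothly, both $\overline{\partial}^{t,s}\phi\to\overline{\partial}^{t,0}\phi$ in $H$ and $\|\phi\|_{L^2(M,g_s)}\to\|\phi\|_{L^2(A,g_0|_A)}$. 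Passing to the limit and exploiting the density of $\Omega_c^{m,1}(A)$ in $L^2\Omega^{m,1}(A,g_0|_A)$ yields $\eta\in\mathrm{dom}(\overline{\partial}_{m,0,\max})$ with $Q_0(\eta)\leq\liminf_s Q_s(\eta_s)$. For the $\Gamma$-limsup (recovery sequence) I would invoke the parabolicity hypothesis, together with the techniques used in \cite{FBei} Th.~4.1, to conclude that $\Omega_c^{m,0}(A)$ is a core for $\overline{\partial}_{m,0,\max}$. Approximating $\eta\in\mathrm{dom}(Q_0)$ in graph norm by $\eta_k\in\Omega_c^{m,0}(A)$, extending each $\eta_k$ by zero to a smooth section on $M$ (legitimate because $\supp\eta_k\Subset A$), and noting that $Q_s(\eta_k)\to Q_0(\eta_k)$ as $s\to 0$ by smooth convergence of $g_s$ on the fixed compact set $\supp\eta_k$, a standard diagonalization supplies the recovery sequence. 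The hypothesis $g_0\leq\mathfrak{a}g_s$ enters here through the pointwise inequality $|\beta|^2_{g_s}\dvol_{g_s}\leq\mathfrak{a}\,|\beta|^2_{g_0}\dvol_{g_0}$ on $(m,1)$-forms on $A$, which supplies the coherent inclusion $\mathrm{dom}(Q_0)\subseteq\mathrm{dom}(Q_s)$ and makes all the forms $Q_s$ comparable in a uniform sense across the parameter range.

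Next I would establish asymptotic compactness: any sequence $\{\eta_{s_n}\}$ with $\|\eta_{s_n}\|_H^2+Q_{s_n}(\eta_{s_n})\leq C$ has an $H$-strongly convergent subsequence. The smooth dependence of $g_s$ on the compact interval $[0,1]$ supplies a reference Hermitian metric $g_1$ with $g_s\leq C'g_1$ uniformly in $s$; dualizing, on $(m,1)$-forms one obtains $|\beta|^2_{g_1}\dvol_{g_1}\leq C'|\beta|^2_{g_s}\dvol_{g_s}$ (decomposing $\beta=\eta\otimes\alpha$ with $\eta$ of bidegree $(m,0)$ and using the intrinsic nature of $|\eta|^2_g\dvol_g$). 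Hence $\{\overline{\partial}\eta_{s_n}\}$ stays bounded in $L^2\Omega^{m,1}(M,g_1)$, and the ellipticity of the Hodge--Kodaira Laplacian on the fixed compact Hermitian manifold $(M,g_1)$ gives $\|\eta_{s_n}\|_{H^1(M,g_1)}\lesssim \|\eta_{s_n}\|_{L^2(M,g_1)}+\|\overline{\partial}\eta_{s_n}\|_{L^2(M,g_1)}$; Rellich--Kondrakov then produces the required $L^2$-precompact subsequence.

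The combination of Mosco convergence with asymptotic compactness is the classical Kuwae--Shioya/Attouch/Stollmann framework that produces simultaneously (i) the min--max convergence $\lambda_k(s)\to\lambda_k(0)$ for every $k$, exploiting the discreteness of $\spec(\Delta_{\overline{\partial},m,0,\abs})$ secured by \cite{FBei} Th.~4.1, and (ii) subsequential $H$-convergence of orthonormal eigenforms, since by asymptotic compactness any weak limit of a bounded eigenform sequence is strongly convergent and must be an eigenform of the limit operator at the limit eigenvalue; a Gram--Schmidt adjustment across successive eigenspaces delivers the orthonormal basis statement. The main technical obstacle I expect is the $\Gamma$-limsup step, and specifically the core property of $\Omega_c^{m,0}(A)$ for $\overline{\partial}_{m,0,\max}$: this is the true analytic content of the parabolicity assumption and is what bridges the distributional $\overline{\partial}$ on $M$ with that on $A$ across the degeneracy locus $Z$; a secondary subtlety is verifying that the inner products $\langle\overline{\partial}v_i(s),\overline{\partial}v_j(s)\rangle_{L^2(M,g_s)}$ of recovery sequences converge, so that the test subspaces used in the min--max arguments retain their dimension in the limit.
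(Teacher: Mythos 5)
Your proposal is correct and follows essentially the same route as the paper: the metric-independence of the $(m,0)$ $L^2$-pairing, the uniform two-sided comparisons coming from $g_0\leq\mathfrak{a}g_s$ and $g_s\leq\nu g_1$, Mosco convergence of the forms $Q_s$ to $Q_0$ tested against $\overline{\partial}^{t,s}\phi$ for $\phi\in\Omega^{m,1}_c(A)$, asymptotic compactness via the reference metric $g_1$ and Rellich on the compact manifold, and finally the Kuwae--Shioya theorem. The only (harmless) deviation is in the recovery sequence: you approximate through the core $\Omega^{m,0}_c(A)$ and diagonalize, whereas the paper takes the constant sequence $\omega_n:=\omega$ and verifies $Q_{s_n}(\omega,\omega)\rightarrow Q_0(\omega,\omega)$ directly by dominated convergence.
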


Since we have the convergence of the eigenvalues it is natural to investigate if there is convergence of more sophisticated ``spectral objects''. This task is tackled in the second main result of this paper where the limit behavior of the corresponding heat operators is studied. More precisely let $e^{-t\Delta_{\overline{\partial},m,0,\mathrm{abs}}}:L^2\Omega^{m,0}(A,h|_A)\rightarrow L^2\Omega^{m,0}(A,h|_A)$ and $e^{-t\Delta_{\overline{\partial},m,0,s}}:L^2\Omega^{m,0}(M,g_s)\rightarrow L^2\Omega^{m,0}(M,g_s)$  be the heat operators associated to \eqref{mimi} and \eqref{gurugu}, respectively. These are all trace-class operators. When $s\in (0,1]$ it is again a classical result of elliptic theory on compact manifolds whereas for $e^{-t\Delta_{\overline{\partial},m,0,\mathrm{abs}}}$ it is proved in \cite[Cor 4.2]{FBei}. By the fact that $L^2\Omega^{m,0}(A,h|_A)=L^2\Omega^{m,0}(M,g_s)$ for each $s\in (0,1]$, see \eqref{stesso}, we can look at $e^{-t\Delta_{\overline{\partial},m,0,s}}:L^2\Omega^{m,0}(M,g_s)\rightarrow L^2\Omega^{m,0}(M,g_s)$ as a family of trace-class operators acting on a fixed Hilbert space. It is therefore natural to investigate the limit behavior of 
$e^{-t\Delta_{\overline{\partial},m,0,s}}:L^2\Omega^{m,0}(M,g_s)\rightarrow L^2\Omega^{m,0}(M,g_s)$ with respect to the trace-class norm wondering in particular if $e^{-t\Delta_{\overline{\partial},m,0,s}}$ converges to $e^{-t\Delta_{\overline{\partial},m,0,\mathrm{abs}}}$. This is the goal of our second main result that indeed shows  that $e^{-t\Delta_{\overline{\partial},m,0,s}}$ converges to $e^{-t\Delta_{\overline{\partial},m,0,\mathrm{abs}}}$ as $s\rightarrow 0$ with respect to the trace-class norm. More precisely
\begin{teo}
\label{guanciale}
Let $t_0\in (0,\infty)$ be arbitrarily fixed. Then $$\lim_{s\rightarrow0}\sup_{t\in [t_0,\infty)}\Tr|e^{-t\Delta_{\overline{\partial},m,0,s}}-e^{-t\Delta_{\overline{\partial},m,0,\mathrm{abs}}}|=0.$$
 Equivalently $e^{-t\Delta_{\overline{\partial},m,0,s}}$ converges to $e^{-t\Delta_{\overline{\partial},m,0,\mathrm{abs}}}$  as $s\rightarrow 0$ with respect to the trace-class norm and uniformly on $[t_0,\infty)$.
\end{teo}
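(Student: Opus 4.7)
Plan: The strategy combines the spectral data of Theorem~\ref{specspec} with the standard criterion for trace-norm convergence of positive trace-class operators, then upgrades pointwise-in-$t$ to uniform-in-$t\in[t_0,\infty)$ convergence via a semigroup decomposition.

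For each fixed $t>0$, I would apply the classical fact that on a fixed Hilbert space a sequence of positive self-adjoint trace-class operators $A_n$ converges in trace norm to a trace-class $A$ if and only if $A_n\to A$ in the strong operator topology and $\Tr(A_n)\to\Tr(A)$. Strong operator convergence $e^{-t\Delta_{\overline{\partial},m,0,s}}\to e^{-t\Delta_{\overline{\partial},m,0,\abs}}$ is immediate from Theorem~\ref{specspec}: given $s_n\to 0$, extract the subsequence $z_n$ along which $\lambda_k(z_n)\to\lambda_k(0)$ and $\eta_k(z_n)\to\eta_k(0)$ in $L^2$; then for any $u$ the spectral expansion $e^{-t\Delta_{z_n}}u=\sum_k e^{-t\lambda_k(z_n)}\langle u,\eta_k(z_n)\rangle\eta_k(z_n)$ converges termwise and is dominated by the Parseval majorant $|\langle u,\eta_k(z_n)\rangle|^2$. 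For the trace convergence $\sum_k e^{-t\lambda_k(s)}\to\sum_k e^{-t\lambda_k(0)}$, termwise convergence again comes from Theorem~\ref{specspec}, but a uniform summable majorant in $k$ must be produced.

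The hypothesis $g_0\le\mathfrak{a}g_s$ should supply this majorant. The $L^2$-norm on $(m,0)$-forms is metric-independent, so the Hilbert space is fixed; furthermore a pointwise computation on $A$ gives $|\overline{\partial} u|^2_{g_s}\,dvol_{g_s}\le\mathfrak{a}|\overline{\partial} u|^2_{g_0}\,dvol_{g_0}$, yielding the quadratic-form domination $Q_s(u)=\|\overline{\partial} u\|^2_{g_s}\le\mathfrak{a}Q_0(u)$ on the domain of $Q_0$. Under the parabolicity hypothesis one has $\mathrm{dom}(Q_0)\subseteq\mathrm{dom}(Q_s)$, so min-max yields $\lambda_k(s)\le\mathfrak{a}\lambda_k(0)$. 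Combined with the full spectral convergence of Theorem~\ref{specspec} and the growth of $\lambda_k(0)$, this should deliver the uniform tail estimate $\sum_{k>N}e^{-t\lambda_k(s)}\to 0$ as $N\to\infty$, uniformly in small $s$. This uniform summability is the main technical obstacle, since the bound $\lambda_k(s)\le\mathfrak{a}\lambda_k(0)$ runs in the opposite direction to what a naive majorant argument would want, and some additional input -- for instance a Weyl-type asymptotic for $\Delta_{\overline{\partial},m,0,\abs}$ or a refinement of the convergence in Theorem~\ref{specspec} to a control of counting functions -- is needed.

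Finally, to pass from pointwise-in-$t$ to uniform-in-$t\in[t_0,\infty)$, I would use the decomposition
\begin{equation*}
e^{-t\Delta_{\overline{\partial},m,0,s}}-e^{-t\Delta_{\overline{\partial},m,0,\abs}}=(e^{-t_0\Delta_{\overline{\partial},m,0,s}}-e^{-t_0\Delta_{\overline{\partial},m,0,\abs}})e^{-(t-t_0)\Delta_{\overline{\partial},m,0,s}}+e^{-t_0\Delta_{\overline{\partial},m,0,\abs}}(e^{-(t-t_0)\Delta_{\overline{\partial},m,0,s}}-e^{-(t-t_0)\Delta_{\overline{\partial},m,0,\abs}}).
\end{equation*}
Taking trace norms with the H\"older estimate $\|AB\|_1\le\|A\|_1\|B\|_{\op}$ and the contraction bound $\|e^{-r\Delta}\|_{\op}\le 1$, the first summand is bounded by $\|e^{-t_0\Delta_{\overline{\partial},m,0,s}}-e^{-t_0\Delta_{\overline{\partial},m,0,\abs}}\|_1$, independent of $t$. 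The second reduces to uniform-in-$r\ge 0$ operator-norm convergence of $e^{-r\Delta_{\overline{\partial},m,0,s}}-e^{-r\Delta_{\overline{\partial},m,0,\abs}}$, which can be obtained by splitting off the finite-dimensional kernels (where the eigenprojections converge in norm by Theorem~\ref{specspec}) and noting that on the orthogonal complement the eigenvalues are uniformly bounded away from zero, producing exponential decay in $r$.
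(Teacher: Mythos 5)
Your overall architecture --- isolate a finite block of low eigenvalues and show the tail of the trace is small uniformly in $s$ --- is the same as the paper's, and your reduction of trace-norm convergence at fixed $t$ to strong convergence plus convergence of traces is a legitimate fact for positive trace-class operators. However, the step you yourself flag as the ``main technical obstacle'' is precisely the crux of the whole theorem, and it is left unresolved: you need a lower bound on $\lambda_k(s)$ uniform in $s$, whereas the hypothesis $g_0\leq\mathfrak{a}g_s$ only yields the upper bound $\lambda_k(s)\leq\mathfrak{a}\lambda_k(0)$, which, as you correctly observe, points the wrong way for dominating $e^{-t\lambda_k(s)}$. The missing input is not a Weyl asymptotic for the limit operator nor a refinement of Th.~\ref{specspec}; it is the comparison of $g_s$ with the fixed smooth metric $g_1$ in the \emph{other} direction, which is available for free from the smoothness of the family on the compact space $M\times[0,1]$. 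Writing $g_s=g_1(F_s\cdot,\cdot)$, continuity and compactness give $\nu:=\max_{s\in[0,1]}\||F_s|_{g_1}\|_{L^{\infty}(M)}<\infty$ (Prop.~\ref{patolax}), i.e.\ $g_s\leq\nu g_1$ for every $s$. On $(m,1)$-forms this yields $\|\omega\|^2_{L^2\Omega^{m,1}(A,g_1|_A)}\leq\nu\|\omega\|^2_{L^2\Omega^{m,1}(A,g_s|_A)}$, see \eqref{fiacco}, while the $L^2$-norm on $(m,0)$-forms is metric-independent; combined with the domain inclusion $\mathcal{D}(Q_{s})\subseteq\mathcal{D}(Q_{1})$ and $Q_1\leq\nu Q_s$ (Prop.~\ref{pakistan}, third point) and the min-max principle (\cite{FBei} Th.~4.2), this gives $\nu\lambda_k(s)\geq\lambda_k(1)$ for all $s\in[0,1]$ and all $k$, which is \eqref{minmax}. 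Then $e^{-t\lambda_k(s)}\leq e^{-(t/\nu)\lambda_k(1)}$ is the uniform summable majorant you are missing, and the tail estimate \eqref{carabrone} follows at once from the heat trace bound for the smooth compact metric $g_1$.

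Your passage from pointwise-in-$t$ to uniform-in-$t$ convergence also has a gap in the second summand of the semigroup decomposition: you need $\sup_{r\geq0}\|e^{-r\Delta_{\overline{\partial},m,0,s}}-e^{-r\Delta_{\overline{\partial},m,0,\mathrm{abs}}}\|_{\op}\rightarrow0$ as $s\rightarrow0$. Splitting off the kernel is legitimate (the kernels in fact coincide for all $s$, Prop.~\ref{nunu}) and a uniform spectral gap handles large $r$, but on a bounded interval of $r$ you still need operator-norm (not merely strong) convergence of the semigroups, uniformly in $r$, and establishing that again requires simultaneous control of the whole spectrum --- essentially the same bound $\nu\lambda_k(s)\geq\lambda_k(1)$ --- so the decomposition buys nothing over a direct attack. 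The paper instead estimates $\Tr|e^{-t\Delta_{\overline{\partial},m,0,s}}-e^{-t\Delta_{\overline{\partial},m,0,\mathrm{abs}}}|$ directly: after discarding the tail via \eqref{carabrone}, the finitely many remaining terms are controlled on $[t_0,t_1]$ by the $1$-Lipschitz bound $|e^{-t\lambda_k(s)}-e^{-t\lambda_k(0)}|\leq t_1|\lambda_k(s)-\lambda_k(0)|$ together with the eigenform convergence of Th.~\ref{spectralth}, and on $[t_1,\infty)$ by the exponential decay coming once more from \eqref{minmax}, with the zero modes cancelling exactly thanks to Prop.~\ref{nunu}. I would recommend reorganizing your proof around the uniform two-sided comparison with $g_1$ rather than around the operator-theoretic convergence criterion.
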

We stress on the fact that our results  require neither assumptions on the dimension of $M$ nor  restrictions on the curvature of $g_s$. Moreover we do not need to impose any particular asymptotic to $h$ near $Z$.\\
Now we continue this introduction by describing how the paper is sort out. The first section contains the background material. In the second section we recall some results of functional analysis that play a key role in the proof of Th. \ref{specspec}. In particular we recall the notion of Mosco convergence, introduced originally in \cite{Mosco} and later generalized in \cite{KuSh}, as we found this machinery very suitable to prove Th. \ref{specspec}. The third  section is devoted to the main results of this paper. Besides the  proofs of Th. \ref{specspec} and Th. \ref{guanciale}  it contains further results and applications. In particular  a converge theorem for the heat kernels  of the family  $e^{-t\Delta_{\overline{\partial},m,0,s}}$ to the heat kernel of  $e^{-t\Delta_{\overline{\partial},m,0,\mathrm{abs}}}$ is derived, see Th. \ref{HilSCon}, and moreover some applications to the corresponding family of zeta functions are given, see Th.  \ref{Zeta}. The fourth and last section contains some examples and applications.\\
Finally we conclude this introduction with the following remark. The reader may wonder why we are concerned only with the Hodge-Kodaira Laplacian acting on the canonical bundle. Besides the well known importance played by the canonical bundle in complex geometry there is another, more technical, reason. To our best knowledge, without requiring restrictive condition either on the dimension of $X$ or on $\sing(X)$, there are only two cases where the Hodge-Kodaira Laplacian acting on the regular part of a compact Hermitian complex space  is known to have  a self-adjoint extension with entirely discrete spectrum: the scalar case, first proved in \cite{LT} and later generalized in \cite{FraB}, and the case of the canonical bundle \cite{FBei}. This paper is devoted to the latter case.\\

\noindent\textbf{Acknowledgements}. This paper was mainly written while the author was a postdoc at the Mathematics Department of the University of Padova. He wishes to thank that institution for financial support. Moreover the author wishes to thank the anonymous referee for helpful comments.

\section{Background material}

This section is devoted to the background material. In the first part we recall some basic notions on closed extensions of differential operators whereas the second part is concerned with some properties of Hermitian metrics. 
Let  $(M,J,g)$ be a complex Hermitian manifold of real dimension $2m$.  As usual with $\Lambda^{a,b}(M)$ we denote the bundle of $(a,b)$-forms, that is $\Lambda^a(T^{1,0,*}M)\otimes \Lambda^b(T^{0,1,*}M)$ and by $\Omega^{a,b}(M)$, $\Omega^{a,b}_c(M)$ we denote respectively the space of sections, sections with compact support,  of $\Lambda^{a,b}(M)$. On the bundle $\Lambda^{a,b}(M)$ we consider the Hermitian metric induced by $g$ and  we  label it by $g^*_{a,b}$. With $L^2\Omega^{a,b}(M,g)$ we denote the space of measurable $(a,b)$-forms $\eta$ such that $\int_Mg^*_{a,b}(\eta,\eta)\dvol_g<\infty$ where $\dvol_g$ is the volume form induced by $g$. This is a Hilbert space  whose inner product is given by $$\langle\eta,\omega\rangle_{L^2\Omega^{a,b}(M,g)}=\int_Mg^*_{a,b}(\eta,\omega)\dvol_g$$ for any $\eta,\omega\in L^2\Omega^{a,b}(M,g)$. The Dolbeault operator acting on $(a,b)$-forms is labeled by $\overline{\partial}_{a,b}:\Omega^{a,b}(M)\rightarrow \Omega^{a,b+1}(M)$  while $\overline{\partial}_{a,b}^t:\Omega^{a,b+1}(M)\rightarrow \Omega^{a,b}(M)$ denotes the formal adjoint of $\overline{\partial}_{a,b}:\Omega^{a,b}(M)\rightarrow \Omega^{a,b+1}(M)$ with respect to $g$. We look at $\overline{\partial}_{a,b}:L^2\Omega^{a,b}(M,g)\rightarrow L^2\Omega^{a,b+1}(M,g)$ as an unbounded and densely defined operator defined on $\Omega_c^{a,b}(M)$ and we denote by $\overline{\partial}_{a,b,\max/\min}:L^2\Omega^{a,b}(M,g)\rightarrow L^2\Omega^{a,b+1}(M,g)$  its maximal and minimal closed extension, respectively. We recall that the maximal  closed extension is defined in the distributional sense: $\omega \in \mathcal{D}(\overline{\partial}_{a,b,\max})$ and $\overline{\partial}_{a,b,\max}\omega=\eta\in L^2\Omega^{a,b+1}(M,g)$ if $\langle\omega,\overline{\partial}_{a,b}^t\phi\rangle_{L^2\Omega^{a,b}(M,g)}=\langle\eta,\phi\rangle_{L^2\Omega^{a,b+1}(M,g)}$ for every $\phi\in \Omega^{a,b+1}_c(M)$. The minimal closed extension  is defined as the graph closure of $\overline{\partial}_{a,b}$, that is $\omega \in \mathcal{D}(\overline{\partial}_{a,b,\min})$ and $\overline{\partial}_{a,b,\min}\omega=\eta\in L^2\Omega^{a,b+1}(M,g)$ if there exists $\{\phi_k\}_{k\in \mathbb{N}}\subset \Omega^{a,b}_c(M)$ such that $\phi\rightarrow \omega$ and $\overline{\partial}_{a,b}\phi\rightarrow \eta$ as $n\rightarrow \infty$ in $L^2\Omega^{a,b}(M,g)$ and $L^2\Omega^{a,b+1}(M,g)$, respectively. In analogous way we can define $\overline{\partial}_{a,b,\max/\min}^t:L^2\Omega^{a,b+1}(M,g)\rightarrow L^2\Omega^{a,b}(M,g)$, that is the maximal/minimal closed extension of $\overline{\partial}_{a,b}^t:\Omega^{a,b+1}_c(M)\rightarrow \Omega^{a,b}_c(M)$, respectively. It is easy to check that $\overline{\partial}_{a,b,\max/\min}^t=\overline{\partial}_{a,b,\min/\max}^*$, that is $\overline{\partial}_{a,b,\max}^t:L^2\Omega^{a,b+1}(M,g)\rightarrow L^2\Omega^{a,b}(M,g)$ is the adjoint of $\overline{\partial}_{a,b,\min}:L^2\Omega^{a,b}(M,g)\rightarrow L^2\Omega^{a,b+1}(M,g)$ whereas $\overline{\partial}_{a,b+1,\min}^t:L^2\Omega^{a,b+1}(M,g)\rightarrow L^2\Omega^{a,b}(M,g)$ is the adjoint of $\overline{\partial}_{a,b,\max}:L^2\Omega^{a,b}(M,g)\rightarrow L^2\Omega^{a,b+1}(M,g)$. Consider now the Hodge-Kodaira Laplacian  $$\Delta_{\overline{\partial},a,b}:\Omega^{a,b}_c(M)\rightarrow \Omega^{a,b}_c(M),\ \Delta_{\overline{\partial},a,b}:=\overline{\partial}_{a,b-1}\circ\overline{\partial}^t_{a,b-1}+\overline{\partial}_{a,b}^t\circ \overline{\partial}_{a,b}.$$
 We recall the definition of  two important self-adjoint extensions of $\Delta_{\overline{\partial},a,b}$:
\begin{equation}
\label{asdf}
\overline{\partial}_{a,b-1,\max}\circ \overline{\partial}_{a,b-1,\min}^t+\overline{\partial}_{a,b,\min}^t\circ \overline{\partial}_{a,b,\max}:L^2\Omega^{a,b}(M,g)\rightarrow L^2\Omega^{a,b}(M,g)
\end{equation} 
and 
\begin{equation}
\label{buio}
\overline{\partial}_{a,b-1,\min}\circ \overline{\partial}_{a,b-1,\max}^t+\overline{\partial}_{a,b,\max}^t\circ \overline{\partial}_{a,b,\min}:L^2\Omega^{a,b}(M,g)\rightarrow L^2\Omega^{a,b}(M,g)
\end{equation}
called respectively the absolute and the relative extension. The operator \eqref{asdf}, the absolute extension, is denoted with  $\Delta_{\overline{\partial},a,b,\abs}$ and its domain is defined as $$\mathcal{D}(\Delta_{\overline{\partial},a,b,\abs})=\left\{\omega\in \mathcal{D}(\overline{\partial}_{a,b,\max})\cap \mathcal{D}(\overline{\partial}_{a,b-1,\min}^t):\overline{\partial}_{a,b,\max}\omega \in \mathcal{D}(\overline{\partial}^t_{a,b,\min})\ \text{and}\ \overline{\partial}_{a,b-1,\min}^t\omega \in \mathcal{D}(\overline{\partial}_{a,b-1,\max})\right\}.$$
The operator \eqref{buio}, the relative extension,  is denoted with  $\Delta_{\overline{\partial},a,b,\rel}$ and its domain is defined as $$\mathcal{D}(\Delta_{\overline{\partial},a,b,\rel})=\left\{\omega\in \mathcal{D}(\overline{\partial}_{a,b,\min})\cap \mathcal{D}(\overline{\partial}_{a,b-1,\max}^t):\overline{\partial}_{a,b,\min}\omega \in \mathcal{D}(\overline{\partial}^t_{a,b,\max})\ \text{and}\ \overline{\partial}_{a,b-1,\max}^t\omega \in \mathcal{D}(\overline{\partial}_{a,b-1,\min})\right\}.$$ This concludes the first part of this introduction. For more details  we refer to \cite{FraBei} and the reference therein. Now we recall some background material concerning Hermitian metrics. These properties are certainly well known to the experts. However it is not easy to find them in the literature. Therefore we preferred to write them down believing that this could be helpful for the unfamiliar reader. The proof are omitted because they lie on elementary arguments of linear algebra. Let $(M,J)$ be a complex manifold of complex dimension $m$ and let $g$ and $h$ be Hermitian metrics on $M$. Then there exists $F\in C^{\infty}(M,\mathrm{End}(TM))$ such that $h(\cdot,\cdot)=g(F\cdot,\cdot)$. It is immediate to verify that $F$ and $J$ commute. For any $p\in M$ consider $F_p:T_pM\rightarrow T_pM$ and $J_p:T_pM\rightarrow T_pM$. As $F_p\circ J_p=J_p\circ F_p$ every eigenspace of $F_p$ is preserved by $J_p$ and therefore it has even dimension. This tells us that the eigenvalues of $F_p$ are given by $\{\lambda_1(p),\lambda_1(p),\lambda_2(p),\lambda_2(p),...,\lambda_m(p),\lambda_m(p)\}$ with $0<\lambda_1(p)\leq\lambda_2(p)\leq...\leq \lambda_m(p)$. Moreover if  $E_p$ is an eigenspace of $F_p:T_pM\rightarrow T_pM$ of dimension $2k$ then, for any $k$-tuple of linearly independent eigenvectors $v_1,...,v_k\in E_p$, the set $\{v_1,J_pv_1,...,v_k,J_pv_k\}$ is a base for $E_p$. Let $F_{\mathbb{C}}\in C^{\infty}(M,\mathrm{End}(TM\otimes \mathbb{C}))$ be the $\mathbb{C}$-linear endomorphism induced by $F$ on the complexified tangent bundle. Then the eigenvalues of $F_{\mathbb{C},p}$ are still $\{\lambda_1(p),\lambda_1(p),\lambda_2(p),\lambda_2(p),...,\lambda_m(p),\lambda_m(p)\}$ with corresponding eigenspaces obtained by complexification of the eigenspaces of $F_p:T_pM\rightarrow T_pM$. Consider an arbitrary eigenspace $E_p$ of $F_p$. Then $E_{\mathbb{C},p}:=E_p\otimes \mathbb{C}$ splits as $E_{\mathbb{C},p}=E_{\mathbb{C},p}^{1,0}\oplus E_{\mathbb{C},p}^{0,1}$ with $E_{\mathbb{C},p}^{1,0}=T^{1,0}_pM\cap E_{\mathbb{C},p}$ and $E_{\mathbb{C},p}^{0,1}=T^{0,1}_pM\cap E_{\mathbb{C},p}$. Moreover it is easy to check that both $T^{1,0}M$ and $T^{0,1}M$ are preserved by $F_{\mathbb{C}}$. If we define $F^{1,0}_{\mathbb{C}}:=F_{\mathbb{C}}|_{T^{1,0}M}$ and $F^{0,1}_{\mathbb{C}}:=F_{\mathbb{C}}|_{T^{0,1}M}$ then, for any $p\in M$, the eigenvalues of $F_{\mathbb{C},p}^{1,0}$ are $\{\lambda_1(p),\lambda_2(p),...,\lambda_{m-1}(p),\lambda_m(p)\}$ with  eigenspaces given by the $(1,0)$-part of the complexification of the corresponding eigenspaces of $F_p$. In particular if $E_p$ is any eigenspace of $F_p$ of real dimension $2k$ with a base of eigenvectors given by  $\{v_1,J_pv_1,...,v_k,J_pv_k\}$
then $E_{\mathbb{C},p}^{1,0}$ becomes a complex $k$-dimensional eigenspace of $F_{\mathbb{C},p}^{1,0}$  with a  base of  eigenvectors given by  $\{v_1-iJ_pv_1,...,v_k-iJ_pv_k\}$. Analogously the eigenvalues of $F_{\mathbb{C},p}^{0,1}$ are $\{\lambda_1(p),\lambda_2(p),...,\lambda_{m-1}(p),\lambda_m(p)\}$ with  eigenspaces given by the $(0,1)$-part of the complexification of the corresponding eigenspaces of $F_p$. In particular if $E_p$ is any eigenspace of $F_p$ of real dimension $2k$ with a base of eigenvectors given by  $\{v_1,J_pv_1,...,v_k,J_pv_k\}$ then $E_{\mathbb{C},p}^{0,1}$ becomes a complex $k$-dimensional eigenspace of $F_{\mathbb{C},p}^{0,1}$  with a base of  eigenvectors given by  $\{v_1+iJ_pv_1,...,v_k+iJ_pv_k\}$. As a first consequence we can deduce that: $$\det(F_p)=\det(F_{\mathbb{C},p})=\det(F_{\mathbb{C},p}^{1,0})\det(F_{\mathbb{C},p}^{0,1})=(\det(F_{\mathbb{C},p}^{1,0}))^2=(\det(F_{\mathbb{C},p}^{0,1}))^2.$$
Let now $g^*$ and $h^*$ be the Hermitian  metrics induced by $g$ and $h$ on $T^*M$, respectively. It is easy to verify that  $h^*(\cdot,\cdot)=g^*((F^{-1})^t\cdot,\cdot)$ where $(F^{-1})^t$ is the transpose of $F^{-1}$, that is the endomorphism of $T^*M$ induced by $F^{-1}$. Let us define $G\in C^{\infty}(M,\mathrm{End}(T^*M))$ as $G:=(F^{-1})^t$. Then the eigenvalues of $G_p$ are $\{\frac{1}{\lambda_1(p)},\frac{1}{\lambda_1(p)},...,\frac{1}{\lambda_m(p)},\frac{1}{\lambda_m(p)}\}$. Likewise the case of the tangent bundle, with self-explanatory notation, we introduce $G_{\mathbb{C}}$, $G_{\mathbb{C}}^{1,0}$ and $G_{\mathbb{C}}^{0,1}$ acting on $T^*M\otimes \mathbb{C}$,  $T^{1,0,*}M$ and $T^{0,1,*}M$, respectively. The eigenvalues of  both $G_{\mathbb{C},p}^{1,0}$ and $G_{\mathbb{C},p}^{0,1}$ are $\{\frac{1}{\lambda_1(p)},\frac{1}{\lambda_2(p)},...,\frac{1}{\lambda_{m-1}(p)},\frac{1}{\lambda_m(p)}\}$. In particular we have $$\det(G_{\mathbb{C},p}^{1,0})=(\det(F_{\mathbb{C},p}^{1,0}))^{-1}\ \text{and}\  \det(G_{\mathbb{C},p}^{0,1})=(\det(F_{\mathbb{C},p}^{0,1}))^{-1}.$$ Let us now label with $g_{\mathbb{C}}$ and $h_{\mathbb{C}}$ the Hermitian metrics on $TM\otimes \mathbb{C}$ induced by $g$ and $h$, respectively. We recall that for any $p\in M$, $u,v\in T_pM$ and $\alpha,\beta\in \mathbb{C}$ we have $g_{\mathbb{C}}(u\otimes \alpha,v\otimes \beta)=\alpha\overline{\beta}g(u,v)$ and $h_{\mathbb{C}}(u\otimes \alpha,v\otimes \beta)=\alpha\overline{\beta}h(u,v)$. Let $h^*_{\mathbb{C}}$, $h^*_{a,b}$, $g^*_{\mathbb{C}}$ and $g_{a,b}^*$ be the Hermitian metrics on $T^*M\otimes \mathbb{C}$ and $\Lambda^{a,b}(M)$ induced by $h_{\mathbb{C}}$ and $g_{\mathbb{C}}$, respectively. It is easy to verify that $h^*_{a,b}=h^*_{a,0}\otimes h^*_{0,b}$, $g^*_{a,b}=g^*_{a,0}\otimes g^*_{0,b}$ and that $h^*_{a,0}(\cdot,\cdot)=g_{a,0}^*(G_{\mathbb{C}}^{a,0}\cdot,\cdot)$, $h^*_{0,b}(\cdot,\cdot)=g_{0,b}^*(G_{\mathbb{C}}^{0,b}\cdot,\cdot)$, $h^*_{a,b}(\cdot, \cdot)=g^*_{a,b}(G_{\mathbb{C}}^{a,0}\otimes G_{\mathbb{C}}^{0,b}\cdot,\cdot)$ where $G_{\mathbb{C}}^{0,b}\in C^{\infty}(M,\mathrm{End}(\Lambda^{0,b}(M)))$ and $G_{\mathbb{C}}^{a,0}\in C^{\infty}(M,\mathrm{End}(\Lambda^{a,0}(M)))$ are the endomorphisms induced in the natural way by $G^{0,1}_{\mathbb{C}}$ and $G^{1,0}_{\mathbb{C}}$, respectively.  Let $\omega\in \Omega^{0,b}_c(M)$. Then for the $L^2$-inner product we have 
\begin{align}
\nonumber & \langle\omega,\omega\rangle_{L^2\Omega^{0,b}(M,h)}=\int_Mh^*_{0,b}(\omega,\omega)\dvol_h=\int_Mg^*_{0,b}(G^{0,b}_{\mathbb{C}}\omega,\omega)\sqrt{\det(F)}\dvol_g\leq \int_M|G^{0,b}_{\mathbb{C}}|_{g^*_{0,b}}g^*_{0,b}(\omega,\omega)\sqrt{\det(F)}\dvol_g
\end{align}
where $|G^{0,b}_{\mathbb{C}}|_{g^*_{0,b}}:M\rightarrow \mathbb{R}$ is the function that assigns to each $p\in M$ the pointwise operator norm of $G^{0,b}_{\mathbb{C},p}:\Lambda^{0,b}_p(M)\rightarrow \Lambda^{0,b}_p(M)$ with respect to $g_{0,b}^*$, that is 
\begin{equation}
\label{pointnorm}
|G^{0,b}_{\mathbb{C}}|_{g^*_{0,b}}^2(p)=\sup_{0\neq v\in \Lambda^{0,b}_p(M)}\frac{g^*_{0,b}(G^{0,b}_{\mathbb{C}}v,G^{0,b}_{\mathbb{C}}v)}{g^*_{0,b}(v,v)}.
\end{equation}
In particular, if $|G^{0,b}_{\mathbb{C}}|_{g^*_{0,b}}\sqrt{\det(F)}\in L^{\infty}(M)$ then we have 
\begin{align}
\nonumber & \langle\omega,\omega\rangle_{L^2\Omega^{0,b}(M,h)}=\int_Mh^*_{0,b}(\omega,\omega)\dvol_h=\int_Mg^*_{0,b}(G^{0,b}_{\mathbb{C}}\omega,\omega)\sqrt{\det(F)}\dvol_g\leq \int_M|G^{0,b}_{\mathbb{C}}|_{g^*_{0,b}}g^*_{0,b}(\omega,\omega)\sqrt{\det(F)}\dvol_g\leq \\
\nonumber & \||G^{0,b}_{\mathbb{C}}|_{g^*_{0,b}}\sqrt{\det(F)}\|_{L^{\infty}(M)}\int_Mg^*_{0,b}(\omega,\omega)\dvol_g=\||G^{0,b}_{\mathbb{C}}|_{g^*_{0,b}}\sqrt{\det(F)}\|_{L^{\infty}(M)}\langle\omega,\omega\rangle_{L^2\Omega^{0,b}(M,g)}.
\end{align}
Consider now the case $(m,0)$. Let $\xi,\chi\in \Omega^{m,0}_c(M)$. Then we have 
\begin{align}
\label{equality} & \langle\xi,\chi\rangle_{L^2\Omega^{m,0}(M,h)}=\int_Mh^*_{m,0}(\xi,\chi)\dvol_h=\int_Mg^*_{m,0}(\det(G_{\mathbb{C}}^{1,0})\xi,\chi)\sqrt{\det(F)}\dvol_g=\\
\nonumber & \int_Mg^*_{m,0}(\xi,\chi)\dvol_g=\langle\xi,\chi\rangle_{L^2\Omega^{m,0}(M,g)}.
\end{align}
Hence we can conclude that we have an equality of Hilbert spaces $L^2\Omega^{m,0}(M,h)=L^2\Omega^{m,0}(M,g)$. Finally consider a form $\psi\in \Omega^{m,b}_c(M)$ with $b>0$. Then for the $L^2$-inner product we have 
\begin{align}
\label{ibra3} 
& \langle\psi,\psi\rangle_{L^2\Omega^{m,b}(M,h)}=\int_M h_{m,b}^*(\psi,\psi)\dvol_h= \int_M g_{m,b}^*(G^{m,0}_{\mathbb{C}}\otimes G^{0,b}_{\mathbb{C}}\psi,\psi)\sqrt{\det(F)}\dvol_g=\\
& \nonumber \int_M g_{m,b}^*(\det(G^{1,0}_{\mathbb{C}})\otimes G^{0,b}_{\mathbb{C}}\psi,\psi)\sqrt{\det(F)}\dvol_g= \int_M g_{m,b}^*( \id\otimes G^{0,b}_{\mathbb{C}}\psi,\psi)\dvol_g=\\
& \nonumber \int_M g_{m,0}^*\otimes g^*_{0,b}( \id\otimes G^{0,b}_{\mathbb{C}}\psi,\psi)\dvol_g\leq \int_M |G^{0,b}_{\mathbb{C}}|_{g^*_{0,b}}g_{m,0}^*\otimes g^*_{0,b}(\psi,\psi)\dvol_g=\int_M |G^{0,b}_{\mathbb{C}}|_{g^*_{0,b}} g^*_{m,b}(\psi,\psi)\dvol_g.
\end{align}

Thus  we can conclude that whenever $|G^{0,b}_{\mathbb{C}}|_{g^*_{0,b}}\in L^{\infty}(M)$, then 
\begin{equation}
\label{miumiu2}
\langle\psi,\psi\rangle_{L^2\Omega^{m,b}(M,h)}\leq \||G^{0,b}_{\mathbb{C}}|_{g^*_{0,b}}\|_{L^{\infty}(M)}\langle\psi,\psi\rangle_{L^2\Omega^{m,b}(M,g)}
\end{equation}
whereas if $g^*_{0,b}(G^{0,b}_{\mathbb{C}}\cdot,\cdot)\geq cg^*_{0,b}(\cdot,\cdot)$ for some positive  $c\in \mathbb{R}$,  then  $g_{m,0}^*\otimes g^*_{0,b}( \id\otimes G^{0,b}_{\mathbb{C}}\psi,\psi)\geq cg_{m,0}^*\otimes g^*_{0,b}( \psi,\psi)$ and therefore
\begin{equation}
\label{bibi2}
\langle\psi,\psi\rangle_{L^2\Omega^{m,b}(M,h)}\geq c\langle\psi,\psi\rangle_{L^2\Omega^{m,b}(M,g)}.
\end{equation}

\section{Functional analytic prerequisites}
In this section we recall briefly  some functional analytic tools that will be used later on. All the material is taken from \cite{KuSh}. We refer to it for an in-depth treatment.\\ Let $\{H_n\}_{n\in \mathbb{N}}$ be a sequence of infinite dimensional separable complex Hilbert spaces. Let $H$ be another infinite dimensional separable complex Hilbert space.  Let us label by $\langle \cdot ,\ \cdot \rangle_{H_n}$, $\|\cdot\|_{H_n}$, $\langle \cdot ,\ \cdot \rangle_{H}$ and $\|\cdot\|_{H}$  the corresponding inner products and norms. Let $\mathcal{C}\subseteq H$ be a dense subset. Assume that for every $n\in \mathbb{N}$ there exists a linear map $\Phi_n:\mathcal{C}\rightarrow H_n$. We will say that $H_n$ converges to $H$ as $n\rightarrow \infty$ if and only if
\begin{equation}
\label{limitHilbert}
\lim_{n\rightarrow\infty}\|\Phi_nu\|_{H_n}=\|u\|_{H}
\end{equation}
for any $u\in \mathcal{C}$.\\

\noindent \textbf{Assumption:} In the next definitions and propositions we will always assume that the sequence $\{H_n\}_{n\in \mathbb{N}}$ {\em converges} to $H$.

\begin{defi}
\label{strong}
Let $u\in H$ and let $\{u_n\}_{n\in \mathbb{N}}$ be a sequence such that $u_n\in H_n$ for each $n\in \mathbb{N}$. We say that $u_n$ strongly converges to $u$ as $n\rightarrow \infty$ if there exists a net $\{v_{\beta}\}_{\beta\in \mathcal{B}}\subset \mathcal{C}$ tending to $u$ in $H$ such that
\begin{equation}
\label{stronglimit}
\lim_{\beta} \limsup_{n\rightarrow\infty}\|\Phi_n v_{\beta}-u_n\|_{H_n}=0
\end{equation}
\end{defi}

We note that for any arbitrarily fixed $u\in \mathcal{C}$ the sequence $\{\Phi_nu\}_{n\in \mathbb{N}}$ strongly converges to $u$. This is an immediate consequence of \eqref{limitHilbert} and Def. \ref{strong}.

\begin{defi}
\label{weak}
Let $u\in H$ and let $\{u_n\}_{n\in \mathbb{N}}$ be a sequence such that $u_n\in H_n$ for each $n\in \mathbb{N}$. We say that $u_n$ weakly converges to $u$ as $n\rightarrow \infty$ if
\begin{equation}
\label{weaklimit}
\lim_{n\rightarrow \infty} \langle u_n,w_n\rangle_{H_n}=\langle u,w\rangle_{H}
\end{equation}
for any  $w\in H$ and any sequence $\{w_n\}_{n\in \mathbb{N}}$, $w_n\in H_n$, strongly convergent to $w$.
\end{defi}

\begin{prop}
\label{bounded}
Let $\{u_n\}_{n\in \mathbb{N}}$ be a sequence such that $u_n\in H_n$ for each $n\in \mathbb{N}$. Assume that there exists a positive real number $c$ such that $\|u_n\|_{H_n}\leq c$ for every $n\in \mathbb{N}$. Then there exists a subsequence  $\{u_m\}_{m\in \mathbb{N}}\subset \{u_n\}_{n\in \mathbb{N}}$, $u_m\in H_m$, weakly convergent to some element $u\in H$.
\end{prop}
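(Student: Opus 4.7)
The plan is to adapt the classical weak-compactness argument (via Banach--Alaoglu / Riesz representation) to the varying-Hilbert-space setting, using Cantor diagonalization against a countable dense subset.

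First I would pick, using separability of $H$ and density of $\mathcal{C}$, a countable set $\{v_k\}_{k\in\mathbb{N}}\subset\mathcal{C}$ that is dense in $H$. For each fixed $k$, Cauchy--Schwarz in $H_n$ gives
\[
|\langle u_n,\Phi_n v_k\rangle_{H_n}|\leq \|u_n\|_{H_n}\,\|\Phi_n v_k\|_{H_n}\leq c\,\|\Phi_n v_k\|_{H_n},
\]
and by \eqref{limitHilbert} the right-hand side converges to $c\,\|v_k\|_H$, so the scalar sequence $\{\langle u_n,\Phi_n v_k\rangle_{H_n}\}_n$ is bounded in $\mathbb{C}$. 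Cantor's diagonal argument then produces a subsequence $\{u_m\}\subset\{u_n\}$ such that $\ell_k:=\lim_{m\to\infty}\langle u_m,\Phi_m v_k\rangle_{H_m}$ exists for every $k\in\mathbb{N}$.

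Next I would extend the assignment $v_k\mapsto \ell_k$ to a bounded conjugate-linear functional $L$ on $H$. Linearity of the $\Phi_m$ and of the inner products, together with the uniform bound $|\langle u_m,\Phi_m(v_k-v_{k'})\rangle_{H_m}|\leq c\,\|\Phi_m(v_k-v_{k'})\|_{H_m}\to c\,\|v_k-v_{k'}\|_H$, show that the map $v_k\mapsto \ell_k$ is conjugate-linear on the $\mathbb{Q}+i\mathbb{Q}$-span of $\{v_k\}$ and uniformly continuous with constant $c$ when $H$ is equipped with its norm; hence it extends uniquely to a bounded conjugate-linear functional $L$ on $H$ with $\|L\|\leq c$. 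Running the same approximation argument inside the variable-space inner products, I also obtain $\lim_m\langle u_m,\Phi_m v\rangle_{H_m}=L(v)$ for every $v\in\mathcal{C}$, not just for $v=v_k$. By the Riesz representation theorem, there is $u\in H$ with $L(v)=\langle u,v\rangle_H$ for all $v\in H$.

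Finally I would verify that $u_m$ converges weakly to $u$ in the sense of Definition \ref{weak}. Fix $w\in H$ and a sequence $w_m\in H_m$ strongly convergent to $w$; by Definition \ref{strong} there is a net $\{v_\beta\}_{\beta\in\mathcal{B}}\subset\mathcal{C}$ with $v_\beta\to w$ in $H$ and $\lim_\beta\limsup_{m\to\infty}\|\Phi_m v_\beta-w_m\|_{H_m}=0$. Write
\[
\langle u_m,w_m\rangle_{H_m}-\langle u,w\rangle_H
=\langle u_m,w_m-\Phi_m v_\beta\rangle_{H_m}+\bigl(\langle u_m,\Phi_m v_\beta\rangle_{H_m}-L(v_\beta)\bigr)+\bigl(L(v_\beta)-L(w)\bigr).
\]
The first term is bounded in modulus by $c\,\|\Phi_m v_\beta-w_m\|_{H_m}$, whose $\limsup$ in $m$ tends to $0$ along $\beta$; the second term tends to $0$ as $m\to\infty$ for each fixed $\beta$ by the construction of $L$; the third term tends to $0$ along $\beta$ by continuity of $L$ on $H$. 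Passing to $\limsup$ in $m$ and then to the net limit in $\beta$ yields $\langle u_m,w_m\rangle_{H_m}\to\langle u,w\rangle_H$, which is precisely weak convergence.

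The main obstacle I anticipate is handling the net-versus-sequence technicality in the last step: one must carry the two limits (in $\beta$ and in $m$) in the correct order, which is possible precisely because the bound on the first term is a $\limsup_m$ controlled by $\beta$ alone, while the second term is controlled by $m$ for each $\beta$ separately. Once this is arranged the classical Banach--Alaoglu reasoning goes through without modification.
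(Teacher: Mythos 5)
Your argument is correct and complete: the diagonalization against a countable dense subset of $\mathcal{C}$, the extension to a bounded conjugate-linear functional via the uniform bound $c$ and \eqref{limitHilbert}, the Riesz representation, and the three-term splitting against the net $\{v_\beta\}$ in the final verification all go through, and the order of the limits in $m$ and $\beta$ is handled properly. The paper itself offers no proof beyond citing Lemma 2.2 of \cite{KuSh}, and your argument is essentially the standard one given there, so nothing further is needed.
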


\begin{proof}
See \cite{KuSh} Lemma 2.2.
\end{proof}

\begin{prop}
\label{wibounded}
Let $\{u_n\}_{n\in \mathbb{N}}$,  $u_n\in H_n$, be a sequence  weakly convergent to some element $u\in H$. Then there exists a positive real number $\ell$ such that 
\begin{equation}
\label{lerume}
\sup_{n\in \mathbb{N}} \|u_n\|_{H_n}\leq \ell \quad\quad\quad\ and\ \quad\quad\quad \|u\|_{H}\leq \liminf_{n\rightarrow \infty} \|u_n\|_{H_n}.
\end{equation}
Moreover $\{u_n\}_{n\in \mathbb{N}}$ converges strongly to $u$ if and only if $$\lim_{n\rightarrow \infty}\|u_n\|_{H_n}=\|u\|_{H}.$$
\end{prop}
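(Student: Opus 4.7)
My plan is to decompose the proposition into its three assertions and prove them sequentially: the norm lower semicontinuity, the uniform boundedness, and the norm characterization of strong convergence.

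\textbf{Lower semicontinuity.} First I would establish $\|u\|_H \le \liminf_{n\to\infty}\|u_n\|_{H_n}$. By the remark immediately after Def.\ \ref{strong}, for every $v\in \mathcal{C}$ the sequence $\{\Phi_n v\}_{n\in\mathbb{N}}$ is strongly convergent to $v$; hence the defining relation \eqref{weaklimit} of weak convergence yields $\langle u_n,\Phi_n v\rangle_{H_n}\to \langle u,v\rangle_H$. Combining the Cauchy--Schwarz inequality with \eqref{limitHilbert} gives
\[
|\langle u,v\rangle_H|=\lim_{n\to\infty}|\langle u_n,\Phi_n v\rangle_{H_n}|\le\liminf_{n\to\infty}\|u_n\|_{H_n}\cdot\lim_{n\to\infty}\|\Phi_n v\|_{H_n}=\|v\|_H\,\liminf_{n\to\infty}\|u_n\|_{H_n}.
\]
Density of $\mathcal{C}$ in $H$ allows choosing $v\in\mathcal{C}$ arbitrarily close to $u$ in $H$; letting $v\to u$ and using continuity of the inner product produces the desired inequality.

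\textbf{Uniform boundedness.} Next I would prove $\sup_n\|u_n\|_{H_n}<\infty$ by contradiction combined with Prop.\ \ref{bounded}. If some subsequence satisfies $\|u_{n_k}\|_{H_{n_k}}\to \infty$, one normalizes to $v_k:=u_{n_k}/\|u_{n_k}\|_{H_{n_k}}$; this is a unit-norm sequence, so Prop.\ \ref{bounded} produces a further weakly convergent sub-subsequence with limit $v\in H$. For any $w\in\mathcal{C}$ the strongly convergent test $\Phi_n w\to w$, together with the hypothesis $\langle u_n,\Phi_n w\rangle_{H_n}\to\langle u,w\rangle_H$, forces $\langle v,w\rangle_H=\lim_k\langle u_{n_k},\Phi_{n_k}w\rangle_{H_{n_k}}/\|u_{n_k}\|_{H_{n_k}}=0$, so $v=0$ by density of $\mathcal{C}$. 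A Banach--Steinhaus style argument on the pointwise-bounded family of functionals $w\mapsto \langle v_k,\Phi_{n_k}w\rangle_{H_{n_k}}$ on $\mathcal{C}$, contrasted with the unit normalization $\|v_k\|_{H_{n_k}}=1$, then yields the contradiction.

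\textbf{Strong convergence characterization.} For the "only if" direction I would take the net $\{v_\beta\}\subset \mathcal{C}$ provided by Def.\ \ref{strong}: the triangle inequality for $\|u_n\|_{H_n}$ combined with $\|\Phi_n v_\beta\|_{H_n}\to \|v_\beta\|_H$ from \eqref{limitHilbert} and with $\lim_\beta\limsup_n\|\Phi_n v_\beta-u_n\|_{H_n}=0$ sandwiches $\|u_n\|_{H_n}$ between quantities both tending to $\|u\|_H$. For the "if" direction, given weak convergence plus $\|u_n\|_{H_n}\to\|u\|_H$, pick by density a net $v_\beta\in\mathcal{C}$ with $v_\beta\to u$ in $H$ and expand
\[
\|u_n-\Phi_n v_\beta\|_{H_n}^2 \;=\; \|u_n\|_{H_n}^2 - 2\R\langle u_n,\Phi_n v_\beta\rangle_{H_n} + \|\Phi_n v_\beta\|_{H_n}^2.
\]
Passing $n\to\infty$ (using the hypothesis, weak convergence applied to the strongly convergent $\Phi_n v_\beta\to v_\beta$, and \eqref{limitHilbert}) yields $\limsup_n\|u_n-\Phi_n v_\beta\|_{H_n}^2=\|u-v_\beta\|_H^2$; letting $\beta$ vary produces the net required by Def.\ \ref{strong} and establishes strong convergence.

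The main obstacle is the uniform boundedness step: the maps $\Phi_n$ are defined only on the dense subset $\mathcal{C}\subset H$ and are not assumed a priori to be uniformly bounded in the $H$-norm, so one cannot directly apply Banach--Steinhaus on $H$. The argument must instead be carried out through the test class $\mathcal{C}$ via the pointwise normalization \eqref{limitHilbert}, exploiting that the scaled sequence $v_k$ must weakly converge to zero while retaining unit norm.
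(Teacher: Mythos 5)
The paper offers no proof of its own here --- it simply cites \cite{KuSh}, Lemma 2.3 --- so your argument must stand alone. Your lower semicontinuity step and your norm characterization of strong convergence are both sound: pairing $u_n$ with $\Phi_n v$ for $v\in\mathcal{C}$ and letting $v\to u$ gives $\|u\|_H\le\liminf_n\|u_n\|_{H_n}$, and the expansion of $\|u_n-\Phi_n v_\beta\|_{H_n}^2$ combined with \eqref{limitHilbert} and Def.~\ref{weak} correctly yields the equivalence in both directions.

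The uniform boundedness step, however, contains a genuine gap. After normalizing $v_k:=u_{n_k}/\|u_{n_k}\|_{H_{n_k}}$ you obtain a unit-norm sequence converging weakly to $0$, and you assert that a Banach--Steinhaus argument ``contrasted with the unit normalization'' yields a contradiction. It does not: a unit-norm sequence converging weakly to zero is entirely unremarkable (an orthonormal sequence in a fixed Hilbert space already does this), and the functionals $w\mapsto\langle v_k,\Phi_{n_k}w\rangle_{H_{n_k}}$ on $\mathcal{C}$ are pointwise bounded automatically, with operator norms that are not bounded below by $\|v_k\|_{H_{n_k}}$ because $\Phi_{n_k}(\mathcal{C})$ need not be dense in $H_{n_k}$. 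The idea you are missing is that Def.~\ref{weak} quantifies over \emph{all} strongly convergent test sequences $\{w_n\}$, not merely those of the form $\Phi_n w$; one must test against a rescaling of $u_n$ itself. Concretely, if $\|u_{n_k}\|_{H_{n_k}}\to\infty$ along a subsequence, put $w_{n_k}:=u_{n_k}/\|u_{n_k}\|_{H_{n_k}}^{3/2}$ and $w_n:=0$ for $n$ outside the subsequence. Then $\|w_n\|_{H_n}\to0$, so $\{w_n\}$ converges strongly to $0$ (take the constant net $v_\beta=0$ in Def.~\ref{strong}), and weak convergence forces $\langle u_n,w_n\rangle_{H_n}\to0$; yet $\langle u_{n_k},w_{n_k}\rangle_{H_{n_k}}=\|u_{n_k}\|_{H_{n_k}}^{1/2}\to\infty$. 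This contradiction is what actually establishes the first half of \eqref{lerume}.
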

\begin{proof}
See \cite{KuSh} Lemma 2.3.
\end{proof}

We have now the following remark. Consider the case of a constant sequence of infinite dimensional separable complex Hilbert spaces $\{H_n\}_{n\in \mathbb{N}}$, that is for each $n\in \mathbb{N}$ $H_n=H$, $\mathcal{C}=H$ and $\Phi_n:\mathcal{C}\rightarrow H_n$ is nothing but the identity $\id:H\rightarrow H$. Then Def. \ref{strong} and Def. \ref{weak} coincide with ordinary notions of convergence in $H$ and weak convergence in $H$. Indeed let $\{v_n\}\subset H$ be a sequence converging to some $v\in H$. Then by taking any constant net $\{v_{\beta}\}_{\beta\in B}\subset H$, $v_{\beta}:=v$ as a net in $H$ converging to $v$ we have $$\lim_{\beta}\limsup_{n\rightarrow \infty}\|\Phi_nv_{\beta}-v_n\|_{H_n}=\limsup_{n\rightarrow \infty}\|v-v_n\|_H=0.$$ Therefore $v_n\rightarrow v$ strongly in the sense of Def. \ref{strong}. Conversely let us assume that for some net $\{v_{\beta}\}_{\beta\in B}\subset H$ tending to $v$ in $H$ we have $$\lim_{\beta}\limsup_{n\rightarrow \infty}\|\Phi_nv_{\beta}-v_n\|_{H_n}=0.$$
Given any $\beta\in B$ we have $\|v-v_n\|_H\leq \|v-v_{\beta}\|_{H}+\|v_{\beta}-v_n\|_{H}$. Therefore for every $\beta\in B$ $$\limsup_{n\rightarrow \infty}\|v-v_n\|_H\leq \|v-v_{\beta}\|_{H}+\limsup_{n\rightarrow \infty}\|v_{\beta}-v_n\|_{H}$$ and finally 
$$\limsup_{n\rightarrow \infty}\|v-v_n\|_H\leq \lim_{\beta} \|v-v_{\beta}\|_{H}+\lim_{\beta}\limsup_{n\rightarrow \infty}\|v_{\beta}-v_n\|_{H}=0.$$ Therefore $v_n\rightarrow v$ in $H$ and thus we showed that Def. \ref{strong}  coincides with ordinary notion of convergence in $H$. Clearly this in turn implies immediately that Def. \ref{weak} coincides with the standard definition of weak convergence in $H$.\\

We recall now that a  quadratic form over a complex  Hilbert space $H$ is a sesquilinear form $Q:\mathcal{D}(Q)\times \mathcal{D}(Q)\rightarrow \mathbb{C}$, where $\mathcal{D}(Q)\subset H$ is a (not necessarily) dense linear subspace. Any quadratic form $Q$ in this paper is assumed to be nonnegative and Hermitian, i.e., $u\mapsto Q(u,v)$ is linear for any fixed $v\in \mathcal{D}(Q)$, $Q(u,v)=\overline{Q(v,u)}$, and $Q(u,u)\geq 0$ for any $u,v\in \mathcal{D}(Q)$. Clearly $Q_H(u,v):=\langle u,v\rangle_H+Q(u,v)$, $u,v\in \mathcal{D}(Q)$ becomes an inner product on $\mathcal{D}(Q)$. The form $Q$ is said to be {\em closed} if and only if $(\mathcal{D}(Q),Q_H)$ is a Hilbert space. Finally let us introduce $\overline{\mathbb{R}}:=\mathbb{R}\cup \{\infty\}$ and the functional $\overline{Q}:H\rightarrow \overline{\mathbb{R}}$ defined by 
$$
\overline{Q}(u)= \left\{
\begin{array}{ll}
Q(u,u) & u\in \mathcal{D}(Q)\\
\infty & u\in H\setminus \mathcal{D}(Q)
\end{array}
\right.
$$

The next definition, which is taken from \cite{KuSh}, extends to the case of a sequence of Hilbert spaces the notion of Mosco-convergence, originally formulated in \cite{Mosco} in the setting of a fixed Hilbert space.

\begin{defi}
\label{Mosco}
Consider a sequence of closed quadratic forms $\{Q_n\}_{n\in \mathbb{N}}$ such that $\mathcal{D}(Q_n)\subset H_n$ for any $n\in \mathbb{N}$. Let $Q$ be a closed quadratic form on $H$. We say that $\{Q_n\}_{n\in \mathbb{N}}$ Mosco-converges to $Q$ if: 
\begin{enumerate}
\item for each sequence $\{u_n\}_{n\in \mathbb{N}}$, $u_n\in H_n$, weakly convergent to some $u\in H$ we have $$\overline{Q}(u)\leq \liminf_{n\rightarrow \infty}\overline{Q}_n(u,u)$$
\item  for each $u\in H$ there exists a sequence $\{u_n\}_{n\in \mathbb{N}}$, $u_n\in H_n$, strongly convergent to $u$ such that $$\overline{Q}(u)=\lim_{n\rightarrow \infty} \overline{Q}_n(u).$$
\end{enumerate}
\end{defi}

\begin{defi}
Consider a sequence of closed quadratic forms $\{Q_n\}_{n\in \mathbb{N}}$ such that $\mathcal{D}(Q_n)\subset H_n$ for each $n\in \mathbb{N}$. The sequence is said to be  asymptotically compact if for any sequence $\{u_n\}_{n\in\mathbb{N}}$ with $u_n\in H_n$ and $$\limsup_{n\rightarrow \infty}\bigl(\|u_n\|_{H_n}+\overline{Q}_n(u)\bigr)<\infty$$ there exists a subsequence $\{u_m\}_{m\in \mathbb{N}}$, $u_m\in H_m$, that converges strongly to some $u\in H$.
\end{defi}

\begin{defi}
Consider a sequence of closed quadratic forms $\{Q_n\}_{n\in \mathbb{N}}$ such that $\mathcal{D}(Q_n)\subset H_n$ for each $n\in \mathbb{N}$. Let $Q$ be a closed quadratic form on $H$. We say that $\{Q_n\}_{n\in\mathbb{N}}$  compactly converges  to $Q$ if: 
\begin{enumerate}
\item $\{Q_n\}_{n\in \mathbb{N}}$ Mosco-converges to $Q$,
\item  $\{Q_n\}_{n\in \mathbb{N}}$ is asymptotically compact.
\end{enumerate}
\end{defi}
Consider now an unbounded, non-negative and densely defined self-adjoint operator $A:H\rightarrow H$. Let $Q_A$ be the closed quadratic form associated to $A$. For the general construction we refer to \cite{MaMa} pag. 377. Here we only recall that if $A=B^*\circ B$ with $\mathcal{D}(A)=\{u\in \mathcal{D}(B)\ \text{such that}\ Bu\in \mathcal{D}(B^*)\}$, where $B:H\rightarrow K$ is a closed and densely defined operator acting between $H$ and another separable Hilbert space $K$ and $B^*:K\rightarrow H$ is its adjoint, then $\mathcal{D}(Q_A)=\mathcal{D}(B)$ and $Q_A(u,v)=\langle Bu,Bv\rangle_K$ for any $u,v\in \mathcal{D}(Q_A)$. We have now the following important result.
\begin{teo}
\label{mainweapon}
Let $\{A_n\}_{n\in \mathbb{N}}$ be a sequence of unbounded, non-negative and densely defined self-adjoint operators $A_n:H_n\rightarrow H_n$. Let $A:H\rightarrow H$ be an unbounded, non-negative and densely defined self-adjoint operator. Assume that
\begin{itemize}
\item $A_n:H_n\rightarrow H_n$ has entirely discrete spectrum for each $n\in \mathbb{N}$,
\item the sequence of closed quadratic form $\{Q_{A_n}\}_{n\in \mathbb{N}}$ compactly converges to $Q_A$.
\end{itemize}
Then we have the following properties:
\begin{enumerate}
\item $A:H\rightarrow H$ has entirely discrete spectrum.
\item Let $0\leq \lambda_1(n)\leq \lambda_2(n)\leq ...\leq\lambda_k(n)\leq...$ be the eigenvalues of $A_n:H_n\rightarrow H_n$. Let $0\leq \lambda_1\leq \lambda_2\leq...\leq\lambda_k\leq...$ be the eigenvalues of $A:H\rightarrow H$. Then $$\lim_{n\rightarrow \infty}\lambda_k(n)=\lambda_k.$$
\item For each $n\in \mathbb{N}$ let $\{u_1(n),u_2(n),...,u_k(n),...\}$ be any orthonormal basis of $H_n$ made by eigenvectors of $A_n$ with corresponding eigenvalues $\{\lambda_1(n),\lambda_2(n),...,\lambda_k(n),...\}$. Then there exists a subsequence $\{H_m\}_{m\in \mathbb{N}}$ of $\{H_n\}_{n\in \mathbb{N}}$ and an orthonormal basis of $H$, $\{u_1,u_2,...,u_k,...\}$,  made by eigenvectors of $A$ with corresponding eigenvalues $\{\lambda_1,\lambda_2,...,\lambda_k,...\}$ such that $\{u_k(m)\}_{m\in \mathbb{N}}$ strongly converges to $u_k$ for any $k=1,2,...$ .
\end{enumerate}
\end{teo}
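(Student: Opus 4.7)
The plan is to combine the min-max principle with the Mosco and asymptotic compactness hypotheses. For each $n$, since $A_n$ has discrete spectrum,
\[
\lambda_k(n) = \inf_{V \subset \mathcal{D}(Q_{A_n}),\ \dim V = k}\ \sup_{u \in V \setminus \{0\}} \frac{Q_{A_n}(u,u)}{\|u\|_{H_n}^2},
\]
and I would define $\mu_k \in [0,\infty]$ by the analogous min-max formula applied to $Q_A$. The goal is to prove $\lim_n \lambda_k(n) = \mu_k < \infty$ for every $k$, and to construct an orthonormal basis of $H$ consisting of eigenvectors of $A$ realizing the $\mu_k$'s, along which the $u_k(n)$'s converge strongly. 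The argument would run in three steps: (i) upper bound via Mosco condition 2, (ii) lower bound plus construction of limit eigenvectors via asymptotic compactness and Mosco condition 1, and (iii) completeness/discreteness.

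\textbf{Upper bound.} Fix $k$ and an almost-optimal trial subspace $V = \mathrm{span}\{v_1,\ldots,v_k\} \subset \mathcal{D}(Q_A)$ whose supremum Rayleigh quotient is close to $\mu_k$. By Mosco condition 2, for each $j$ I can pick $v_j^n \in \mathcal{D}(Q_{A_n})$ with $v_j^n \to v_j$ strongly and $\overline{Q}_{A_n}(v_j^n) \to \overline{Q}_A(v_j)$. A short polarization argument (apply Mosco condition 2 to $v_j \pm v_l$ and Mosco condition 1 in the reverse direction) promotes this to $Q_{A_n}(v_j^n, v_l^n) \to Q_A(v_j, v_l)$; similarly $\langle v_j^n, v_l^n \rangle_{H_n} \to \langle v_j, v_l \rangle_H$. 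For $n$ large the $v_j^n$ are linearly independent, their span $V_n \subset \mathcal{D}(Q_{A_n})$ has dimension $k$, and the supremum Rayleigh quotient on $V_n$ converges to that on $V$. Feeding $V_n$ into the min-max formula yields $\limsup_n \lambda_k(n) \leq \mu_k$.

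\textbf{Compactness and limit eigenvectors.} Let $\{u_k(n)\}_k$ be an orthonormal eigenbasis of $A_n$. From the upper bound, $\|u_k(n)\|_{H_n}^2 + \overline{Q}_{A_n}(u_k(n)) = 1 + \lambda_k(n)$ is bounded in $n$ for each $k$, so asymptotic compactness together with a diagonal extraction yields a subsequence along which $u_k(n) \to u_k \in H$ strongly for every $k$ simultaneously. The inner products pass to the limit, giving $\langle u_k, u_l \rangle_H = \delta_{kl}$, so $\{u_k\}$ is orthonormal. Mosco condition 1 places $u_k \in \mathcal{D}(Q_A)$ with $\overline{Q}_A(u_k) \leq \liminf_n \lambda_k(n)$. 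To identify each $u_k$ as an eigenvector, for any $w \in \mathcal{D}(Q_A)$ pick $w_n \to w$ strongly with $\overline{Q}_{A_n}(w_n) \to \overline{Q}_A(w)$; the polarization above then gives
\[
Q_A(u_k, w) = \lim_n Q_{A_n}(u_k(n), w_n) = \lim_n \lambda_k(n)\, \langle u_k(n), w_n \rangle_{H_n} = \tilde\lambda_k \langle u_k, w \rangle_H,
\]
where $\tilde\lambda_k$ is the limit of $\lambda_k(n)$ along the extracted subsequence. Hence $u_k \in \mathcal{D}(A)$ and $A u_k = \tilde\lambda_k u_k$. Using $\mathrm{span}\{u_1,\ldots,u_k\}$ as a trial subspace in the min-max for $Q_A$ gives $\mu_k \leq \tilde\lambda_k$, and combined with step (i) we obtain $\lim_n \lambda_k(n) = \mu_k = \tilde\lambda_k$, independent of the subsequence.

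\textbf{Completeness and the main obstacle.} It remains to verify $\mu_k \to \infty$ and that $\{u_k\}$ is an orthonormal basis of $H$; together these imply that $A$ has purely discrete spectrum with exactly the eigenvalues $\{\mu_k\}$. For $\mu_k \to \infty$, I would argue by contradiction: if $\mu_k \leq M$ for all $k$, choose $n(k)\to\infty$ with $\lambda_k(n(k)) \leq M+1$ and apply asymptotic compactness to the diagonal sequence $w_k := u_k(n(k)) \in H_{n(k)}$; a strongly convergent subsequence $w_{k_j} \to \tilde w \in H$ with $\|\tilde w\|_H = 1$ must then satisfy $\langle \tilde w, u_l \rangle_H = 0$ for every $l$ (by orthogonality of $u_k(n)$ with $u_l(n)$ for $k \neq l$, passed to the limit). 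Expanding strongly convergent approximations of any $\tilde w \in \mathcal{D}(Q_A)$ into the eigenbasis of $A_n$ as $w_n = \sum_l c_l(n) u_l(n)$, observing $c_l(n) = \langle w_n, u_l(n) \rangle_{H_n} \to 0$ for each fixed $l$, and combining with the tail estimate $Q_{A_n}(w_n) \geq \lambda_{N+1}(n) \sum_{l > N} |c_l(n)|^2$ (plus $\mu_{N+1}\to\infty$) should force $\|w_n\|_{H_n}^2\to 0$, contradicting $\|\tilde w\|_H=1$ and simultaneously yielding completeness. I expect the main obstacle to lie precisely in making these tail estimates rigorous in the variable-Hilbert-space framework: the expansion $w_n = \sum_l c_l(n) u_l(n)$ lives in $H_n$, while the passage to the limit requires synchronizing strong convergence of $w_n \to \tilde w$, strong convergence of each $u_l(n) \to u_l$, and the uniformity in $l$ of the spectral gap $\lambda_{N+1}(n)$, all through the dense subset $\mathcal{C} \subset H$.
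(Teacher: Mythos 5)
The paper does not actually prove this theorem: it is imported verbatim from Kuwae--Shioya, and the proof given there runs through an equivalence between compact convergence of the forms and compact convergence of the resolvents $(A_n+1)^{-1}$, after which the spectral statements follow from perturbation theory for compact self-adjoint operators. Your direct min-max argument is therefore a genuinely different route, and its first two steps are essentially sound. One subtlety you should repair: in the identification $Q_A(u_k,w)=\lim_n Q_{A_n}(u_k(n),w_n)$ you invoke the polarization trick, but that trick requires energy convergence $\overline{Q}_{A_n}(u_k(n))\rightarrow \overline{Q}_A(u_k)$ of \emph{both} arguments, whereas at that stage Mosco condition 1 only gives the one-sided bound $\overline{Q}_A(u_k)\leq\liminf_n\lambda_k(n)=\mu_k$, so the parallelogram identity does not force the cross terms to converge. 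The fix is easy but must be said: apply Mosco condition 1 to $u_k(n)+tw_n$ for every real $t$ (and to $u_k(n)+itw_n$), expand both sides as quadratics in $t$, and compare coefficients; this yields $Q_A(u_k,w)=\mu_k\langle u_k,w\rangle_H$ for all $w\in\mathcal{D}(Q_A)$, hence $Au_k=\mu_k u_k$ and, only a posteriori, $\overline{Q}_A(u_k)=\mu_k$.

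The genuine gap is the one you flag yourself: discreteness of $\sigma(A)$ and completeness of $\{u_k\}$ are not established, and the contradiction argument you sketch conflates the two and would be unpleasant to synchronize in the variable-Hilbert-space framework. A cleaner way to close it: prove directly that the embedding $(\mathcal{D}(Q_A),Q_{A,H})\hookrightarrow H$ is compact. Given a $Q_{A,H}$-bounded sequence $\{v_j\}\subset\mathcal{D}(Q_A)$, pick for each $j$ a recovery sequence $v_j^n$ from Mosco condition 2, choose $n(j)$ so large that $\|v_j^{n(j)}\|^2_{H_{n(j)}}+\overline{Q}_{A_{n(j)}}(v_j^{n(j)})$ stays bounded while $v_j^{n(j)}$ remains close to $v_j$ in the sense of Def. \ref{strong}, and apply asymptotic compactness to the diagonal sequence; strong subconvergence of $v_j^{n(j)}$ forces $H$-subconvergence of $v_j$. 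Compactness of this embedding gives property 1 at once; then the min-max values $\mu_k$ \emph{are} the eigenvalues of $A$ listed with multiplicity, and since for each distinct eigenvalue $\mu$ of multiplicity $d$ exactly $d$ indices $k$ satisfy $\mu_k=\mu$ while the corresponding $u_k$ are $d$ orthonormal vectors in the $d$-dimensional eigenspace, $\{u_k\}$ is automatically an orthonormal basis. With these two repairs your argument closes; as written, the final step is missing rather than wrong.
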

\begin{proof}
The first property is proved in \cite{KuSh} Cor. 2.4. The other properties are proved in \cite{KuSh} Cor. 2.5.
\end{proof}

\section{Spectral convergence for degenerating Hermitian metrics}
This section contains the main results of this paper. We start by introducing the setting and the notation. Let $(M,J)$ be a compact complex manifold of complex dimension $m$. Let $p:M\times [0,1]\rightarrow M$ be the canonical projection, let $g_s$ be a measurable section of $p^*T^*M\otimes p^*T^*M\rightarrow M\times[0,1]$ and let $h$ be a smooth, positive semidefinite Hermitian product on $M$ strictly positive on $A$, with $A\subset M$ open and dense. We make the following assumptions on $g_s$, $h$ and $A$: 
\begin{enumerate}
\item $g_s|_{A\times [0,1]}\in C^{\infty}(A\times [0,1],p^*T^*A\otimes p^*T^*A)$;
\item $g_0|_A=h|_A$;
\item For each fixed $s\in (0,1]$, $g_s$ is a smooth Hermitian metric on $M$;
\item  $(A,g_1|_A)$ is parabolic;
\item There exists a positive constant $\nu$ such that on $M$  we have $$\frac{1}{\nu}h\leq g_s\leq \nu g_1$$ for each $s\in (0,1]$.
\end{enumerate}
We recall that a Riemannian manifold $(N,g)$ is said to be \emph{parabolic} if there exists a sequence of Lipschitz functions with compact support $\{\phi_k\}_{k\in \mathbb{N}}\subset \mathrm{Lip}_c(N)$ such that $\mathrm{a})$ $0\leq \phi_k\leq 1$, $\mathrm{b})$ $\phi_k\rightarrow 1$ pointwise a.e. as $k\rightarrow \infty$ and $\mathrm{c})$ $\|d_{\min}\phi_k\|_{L^2\Omega^{1}(N,g)}\rightarrow 0$ as $k\rightarrow \infty$. We refer to \cite{BeGu} and the references therein for more on this topic. Moreover we recall that two Riemannian metrics $g_1$ and $g_2$ on a manifold $M$ are said  {\em quasi-isometric} if $c^{-1}g_1\leq g_2\leq cg_1$ for some positive constant $c$.\\
Roughly speaking $g_s$ is a one-parameter family of Hermitian metrics on $M$ that on $A$ degenerates smoothly to a Hermitian pseudometric $h$  for $s\rightarrow 0$ (plus some global control required in the fifth point above). Note that however $g_s$, viewed as a section of $p^*T^*M\otimes p^*T^*M\rightarrow M\times [0,1]$, is allowed to be {\em discontinuous} at $(M\setminus A)\times \{0\}$. In particular $g_s|_{(M\setminus A)}$ might not converge to $h|_{(M\setminus A)}$ as $s\rightarrow 0$. As recalled in the introduction, a Hermitian pseudometric on $M$ is a positive semidefinite Hermitian product on $M$ strictly positive over an open and dense subset. The  {\em degeneracy locus} of $h$ is the smallest closed subset $Z\subset M$ such that $h$ is positive definite over $M\setminus Z$. Obviously $Z\subset M\setminus A$. Clearly $(A,h|_A)$ becomes an incomplete  complex manifold of finite volume. Moreover, as parabolicity is a stable property through quasi-isometries, we known that $(A,g|_A)$ is parabolic with respect to any Riemannian metric $g$ on $M$. In particular $(A,g_s|_A)$ is parabolic for any $s\in (0,1]$. For each $s\in (0,1]$ let us label by 
\begin{equation}
\label{nondeg}
\Delta_{\overline{\partial},m,0,s}:L^2\Omega^{m,0}(M,g_s)\rightarrow L^2\Omega^{m,0}(M,g_s)
\end{equation}
the unique closed (and therefore self-adjoint) extension of $\Delta_{\overline{\partial},m,0,s}:\Omega^{m,0}(M)\rightarrow \Omega^{m,0}(M)$, where the latter operator is the Hodge-Kodaira Laplacian built with respect to the Hermitian metric $g_s$ and acting on the smooth sections of the canonical bundle of $M$.
For $s=0$ let us consider
\begin{equation}
\label{deg}
\Delta_{\overline{\partial},m,0,\abs}:L^2\Omega^{m,0}(A,h|_A)\rightarrow L^2\Omega^{m,0}(A,h|_A)
\end{equation}
which is defined as $\Delta_{\overline{\partial},m,0,\abs}:=\overline{\partial}_{m,0,\max}^*\circ\overline{\partial}_{m,0,\max}$ where 
\begin{equation}
\label{maxx}
\overline{\partial}_{m,0,\max}:L^2\Omega^{m,0}(A,h|_A)\rightarrow L^2\Omega^{m,1}(A,h|_A)
\end{equation}
is the maximal extension of $\overline{\partial}_{m,0}:\Omega_c^{m,0}(A)\rightarrow \Omega_c^{m,1}(A)$, 
\begin{equation}
\label{maxxx}
\overline{\partial}_{m,0,\max}^*:L^2\Omega^{m,1}(A,h|_A)\rightarrow L^2\Omega^{m,0}(A,h|_A)
\end{equation}
is the adjoint of \eqref{maxx} and the domain of $\Delta_{\overline{\partial},m,0,\abs}$ is  $$\mathcal{D}(\Delta_{\overline{\partial},m,0,\abs})=\{\omega\in\mathcal{D}(\overline{\partial}_{m,0,\max})\ \text{such\ that}\ \overline{\partial}_{m,0,\max}\omega \in \mathcal{D}(\overline{\partial}_{m,0,\max}^*)\}.$$
Thanks to \cite{FBei} Th. 4.1 we know that \eqref{deg} has entirely discrete spectrum.
We have now all the ingredients to state the main result of this section.
\begin{teo}
\label{spectralth}
In the setting describe above. Let $0\leq\lambda_1(s)\leq\lambda_2(s)\leq...\leq \lambda_k(s)\leq...$ be the eigenvalues of \eqref{nondeg} and let $0\leq \lambda_1(0)\leq \lambda_2(0)\leq...\leq \lambda_k(0)\leq...$ be the eigenvalues of \eqref{deg}. Then 
\begin{equation}
\label{eigenlim}
\lim_{s\rightarrow 0} \lambda_k(s)=\lambda_k(0) 
\end{equation}
for each positive integer $k$. Moreover let $\{s_n\}$ be any positive sequence such that $s_n\rightarrow 0$ as $n\rightarrow \infty$ and let  $\{\eta_1(s_n),\eta_2(s_n),.$ $..,\eta_k(s_n),...\}$ be  any orthonormal basis of  $L^2\Omega^{m,0}(M,g_{s_n})$  made by eigenforms of \eqref{nondeg} with corresponding eigenvalues  $\{\lambda_1(s_n),...,\lambda_k(s_n),...\}$. Then there exists a subsequence $\{z_n\}\subset \{s_n\}$ and an orthonormal basis $\{\eta_1(0),\eta_2(0),...,\eta_k(0),...\}$ of $L^2\Omega^{m,0}(A,h|_A)$ made by eigenforms of \eqref{deg} with corresponding eigenvalues $\{\lambda_1(0),...,\lambda_k(0),...\}$ such that 
\begin{equation}
\label{eigenflim}
\lim_{n\rightarrow \infty} \eta_k(z_n)=\eta_k(0) 
\end{equation}
in $L^2\Omega^{m,0}(A,h|_A)$ for each positive integer $k$.
\end{teo}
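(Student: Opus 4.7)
The plan is to apply Theorem \ref{mainweapon} to the sequence of Hilbert spaces $H_n := L^2\Omega^{m,0}(M,g_{s_n})$ with limit space $H := L^2\Omega^{m,0}(A,g_0|_A)$, where $\{s_n\}$ is the given sequence tending to $0$. The decisive starting point is identity \eqref{equality}: for $(m,0)$-forms the $L^2$ inner product is independent of the Hermitian metric, so $H_n=H$ as Hilbert spaces (with identical inner products) for every $n$. Taking $\mathcal{C}:=\Omega^{m,0}(M)$ and $\Phi_n:=\id$, Hilbert-space convergence in the sense of Section 2 is automatic, and the strong/weak convergence there reduce to the ordinary notions in $H$. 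What remains is to prove compact convergence of the closed quadratic forms $Q_{s_n}(\eta):=\|\overline{\partial}_{m,0,s_n}\eta\|^2_{L^2\Omega^{m,1}(M,g_{s_n})}$ to $Q_0(\eta):=\|\overline{\partial}_{m,0,\max}\eta\|^2_{L^2\Omega^{m,1}(A,g_0|_A)}$, which are the forms associated to \eqref{nondeg} and \eqref{deg}.

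For the liminf inequality, suppose $\eta_n\to\eta$ weakly in $H$ with $L:=\liminf_n Q_{s_n}(\eta_n)<\infty$. For any test form $\phi\in\Omega^{m,1}_c(A)$, integration by parts on $M$ gives $\langle\overline{\partial}\eta_n,\phi\rangle_{L^2(g_{s_n})}=\langle\eta_n,\overline{\partial}^t_{s_n}\phi\rangle_H$; since $\phi$ has compact support in $A$ and $g_{s_n}\to g_0$ smoothly on $A$, the smooth form $\overline{\partial}^t_{s_n}\phi$ converges in $C^\infty(\supp\phi)$ to $\overline{\partial}^t_0\phi$, hence strongly in $H$, and weak-strong pairing yields $\lim_n\langle\overline{\partial}\eta_n,\phi\rangle_{L^2(g_{s_n})}=\langle\eta,\overline{\partial}^t_0\phi\rangle_{L^2(g_0)}$. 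From $g_0\leq\mathfrak{a}\,g_{s_n}$, \eqref{ibra3}, and the $(m,0)$-cancellation underlying \eqref{equality} one deduces the pointwise bound $g^*_{s_n,m,1}(\phi,\phi)\dvol_{g_{s_n}}\leq\mathfrak{a}\,g^*_{0,m,1}(\phi,\phi)\dvol_{g_0}$, so that dominated convergence gives $\|\phi\|_{L^2(g_{s_n})}\to\|\phi\|_{L^2(g_0)}$. Cauchy--Schwarz against $\overline{\partial}\eta_n$ and passage to the limit then yield $|\langle\eta,\overline{\partial}^t_0\phi\rangle_{L^2(g_0)}|\leq\sqrt{L}\,\|\phi\|_{L^2(g_0)}$ for every test $\phi$, forcing $\eta\in\mathcal{D}(\overline{\partial}_{m,0,\max})$ and $Q_0(\eta)\leq L$.

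For the recovery sequence I would first show that $\Omega^{m,0}_c(A)$ is a core of $Q_0$, exploiting parabolicity of $(A,g_1|_A)$ in the manner of \cite{FBei} Th.~4.1 to approximate any $\eta\in\mathcal{D}(\overline{\partial}_{m,0,\max})$ in graph norm by smooth compactly supported forms on $A$. For $\eta\in\Omega^{m,0}_c(A)\subset\Omega^{m,0}(M)$ the choice $\eta_n:=\eta$ gives $\eta_n\to\eta$ trivially and $Q_{s_n}(\eta)\to Q_0(\eta)$ by dominated convergence using the same pointwise bound on $\supp\eta$; a diagonal argument extends this to all $\eta\in\mathcal{D}(Q_0)$. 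For asymptotic compactness, smoothness of $g_s$ on the compact product $M\times[0,1]$ provides a constant $C'>0$ with $g_s\leq C'g_1$ for every $s\in[0,1]$; dualising on $(0,1)$-forms and again using the $(m,0)$-cancellation of \eqref{equality} one obtains the pointwise bound $g^*_{s,m,1}(\psi,\psi)\dvol_{g_s}\geq(C')^{-1}g^*_{1,m,1}(\psi,\psi)\dvol_{g_1}$ for every $(m,1)$-form $\psi$. Applied to $\psi=\overline{\partial}\eta_n$ this transforms the hypothesis $Q_{s_n}(\eta_n)\leq C$ into the uniform bound $\|\overline{\partial}\eta_n\|_{L^2(g_1)}^2\leq C'C$; since $g_1$ is a smooth metric on the compact manifold $M$, the inclusion $\mathcal{D}(Q_1)\hookrightarrow H$ is compact by standard elliptic theory, so a subsequence of $\{\eta_n\}$ converges strongly in $H$.

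Compact convergence established, Theorem \ref{mainweapon} simultaneously yields \eqref{eigenlim} and the desired subsequence $\{z_n\}\subset\{s_n\}$ along which the orthonormal eigenbases converge in $H$ to an orthonormal eigenbasis of \eqref{deg}. The main obstacle is the recovery-sequence step: the density of $\Omega^{m,0}_c(A)$ in $\mathcal{D}(Q_0)$ rests essentially on parabolicity combined with the $L^2$-invariance of $(m,0)$-forms under change of Hermitian metric, and it is what makes it possible to build the recovery sequence entirely out of smooth forms on $M$.
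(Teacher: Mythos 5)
Your overall strategy is exactly the paper's: reduce to Theorem \ref{mainweapon} by proving compact convergence of the quadratic forms $Q_{s_n}$ to $Q_0$, using \eqref{equality} to work on the fixed Hilbert space $L^2\Omega^{m,0}(A,g_0|_A)$. Your liminf argument (weak--strong pairing against $\overline{\partial}^{t,s_n}\phi$ for $\phi\in\Omega^{m,1}_c(A)$, then a Riesz-representation bound forcing $\eta\in\mathcal{D}(\overline{\partial}_{m,0,\max})$ with $Q_0(\eta)\leq L$) and your asymptotic-compactness argument (uniform lower bound of the $g_s$-norm of $(m,1)$-forms by the $g_1$-norm, then compactness of $(\mathcal{D}(Q_1),Q_{1,H})\hookrightarrow L^2\Omega^{m,0}(A,g_1|_A)$) are in substance Prop. \ref{www}, Prop. \ref{domain}, Prop. \ref{patolax} and Prop. \ref{asycom}, and they are correct.

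The gap is in the recovery sequence. You propose to first prove that $\Omega^{m,0}_c(A)$ is a core of $Q_0$, i.e. dense in $\mathcal{D}(\overline{\partial}_{m,0,\max})$ for the $g_0$-graph norm. That is precisely the assertion $\overline{\partial}_{m,0,\max}=\overline{\partial}_{m,0,\min}$ with respect to $g_0|_A$, which the paper never establishes and deliberately avoids: the absolute extension is defined through $\overline{\partial}_{m,0,\max}$, and \cite{FBei} is quoted for ``any closed extension'', wording that presupposes possible non-uniqueness. The parabolicity-based cutoff argument you allude to does not deliver it: writing $\overline{\partial}(\phi_k\eta)=\overline{\partial}\phi_k\wedge\eta+\phi_k\,\overline{\partial}_{m,0,\max}\eta$, the error term must tend to zero in $L^2\Omega^{m,1}(A,g_0|_A)$, and by \eqref{ibra3} the pointwise $(m,1)$-density for $g_0$ involves $G^{0,1}_{0,\mathbb{C}}$, whose eigenvalues $1/\lambda_{k,0}(p)$ blow up as $p$ approaches the degeneracy locus $Z$; parabolicity only gives $\|d_{\min}\phi_k\|_{L^2\Omega^{1}(A,g_1|_A)}\rightarrow 0$, which controls the strictly weaker $g_1$-norm and says nothing about the $g_0$-norm of $\overline{\partial}\phi_k\wedge\eta$. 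The $L^2$-invariance of $(m,0)$-forms helps only with the zeroth-order part, not with this term. Fortunately the detour is unnecessary: for every $\eta\in\mathcal{D}(Q_0)$ the constant sequence $\eta_n:=\eta$ is already a recovery sequence. Indeed $g_0\leq\mathfrak{a}g_{s_n}$ gives the pointwise domination $g^*_{s_n,m,1}(\psi,\psi)\dvol_{g_{s_n}}\leq\mathfrak{a}\,g^*_{0,m,1}(\psi,\psi)\dvol_{g_0}$ (Prop. \ref{patola}), so $\overline{\partial}_{m,0,\max}\eta\in L^2\Omega^{m,1}(A,g_{s_n}|_A)$ and $\eta\in\mathcal{D}(Q_{s_n})$ (Prop. \ref{extension}), and the dominated convergence theorem yields $Q_{s_n}(\eta,\eta)\rightarrow Q_0(\eta,\eta)$; this is the content of Prop. \ref{pakistan} and Prop. \ref{soothe}. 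With that replacement your proof closes and coincides with the paper's.
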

Some remarks to the above statement are in order. More precisely we have to explain why $\eta_k(z_n)\in L^2\Omega^{m,0}(A,h|_A)$ so that the convergence in $L^2\Omega^{m,0}(A,h|_A)$, as required in \eqref{eigenflim}, makes sense. First of all we point out that, as $(A,g|_A)$ is parabolic with respect to any Riemannian metric $g$ on $M$, we can use Th. 3.4 and Prop 3.1 in  \cite{Troya} to conclude that $M\setminus A$ has measure zero. Thus we have an equality of Hilbert spaces $$L^2\Omega^{m,0}(M,g_s)=L^2\Omega^{m,0}(A,g_s|_A)$$ for any $s\in (0,1]$. Moreover, thanks to \eqref{equality}, we know that there is an equality of Hilbert spaces $$L^2\Omega^{m,0}(A,g_s|_A)=L^2\Omega^{m,0}(A,h|_A)$$ for any $s\in [0,1]$. 
Therefore, joining together these equalities, we have 
\begin{equation}
\label{stesso}
L^2\Omega^{m,0}(M,g_s)=L^2\Omega^{m,0}(A,g_{s}|_A)= L^2\Omega^{m,0}(A,h|_A)
\end{equation}
 for any $s\in (0,1]$. Thus \eqref{eigenflim} is well posed. In order to prove Th. \ref{spectralth} we want to apply Th. \ref{mainweapon}. First we need to establish some preliminary properties. Let $F_s$ be a section of $p^*\mathrm{End}(TM)\rightarrow M\times [0,1]$ such that $g_1(F_s\cdot,\cdot)=g_s(\cdot,\cdot)$ for each $s\in (0,1]$ and  $g_1(F_0\cdot,\cdot)=h(\cdot,\cdot)$. Clearly $F_s|_{A\times [0,1]}\in C^{\infty}(A\times [0,1], p^*\mathrm{End}(TA))$, $F_1=\id$ and $F_s$ is  positive definite on $M$ for each fixed $s\in (0,1]$.  We have the following uniform family of continuous inclusions.
 
\begin{prop}
\label{patolax}
The identity $\id:\Omega_c^{m,1}(M)\rightarrow \Omega^{m,1}_c(M)$ induces a continuous inclusion $i:L^2\Omega^{m,1}(M,g_s)\hookrightarrow L^2\Omega^{m,1}(M,g_1)$ such that 
\begin{equation}
\label{fiacco}
\|\omega\|^2_{L^2\Omega^{m,1}(M,g_1)}\leq \nu\|\omega\|^2_{L^2\Omega^{m,1}(M,g_s)}
\end{equation}
for any $s\in (0,1]$ and $\omega\in L^2\Omega^{m,1}(M,g_s)$.
\end{prop}

\begin{proof}
By the assumptions we know that $g_s\leq \nu g_1$ for any $s\in (0,1]$. Therefore, arguing as in \cite[Prop. 1.8]{BePi}, we obtain immediately  that  $\nu g_s^*\geq  g_1^*$ for any $s\in (0,1]$ where $g_s^*$ and $g_1^*$ are the metrics induced by $g_s$ and $g_1$ on $T^*M$, respectively. From the latter inequality we can deduce easily the analogous inequality for the induced Hermitian metrics on $T^{0,1,*}M$, that is $\nu g_{s,0,1}^*\geq  g_{1,0,1}^*$ for any $s\in (0,1]$. As $g_s=g_1(F_s\cdot,\cdot)$ the latter inequality can be reformulated by saying that on $T^{0,1,*}M$ we have  $\nu g_{1,0,1}^*(G_{s,\mathbb{C}}^{0,1}\cdot,\cdot)\geq  g_{1,0,1}^*(\cdot,\cdot)$ for any $s\in (0,1]$.
In this way, given any $\omega\in \Omega_c^{m,1}(M)$ and $s\in (0,1]$,  by \eqref{ibra3} we have 
\begin{align}
& \nonumber \|\omega\|^2_{L^2\Omega^{m,1}(M,g_s)}=\int_Mg_{s,m,1}^*(\omega,\omega)\dvol_{g_s}=\int_Mg_{1,m,1}^*(\det(G^{1,0}_{s,\mathbb{C}})\otimes G_{s,\mathbb{C}}^{0,1}\omega,\omega)\sqrt{\det(F_s)}\dvol_{g_1}=\\
& \nonumber \int_Mg_{1,m,1}^*(\id\otimes G_{s,\mathbb{C}}^{0,1}\omega,\omega)\dvol_{g_1}=\int_Mg_{1,m,0}^*\otimes g^*_{1,0,1}(\id\otimes G_{s,\mathbb{C}}^{0,1}\omega,\omega)\dvol_{g_1}\geq\\
& \int_M\frac{1}{\nu}g_{1,m,1}^*(\omega,\omega)\dvol_{g_1}= \frac{1}{\nu}\|\omega\|^2_{L^2\Omega^{m,1}(M,g_1)}.
\end{align}
In conclusion we have shown that given any $s\in (0,1]$ and $\omega\in \Omega_c^{m,1}(M)$ we have  $$\|\omega\|^2_{L^2\Omega^{m,1}(M,g_1)}\leq \nu\|\omega\|^2_{L^2\Omega^{m,1}(M,g_s)}.$$ Now \eqref{fiacco} follow immediately.
\end{proof}

We have also the following family of uniform continuous inclusions.

\begin{prop}
\label{patola}
The identity $\id:\Omega_c^{m,1}(A)\rightarrow \Omega^{m,1}_c(A)$ induces a continuous inclusion $i:L^2\Omega^{m,1}(A,h|_A)\hookrightarrow L^2\Omega^{m,1}(A,g_s|_A)$ such that for any $\omega\in L^2\Omega^{m,1}(A,h|_A)$ and any $s\in (0,1]$ the following inequality holds true 
\begin{equation}
\label{decia}
\|\omega\|^2_{L^2\Omega^{m,1}(A,g_s|_A)}\leq \nu\|\omega\|^2_{L^2\Omega^{m,1}(A,h|_A)}
\end{equation}
\end{prop}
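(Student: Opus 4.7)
The statement is essentially a reformulation of the hypothesis $g_0\le\mathfrak{a}g_s$ at the level of the metrics induced on $(m,1)$-forms, by means of the pointwise linear-algebraic machinery already assembled in the background section.

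First I would restrict attention to $A$, where both $g_0|_A$ and $g_s|_A$ are genuine Hermitian metrics, and introduce the pointwise endomorphism $F\in\mathrm{End}(TA)$ characterized by $g_0|_A(\cdot,\cdot)=g_s|_A(F\cdot,\cdot)$. Since $g_0$ is symmetric, $F$ is self-adjoint with respect to $g_s|_A$; the standing bound $g_0\le\mathfrak{a}g_s$ then reads $g_s(Fv,v)\le\mathfrak{a}g_s(v,v)$, so at every $p\in A$ each eigenvalue of $F_p$ lies in $(0,\mathfrak{a}]$.

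Next, following the background section verbatim on $A$, define $G:=(F^{-1})^t\in\mathrm{End}(T^*A)$ together with its induced action $G^{0,1}_{\mathbb{C}}\in\mathrm{End}(\Lambda^{0,1}(A))$. By the simultaneous-diagonalization discussion recalled there, the eigenvalues of $G^{0,1}_{\mathbb{C},p}$ are the reciprocals of the eigenvalues of $F_p$ and hence lie in $[1/\mathfrak{a},\infty)$. Consequently, pointwise on $A$,
$$g^*_{s,0,1}(G^{0,1}_{\mathbb{C}}v,v)\ge\frac{1}{\mathfrak{a}}g^*_{s,0,1}(v,v).$$
This is exactly the hypothesis needed to invoke \eqref{bibi2} with $g=g_s|_A$, $h=g_0|_A$, $b=1$ and $c=1/\mathfrak{a}$; as the derivation of \eqref{bibi2} is purely pointwise, it applies equally well on the open Hermitian manifold $A$. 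The resulting inequality
$$\|\psi\|^2_{L^2\Omega^{m,1}(A,g_0|_A)}\ge\frac{1}{\mathfrak{a}}\|\psi\|^2_{L^2\Omega^{m,1}(A,g_s|_A)}$$
holds for every $\psi\in\Omega^{m,1}_c(A)$, and rearranges to \eqref{decia}. A standard density argument then promotes this bound on smooth compactly supported forms to the desired continuous inclusion $i:L^2\Omega^{m,1}(A,g_0|_A)\hookrightarrow L^2\Omega^{m,1}(A,g_s|_A)$.

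No genuine obstacle is present: the whole content is the translation of the eigenvalue bound on $TA$ into the dual bound on $T^{0,1,*}A$, which is immediate from the simultaneous diagonalization, followed by a direct appeal to \eqref{bibi2}. The only cosmetic point to check is that $\mathfrak{a}$ can indeed be taken independent of $s\in[0,1]$, which is built into the standing assumptions.
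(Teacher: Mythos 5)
Your proof is correct and follows essentially the same route as the paper: both arguments reduce \eqref{decia} to the pointwise comparison of the induced metrics on $\Lambda^{0,1}$ together with the cancellation of the $(m,0)$-determinant factor against the volume-form ratio, i.e.\ the machinery of \eqref{ibra3}--\eqref{bibi2}. The only (cosmetic) difference is that you compare $g_0|_A$ to $g_s|_A$ directly, taking $g_s$ as the reference metric and invoking \eqref{bibi2} verbatim, whereas the paper expresses both norms relative to the fixed background metric $g_1$ via the endomorphisms $G^{0,1}_{s,\mathbb{C}}$ and $G^{0,1}_{0,\mathbb{C}}$ and cites the dual inequality $\mathfrak{a}g_0^*\geq g_s^*$ from the literature.
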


\begin{proof}
By the assumptions we know that $h\leq \nu g_s$ for any $s\in (0,1]$. Therefore, arguing as in the proof of Prop. \ref{patolax}, we obtain immediately  that over $A$ we have $\nu h^*\geq  g_s^*$ for any $s\in (0,1]$ with $h^*$ and $g_s^*$  the metrics induced by $h$ and $g_s$ on $T^*A$, respectively. As before we get immediately the analogous inequality for the induced Hermitian metrics on $T^{0,1,*}A$, that is $\nu h_{0,1}^*\geq  g_{s,0,1}^*$ for any $s\in (0,1]$. As $g_s(\cdot,\cdot)=g_1(F_s\cdot,\cdot)$ and $h(\cdot,\cdot)=g_1(F_0\cdot,\cdot)$, the latter inequality amounts to saying that on $T^{0,1,*}A$ we have  $\nu g_{1,0,1}^*(G_{0,\mathbb{C}}^{0,1}\cdot,\cdot)\geq  g_{1,0,1}^*(G_{s,\mathbb{C}}^{0,1}\cdot,\cdot)$ for any $s\in (0,1]$. Let now $\omega\in \Omega^{m,1}_c(A)$ and $s\in (0,1]$. Using \eqref{ibra3} we have 
\begin{align}
& \nonumber \|\omega\|_{L^2\Omega^{m,1}(A,g_s|_A)}^2=\int_A g_{s,m,1}^*(\omega,\omega)\dvol_{g_s}=\int_A g_{1,m,1}^*(\det(G_{s,\mathbb{C}}^{1,0})\otimes G_{s,\mathbb{C}}^{0,1}\omega,\omega)\sqrt{\det(F_s)}\dvol_{g_1}=\\
& \nonumber \int_A g_{1,m,1}^*(\id\otimes G_{s,\mathbb{C}}^{0,1}\omega,\omega)\dvol_{g_1}=\int_A g^*_{1,m,0}\otimes g_{1,0,1}^*(\id \otimes G_{s,\mathbb{C}}^{0,1}\omega,\omega)\dvol_{g_1} \leq \nu\int_A g^*_{1,m,0}\otimes g_{1,0,1}^*(\id \otimes G_{0,\mathbb{C}}^{0,1}\omega,\omega)\dvol_{g_1}=\\
& \nonumber \nu\int_A g^*_{1,m,0}\otimes g_{1,0,1}^*(\det(G^{1,0}_{0,\mathbb{C}})\otimes G_{0,\mathbb{C}}^{0,1}\omega,\omega)\sqrt{\det(F_0)}\dvol_{g_1}=\nu\int_A g_{1,m,1}^*(\det(G^{1,0}_{0,\mathbb{C}})\otimes G_{0,\mathbb{C}}^{0,1}\omega,\omega)\sqrt{\det(F_0)}\dvol_{g_1}=\\
& \label{dadaumpa} \nu\int_A h_{m,1}^*(\omega,\omega)\dvol_h=\nu\|\omega\|_{L^2\Omega^{m,1}(A,h|_A)}^2.
\end{align}
In conclusion we showed that for any $s\in (0,1]$ and $\omega\in \Omega_c^{m,1}(A)$ we have 
$$\|\omega\|_{L^2\Omega^{m,1}(A,g_s|_A)}^2\leq \nu\|\omega\|_{L^2\Omega^{m,1}(A,h|_A)}^2$$
as desired.
\end{proof}

\begin{prop}
\label{soothe}
Let $\{s_n\}_{n\in \mathbb{N}}\subset (0,1]$ be any sequence such that $s_n\rightarrow 0$ as $n\rightarrow \infty$. Then:
\begin{enumerate}
\item Consider the sequence of Hilbert spaces $\{L^2\Omega^{m,0}(A,g_{s_n}|_A)\}_{n\in \mathbb{N}}$. Consider $L^2\Omega^{m,0}(A,h|_A)$, let $\mathcal{C}:=L^2\Omega^{m,0}(A,h|_A)$ and for any $n\in \mathbb{N}$, let $\Phi_n:\mathcal{C}\rightarrow L^2\Omega^{m,0}(A,g_{s_n}|_A)$ be the identity map, that is $\Phi_n(\eta)=\eta$, which is well defined thanks to \eqref{equality}. Then $\{L^2\Omega^{m,0}(A,g_{s_n}|_A)\}_{n\in \mathbb{N}}$  converges to $L^2\Omega^{m,0}(A,h|_A)$ in the sense of \eqref{limitHilbert}.
\item Consider the sequence of Hilbert spaces $\{L^2\Omega^{m,1}(A,g_{s_n}|_A)\}_{n\in \mathbb{N}}$. Consider $L^2\Omega^{m,1}(A,h|_A)$, let $\mathcal{C}:=L^2\Omega^{m,1}(A,h|_A)$ and for any $n\in \mathbb{N}$, let $\Phi_n:\mathcal{C}\rightarrow L^2\Omega^{m,1}(A,g_{s_n}|_A)$ be the identity map, that is $\Phi_n(\omega)=\omega$, which is well defined thanks to Prop. \ref{patola}. Then $\{L^2\Omega^{m,1}(A,g_{s_n}|_A)\}_{n\in \mathbb{N}}$  converges to $L^2\Omega^{m,1}(A,h|_A)$ in the sense of \eqref{limitHilbert}.
\end{enumerate}
\end{prop}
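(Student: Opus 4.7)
The task splits into verifying, for each of the two parts, that $\|\Phi_n u\|_{H_n}\to\|u\|_H$ for every $u$ in the dense subset $\mathcal{C}$ specified in the statement. The two parts are quite different in character: part (1) will turn out to be essentially tautological, while part (2) requires an application of the dominated convergence theorem built on top of the machinery established in Proposition \ref{patola}.

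For part (1), I would argue that there is nothing to check beyond an application of \eqref{equality}. Since $F_0$ is positive definite on the open dense set $A$, both $g_{s_n}|_A$ and $g_0|_A$ are genuine Hermitian metrics on the complex manifold $A$, so \eqref{equality}, applied on $A$ and then extended from $\Omega^{m,0}_c(A)$ to $L^2\Omega^{m,0}(A,g_0|_A)$ by density (using \eqref{stesso} to identify the underlying Hilbert spaces), gives
\[
\|\Phi_n u\|^2_{L^2\Omega^{m,0}(A,g_{s_n}|_A)}=\|u\|^2_{L^2\Omega^{m,0}(A,g_0|_A)}
\]
for every $n$ and every $u\in L^2\Omega^{m,0}(A,g_0|_A)$. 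Hence the required limit is trivial, and in fact an equality for every $n$.

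For part (2), fix $\omega\in L^2\Omega^{m,1}(A,g_0|_A)$. I would first use \eqref{ibra3} and the identity $\det(G_{s_n,\mathbb{C}}^{1,0})\sqrt{\det F_{s_n}}=1$ (as exploited in the proof of Proposition \ref{patola}) to rewrite
\[
\|\omega\|^2_{L^2\Omega^{m,1}(A,g_{s_n}|_A)}=\int_A g_{1,m,0}^*\otimes g_{1,0,1}^*(\id\otimes G_{s_n,\mathbb{C}}^{0,1}\omega,\omega)\,\dvol_{g_1}.
\]
Pointwise convergence of the integrand on $A$ follows from smoothness of $F_s$ in $(p,s)\in M\times[0,1]$ together with invertibility of $F_0$ on $A$, which forces $G_{s_n,\mathbb{C}}^{0,1}\to G_{0,\mathbb{C}}^{0,1}$ pointwise on $A$ and hence convergence of the integrand to $g_{1,m,0}^*\otimes g_{1,0,1}^*(\id\otimes G_{0,\mathbb{C}}^{0,1}\omega,\omega)$. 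For an integrable majorant I would extract from the chain \eqref{dadaumpa} the \emph{pointwise} inequality $\mathfrak{a}\,g_{1,0,1}^*(G_{0,\mathbb{C}}^{0,1}\cdot,\cdot)\geq g_{1,0,1}^*(G_{s,\mathbb{C}}^{0,1}\cdot,\cdot)$ on $T^{0,1,*}A$ valid for every $s\in[0,1]$, which yields
\[
g_{1,m,0}^*\otimes g_{1,0,1}^*(\id\otimes G_{s_n,\mathbb{C}}^{0,1}\omega,\omega)\leq\mathfrak{a}\,g_{1,m,0}^*\otimes g_{1,0,1}^*(\id\otimes G_{0,\mathbb{C}}^{0,1}\omega,\omega)
\]
pointwise on $A$, the right-hand side being integrable with integral equal to $\mathfrak{a}\|\omega\|^2_{L^2\Omega^{m,1}(A,g_0|_A)}<\infty$. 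Applying Lebesgue's dominated convergence theorem then gives $\|\omega\|^2_{L^2\Omega^{m,1}(A,g_{s_n}|_A)}\to\|\omega\|^2_{L^2\Omega^{m,1}(A,g_0|_A)}$.

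The step I expect to require the most care is isolating the pointwise version of the comparison between $G_{s,\mathbb{C}}^{0,1}$ and $G_{0,\mathbb{C}}^{0,1}$ from Proposition \ref{patola}, which as stated provides only the integrated bound; fortunately, its proof in \eqref{dadaumpa} produces the desired pointwise estimate en route, so no new work is needed. Once the pointwise domination and pointwise convergence are both in hand, the application of dominated convergence is standard and completes the argument.
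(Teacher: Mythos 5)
Your proposal is correct and follows essentially the same route as the paper: part (1) is disposed of via the Hilbert-space equality \eqref{equality}, and part (2) rewrites $\|\omega\|^2_{L^2\Omega^{m,1}(A,g_{s_n}|_A)}$ as $\int_A g_{1,m,0}^*\otimes g_{1,0,1}^*(\id\otimes G_{s_n,\mathbb{C}}^{0,1}\omega,\omega)\,\dvol_{g_1}$ and applies dominated convergence with exactly the majorant $\mathfrak{a}\,g_{1,m,0}^*\otimes g_{1,0,1}^*(\id\otimes G_{0,\mathbb{C}}^{0,1}\omega,\omega)$ extracted from the pointwise inequality underlying Prop. \ref{patola}. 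The paper's proof is the same argument, including the observation that pointwise convergence of the integrand comes from the smoothness of the family $G_{s,\mathbb{C}}^{0,1}$ in $(p,s)$.
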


\begin{proof}
The first statement is obvious and follows by the fact that we have an equality of Hilbert spaces $L^2\Omega^{m,0}(A,g_{s_n}|_A)=L^2\Omega^{m,0}(A,h|_A)$, for any $n\in \mathbb{N}$, see \eqref{equality}. Now we tackle the second statement. As remarked in the previous proof on $A$ we have $\nu g_{1,0,1}^*(G_{0,\mathbb{C}}^{0,1}\cdot,\cdot)\geq  g_{1,0,1}^*(G_{s,\mathbb{C}}^{0,1}\cdot,\cdot)$ for any $s\in (0,1]$ which clearly in turn implies that $\nu g^*_{1,m,0}\otimes g_{1,0,1}^*(\id\otimes G_{0,\mathbb{C}}^{0,1}\cdot,\cdot)\geq  g^*_{1,m,0}\otimes g_{1,0,1}^*(\id\otimes G_{s,\mathbb{C}}^{0,1}\cdot,\cdot)$ for any $s\in (0,1]$. Let $\omega\in L^2\Omega^{m,1}(A,h|_A)$. Thanks to Prop. \ref{patola} we know that $\omega\in L^2\Omega^{m,1}(A,g_s|_A)$ for any $s\in (0,1]$ and for the corresponding  $L^2$-norm  we have 
\begin{align}
& \label{rogi} \|\omega\|_{L^2\Omega^{m,1}(A,g_s|_A)}^2=\int_A g_{s,m,1}^*(\omega,\omega)\dvol_{g_s}=\int_A g_{1,m,1}^*(\det(G_{s,\mathbb{C}}^{1,0})\otimes G_{s,\mathbb{C}}^{0,1}\omega,\omega)\sqrt{\det(F_s)}\dvol_{g_1}=\\
& \nonumber \int_A g_{1,m,1}^*(\id\otimes G_{s,\mathbb{C}}^{0,1}\omega,\omega)\dvol_{g_1}=\int_A g^*_{1,m,0}\otimes g_{1,0,1}^*(\id \otimes G_{s,\mathbb{C}}^{0,1}\omega,\omega)\dvol_{g_1}.
\end{align}
Moreover we have seen above that  $\nu g^*_{1,m,0}\otimes g_{1,0,1}^*(\id\otimes G_{0,\mathbb{C}}^{0,1}\cdot,\cdot)\geq  g^*_{1,m,0}\otimes g_{1,0,1}^*(\id\otimes G_{s,\mathbb{C}}^{0,1}\cdot,\cdot)$ for any $s\in (0,1]$ and 
$$\int_A g^*_{1,m,0}\otimes g_{1,0,1}^*(\id \otimes G_{0,\mathbb{C}}^{0,1}\omega,\omega)\dvol_{g_1}<\infty$$
as 
$$ \int_A g^*_{1,m,0}\otimes g_{1,0,1}^*(\id \otimes G_{0,\mathbb{C}}^{0,1}\omega,\omega)\dvol_{g_1}=\int_A h^*_{m,1}( \omega,\omega)\dvol_{h}=\|\omega\|_{L^2\Omega^{m,1}(A,h|_A)}^2$$ see \eqref{dadaumpa}. Furthermore it is clear that $g^*_{1,m,0}\otimes g_{1,0,1}^*(\id\otimes G_{s,\mathbb{C}}^{0,1}\omega,\omega)\rightarrow g^*_{1,m,0}\otimes g_{1,0,1}^*(\id\otimes G_{0,\mathbb{C}}^{0,1}\omega,\omega)$ pointwise almost everywhere in $A$ as $s\rightarrow 0$ since $G_{s,\mathbb{C}}^{0,1}\in C^{\infty}(A\times [0,1],p^*\mathrm{End}(T^{0,1,*}A)$.
So we are in position to apply the Lebesgue dominate convergence theorem in \eqref{rogi} and we obtain
\begin{align}
& \nonumber \lim_{s\rightarrow 0} \|\omega\|_{L^2\Omega^{m,1}(A,g_s|_A)}^2=\lim_{s\rightarrow 0}\int_A g_{s,m,1}^*(\omega,\omega)\dvol_{g_s}=\lim_{s\rightarrow 0}\int_A g_{1,m,1}^*(\det(G_{s,\mathbb{C}}^{1,0})\otimes G_{s,\mathbb{C}}^{0,1}\omega,\omega)\sqrt{\det(F_s)}\dvol_{g_1}=\\
& \nonumber \int_A \lim_{s\rightarrow 0}g^*_{1,m,0}\otimes g_{1,0,1}^*(\id \otimes G_{s,\mathbb{C}}^{0,1}\omega,\omega)\dvol_{g_1}=\int_A g^*_{1,m,0}\otimes g_{1,0,1}^*(\id \otimes G_{0,\mathbb{C}}^{0,1}\omega,\omega)\dvol_{g_1}=\|\omega\|_{L^2\Omega^{m,1}(A,h|_A)}^2.
\end{align}
In conclusion we have shown that for any $\omega\in L^2\Omega^{m,1}(A,h|_A)$ the following equality holds true:  $$\lim_{s\rightarrow 0}\langle\omega,\omega\rangle_{L^2\Omega^{m,1}(A,g_{s}|_A)}=\langle\omega,\omega\rangle_{L^2\Omega^{m,1}(A,h|_A)}.$$ This completes the proof as the second statement of this proposition is a straightforward consequence of the above equality.
\end{proof}

We have the following immediate consequence:

\begin{cor}
\label{swim}
 Let $\omega\in L^2\Omega^{m,1}(A,h|_A)$ be arbitrarily fixed. Then the constant sequence $\{\omega_n\}_{n\in \mathbb{N}}$, $\omega_n:=\omega$, viewed as a sequence where $\omega_n\in L^2\Omega^{m,1}(A,g_{s_n}|_A)$ for any $n\in \mathbb{N}$, converges strongly in the sense of Def. \ref{strong} to $\omega$ as $n\rightarrow \infty$.
\end{cor}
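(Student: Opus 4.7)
The plan is simply to unwind Definition \ref{strong} and exhibit the constant net as the approximating net. Concretely, recall that here $H=L^2\Omega^{m,1}(A,g_0|_A)$, $H_n=L^2\Omega^{m,1}(A,g_{s_n}|_A)$, $\mathcal{C}=H$, and the linear maps $\Phi_n:\mathcal{C}\to H_n$ are the identity (which is well defined on $\mathcal{C}$ by Prop. \ref{patola} since $g_0\leq \mathfrak{a}g_{s_n}$, and which realizes Hilbert-space convergence $H_n\to H$ by Prop. \ref{soothe}).

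Given the fixed element $\omega\in H$, I take the constant net $\{v_\beta\}_{\beta\in \mathcal{B}}\subset \mathcal{C}$ defined by $v_\beta:=\omega$ for every $\beta$. Trivially $v_\beta\to \omega$ in $H$. Moreover, by construction of the sequence $\{\omega_n\}$ (namely $\omega_n=\omega$) and of $\Phi_n$ (the identity), we have
\[
\Phi_n v_\beta - \omega_n \;=\; \omega-\omega\;=\;0\quad\text{in } H_n
\]
for every $n\in \mathbb{N}$ and every $\beta\in \mathcal{B}$. Hence $\|\Phi_n v_\beta-\omega_n\|_{H_n}=0$ and therefore
\[
\lim_\beta\limsup_{n\to\infty}\|\Phi_n v_\beta-\omega_n\|_{H_n}=0,
\]
which is exactly condition \eqref{stronglimit}. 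By Definition \ref{strong} this proves that $\omega_n\to \omega$ strongly as $n\to\infty$.

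There is no substantive obstacle here: the only non-trivial input is the uniform inclusion $L^2\Omega^{m,1}(A,g_0|_A)\hookrightarrow L^2\Omega^{m,1}(A,g_{s_n}|_A)$ provided by Prop. \ref{patola}, which legitimates viewing $\omega$ as an element of each $H_n$ and hence legitimates the constant sequence $\omega_n:=\omega$. Once this is available, the corollary follows directly from the definition by choosing the constant approximating net.
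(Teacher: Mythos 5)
Your proof is correct and matches the paper's (implicit) argument: the paper leaves this as an immediate consequence of Prop.~\ref{soothe}, relying on the remark after Def.~\ref{strong} that for any $u\in\mathcal{C}$ the sequence $\{\Phi_n u\}$ strongly converges to $u$, which is exactly your constant-net computation. The one genuinely needed input, namely that $\{L^2\Omega^{m,1}(A,g_{s_n}|_A)\}$ converges to $L^2\Omega^{m,1}(A,g_0|_A)$ in the sense of \eqref{limitHilbert} so that Def.~\ref{strong} applies at all, is correctly credited to Prop.~\ref{soothe}.
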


Now, for each $s\in (0,1]$, consider  the operators 
\begin{align}
\nonumber & \overline{\partial}_{m,0,\max}:L^2\Omega^{m,0}(A,g_s|_A)\rightarrow L^2\Omega^{m,1}(A,g_s|_A)\\
\nonumber & \overline{\partial}_{m,0,\min}:L^2\Omega^{m,0}(A,g_s|_A)\rightarrow L^2\Omega^{m,1}(A,g_s|_A)\\
\label{boxuomo} & \overline{\partial}_{m,0}:L^2\Omega^{m,0}(M,g_s)\rightarrow L^2\Omega^{m,1}(M,g_s)
\end{align}
where the first two are the maximal/minimal extensions of $\overline{\partial}_{m,0}:\Omega_c^{m,0}(A)\rightarrow \Omega_c^{m,1}(A)$ and the third one is the unique $L^2$ closed extension of $\overline{\partial}_{m,0}:\Omega^{m,0}(M) \rightarrow \Omega^{m,1}(M)$. As showed in \cite{FBei} Prop. 3.2 the above three operators coincide.  In particular  $\overline{\partial}_{m,0}:\Omega_c^{m,0}(A)\rightarrow \Omega_c^{m,1}(A)$ has a unique closed extension, that we label with 
\begin{equation}
\label{urcau}
\overline{\partial}_{m,0}:L^2\Omega^{m,0}(A,g_s|_A)\rightarrow L^2\Omega^{m,1}(A,g_s|_A)
\end{equation}
 and that coincides with \eqref{boxuomo}. Let us consider now $\overline{\partial}^{t,s}_{m,0}:\Omega^{m,1}_c(A)\rightarrow \Omega^{m,0}_c(A)$, that is the formal adjoint of $\overline{\partial}_{m,0}:\Omega^{m,0}_c(A)\rightarrow \Omega^{m,1}_c(A)$ with respect to $g_s$. Analogously to the previous case also the operators
\begin{align}
\nonumber & \overline{\partial}_{m,0,\max}^{t,s}:L^2\Omega^{m,1}(A,g_s|_A)\rightarrow L^2\Omega^{m,0}(A,g_s|_A)\\
\nonumber & \overline{\partial}_{m,0,\min}^{t,s}:L^2\Omega^{m,1}(A,g_s|_A)\rightarrow L^2\Omega^{m,0}(A,g_s|_A)\\
\label{boxdonna} & \overline{\partial}_{m,0}^{*,s}:L^2\Omega^{m,1}(M,g_s)\rightarrow L^2\Omega^{m,0}(M,g_s)
\end{align}
where the first two are the maximal/minimal extensions of $\overline{\partial}_{m,0}^{t,s}:\Omega_c^{m,1}(A)\rightarrow \Omega_c^{m,0}(A)$ and the third one is the unique $L^2$ closed extension of $\overline{\partial}_{m,0}^{t,s}:\Omega^{m,1}(M)\rightarrow \Omega^{m,0}(M)$,  coincide. Therefore $\overline{\partial}_{m,0}^{t,s}:\Omega_c^{m,1}(A)\rightarrow \Omega_c^{m,0}(A)$ has a unique closed extension,  denoted by 
\begin{equation}
\label{uzz}
\overline{\partial}_{m,0}^{t,s}:L^2\Omega^{m,1}(A,g_s|_A)\rightarrow L^2\Omega^{m,0}(A,g_s|_A)
\end{equation}
that coincides with \eqref{boxdonna}.
This allows us to conclude that  the operator \eqref{nondeg} coincides with  
\begin{equation}
\label{kelvin}
\overline{\partial}_{m,0}^{t,s}\circ \overline{\partial}_{m,0}:L^2\Omega^{m,0}(A,g_s|_A)\rightarrow L^2\Omega^{m,0}(A,g_s|_A)
\end{equation}
where $\overline{\partial}_{m,0}:L^2\Omega^{m,0}(A,g_s|_A)\rightarrow L^2\Omega^{m,1}(A,g_s|_A)$ is defined in \eqref{urcau}, $\overline{\partial}_{m,0}^{t,s}:L^2\Omega^{m,1}(A,g_s|_A)\rightarrow L^2\Omega^{m,0}(A,g_s|_A)$
is defined in \eqref{uzz} and the domain of \eqref{kelvin} is $$\mathcal{D}(\overline{\partial}_{m,0}^{t,s}\circ \overline{\partial}_{m,0})=\{\omega \in \mathcal{D}(\overline{\partial}_{m,0})\ \text{such that}\ \overline{\partial}_{m,0}\omega\in \mathcal{D}(\overline{\partial}_{m,0}^{t,s})\}.$$

Using the above remarks, Prop. \ref{patola} and \eqref{equality} it is not hard to show the next property. For a complete proof we refer to \cite{FBei} Th. 4.1. 

\begin{prop}
\label{extension}
Let $\mathcal{D}(\overline{\partial}_{m,0,\max})$ be the domain of \eqref{maxx}. For each $s\in (0,1]$ let $\mathcal{D}(\overline{\partial}_{m,0})$ be the domain of \eqref{urcau}. Then we have a continuous inclusion $\mathcal{D}(\overline{\partial}_{m,0,\max})\hookrightarrow \mathcal{D}(\overline{\partial}_{m,0})$ where each domain is endowed with the corresponding graph norm. Moreover for any $\omega\in \mathcal{D}(\overline{\partial}_{m,0,\max})$ we have $\overline{\partial}_{m,0}\omega=\overline{\partial}_{m,0,\max}\omega$.
\end{prop}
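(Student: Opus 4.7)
The plan is to combine three ingredients already at hand: the identity of Hilbert spaces $L^2\Omega^{m,0}(A,g_s|_A)=L^2\Omega^{m,0}(A,g_0|_A)$ from \eqref{equality}, the continuous embedding $L^2\Omega^{m,1}(A,g_0|_A)\hookrightarrow L^2\Omega^{m,1}(A,g_s|_A)$ from Prop.~\ref{patola}, and the coincidence, discussed around \eqref{boxuomo}, between the maximal $g_s$-closure of $\overline{\partial}_{m,0}$ on $A$ and the unique closed extension $\overline{\partial}_{m,0}:L^2\Omega^{m,0}(A,g_s|_A)\to L^2\Omega^{m,1}(A,g_s|_A)$.

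Fix $\omega\in\mathcal{D}(\overline{\partial}_{m,0,\max})$ and set $\eta:=\overline{\partial}_{m,0,\max}\omega$. From \eqref{equality} I get $\omega\in L^2\Omega^{m,0}(A,g_s|_A)$ with $\|\omega\|_{L^2(g_s)}=\|\omega\|_{L^2(g_0)}$, while Prop.~\ref{patola} yields $\eta\in L^2\Omega^{m,1}(A,g_s|_A)$ with $\|\eta\|^2_{L^2(g_s)}\leq \mathfrak{a}\|\eta\|^2_{L^2(g_0)}$. The core step is to verify that $\omega$ belongs to the $g_s$-maximal domain with image $\eta$, i.e.
\begin{equation*}
\langle\omega,\overline{\partial}^{t,s}_{m,0}\phi\rangle_{L^2\Omega^{m,0}(A,g_s|_A)}=\langle\eta,\phi\rangle_{L^2\Omega^{m,1}(A,g_s|_A)}\qquad\text{for every }\phi\in\Omega^{m,1}_c(A).
\end{equation*}
Both this identity and its $s=0$ counterpart encode the single metric-independent current-level equation $\overline{\partial}_{m,0}\omega=\eta$ on $A$: on smooth test forms the pairing $\langle\omega,\overline{\partial}^{t,g}_{m,0}\phi\rangle_{L^2(g)}$ reduces, via the local integration-by-parts formula, to the canonical pairing of the distribution $\overline{\partial}_{m,0}\omega$ against $\phi$, which does not depend on the Hermitian metric used either to take the formal adjoint or to produce the volume form. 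Hence the $g_0$-identity transfers verbatim to every $g_s$, and the coincidence recalled above gives $\omega\in\mathcal{D}(\overline{\partial}_{m,0})$ with $\overline{\partial}_{m,0}\omega=\eta=\overline{\partial}_{m,0,\max}\omega$.

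Continuity of the resulting inclusion of graph-normed domains is then immediate from
\begin{equation*}
\|\omega\|^2_{L^2(g_s)}+\|\overline{\partial}_{m,0}\omega\|^2_{L^2(g_s)}=\|\omega\|^2_{L^2(g_0)}+\|\eta\|^2_{L^2(g_s)}\leq \max(1,\mathfrak{a})\bigl(\|\omega\|^2_{L^2(g_0)}+\|\overline{\partial}_{m,0,\max}\omega\|^2_{L^2(g_0)}\bigr).
\end{equation*}
The main obstacle is the metric-independence of the distributional $\overline{\partial}$-equation: once one accepts that both Hermitian formulations of the maximal extension encode the same current equation, everything else is bookkeeping. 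This is the fiddly technical point that the paper defers to \cite{FBei} Th.~4.1.
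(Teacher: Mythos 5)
Your argument is correct and uses exactly the ingredients the paper itself points to for this statement (the equality \eqref{equality}, the continuous inclusion of Prop.~\ref{patola}, and the identification of \eqref{urcau} with the $g_s$-maximal extension), the paper merely sketching this and deferring the details to \cite{FBei} Th.~4.1. Your key observation --- that the defining identity of the maximal extension is equivalent, via $\phi\mapsto\overline{\star_g\phi}$ and Stokes, to the metric-independent current equation $\overline{\partial}_{m,0}\omega=\eta$, so that only the $L^2$-membership of $\omega$ and $\eta$ depends on the metric --- is the right one and the remaining norm estimate is exactly as you state.
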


\begin{rem}
The reader might wonder why we did not denote by $\overline{\partial}_{m,0}^s$ the operator \eqref{urcau} in order to emphasize explicitly the dependence on $s$. The reason is that the operator \eqref{urcau} does not depend on $s$. Indeed $\overline{\partial}_{m,0}:\Omega_c^{m,0}(A)\rightarrow \Omega_c^{m,1}(A)$ is an intrinsic operator that does not depend on the metric. If we now consider its closure with respect to $g_s$ then, by  the fact that for any $0<s_1\leq s_2\leq 1$ the metrics $g_{s_1}$ and $g_{s_2}$ are quasi-isometric, we can deduce easily that a $(m,0)$-form $\omega\in L^2\Omega^{m,0}(A,g_{s_1})=L^2\Omega^{m,0}(A,g_{s_2})$ lies in the domain of the  unique closure of $\overline{\partial}_{m,0}:\Omega_c^{m,0}(A)\rightarrow \Omega_c^{m,1}(A)$ with respect to $g_{s_1}$ if and only if it lies in the domain of the unique closure of $\overline{\partial}_{m,0}:\Omega_c^{m,0}(A)\rightarrow \Omega_c^{m,1}(A)$ with respect to $g_{s_2}$ and  the action of $\overline{\partial}_{m,0}$ on $\omega$ with respect to $g_{s_1}$ coincides with the action of $\overline{\partial}_{m,0}$ on $\omega$ with respect to $g_{s_2}$. Thus, as long as $s\in (0,1]$, the operator \eqref{urcau} is uniquely determined.
\end{rem}

We have all the ingredients to introduce the family of quadratic forms we will work with. Let $\{s_n\}_{n\in \mathbb{N}}\subset (0,1]$ be a sequence with $s_n\rightarrow 0$ as $n\rightarrow \infty$. We  define 
\begin{equation}
\label{amaro}
\mathcal{D}(Q_{s_n}):=\mathcal{D}(\overline{\partial}_{m,0})\quad\quad \text{and}\quad\quad Q_{s_n}(\omega,\eta):=\langle\overline{\partial}_{m,0}\omega,\overline{\partial}_{m,0}\eta\rangle_{L^2\Omega^{m,1}(A,g_{s_n}|_A)}
\end{equation}
for any $\omega,\eta\in \mathcal{D}(Q_{s_n})$, where $\overline{\partial}_{m,0}:L^2\Omega^{m,0}(A,g_{s_n}|_A)\rightarrow L^2\Omega^{m,1}(A,g_{s_n}|_A)$ is defined in 
\eqref{urcau}. Clearly $Q_{s_n}$  is a closed quadratic form, that is $(\mathcal{D}(Q_{s_n}),Q_{s_n,H})$ is a Hilbert space. It is clear that the latter space is a Hilbert space as it is nothing but the domain of \eqref{urcau} endowed with its graph product. We remind that $Q_{s_n,H}(\cdot,\cdot):=\langle\cdot,\cdot\rangle_{L^2\Omega^{m,0}(A,g_{s_n}|_A)}+Q_{s_n}(\cdot,\cdot)$. Summarizing $Q_{s_n}$ is the closed quadratic form associated to the operator \eqref{nondeg}.
Finally we introduce the quadratic form  $Q_0$ defined as 
\begin{equation}
\label{amarone}
\mathcal{D}(Q_0):=\mathcal{D}(\overline{\partial}_{m,0,\max})\quad\quad \text{and}\quad\quad Q_0(\omega,\eta):=\langle\overline{\partial}_{m,0,\max}\omega,\overline{\partial}_{m,0,\max}\eta\rangle_{L^2\Omega^{m,1}(A,h|_A)}
\end{equation}
for any $\omega,\eta\in \mathcal{D}(Q_0)$, where $\overline{\partial}_{m,0,\max}:L^2\Omega^{m,0}(A,h|_A)\rightarrow L^2\Omega^{m,1}(A,h|_A)$ is defined in  \eqref{maxx}. In other words $Q_0$ is the quadratic form associated to the operator \eqref{deg} and, likewise the previous case, it is a closed quadratic form, that is 
$(\mathcal{D}(Q_{0}),Q_{0,H})$ is a Hilbert space. In the next propositions we show various properties concerning $\{(\mathcal{D}(Q_{s_n}),Q_{s_n,H})\}_{n\in \mathbb{N}}$ and $(\mathcal{D}(Q_{0}),Q_{0,H})$. With $\{s_n\}_{n\in \mathbb{N}}$ we  denote any sequence with $\{s_n\}\subset (0,1]$ such that  $s_n\rightarrow 0$ as $n\rightarrow \infty$.

\begin{prop}
\label{pakistan}
We have the following properties:
\begin{enumerate}
\item Let $\omega\in \mathcal{D}(Q_{0})$. Then for any $n\in \mathbb{N}$ $\omega\in \mathcal{D}(Q_{s_n})$ and the corresponding inclusion $i_{0,n}:(\mathcal{D}(Q_{0}),Q_{0,H})\hookrightarrow (\mathcal{D}(Q_{s_n}),Q_{s_n,H})$ is continuous. More precisely, defining $\tau:=\max\{1,\nu\}$, we have $$\|\omega\|^2_{L^2\Omega^{m,0}(A,g_{s_n}|_A)}+Q_{s_n}(\omega,\omega)\leq \tau\left(\|\omega\|^2_{L^2\Omega^{m,0}(A,h|_A)}+Q_{0}(\omega,\omega)\right)$$ for any $s_n$ and $\omega\in \mathcal{D}(Q_{0})$.
\item Let $\mathcal{C}:=\mathcal{D}(Q_{0})$ and $\Phi_n:=i_{0,n}$, that is the inclusion defined in the previous point. Then $$\{(\mathcal{D}(Q_{s_n}),Q_{s_n,H})\}_{n\in \mathbb{N}}$$ converges to $(\mathcal{D}(Q_{0}),Q_{0,H})$ as $n\rightarrow \infty$.
\item Let $n\in \mathbb{N}$ and let $\omega\in \mathcal{D}(Q_{s_n})$. Then $\omega\in \mathcal{D}(Q_{s_1})$ and the corresponding inclusion $i_{n,1}:(\mathcal{D}(Q_{s_n}),Q_{s_n,H})\hookrightarrow (\mathcal{D}(Q_{1}),Q_{1,H})$ is continuous. More precisely, with $\tau$ defined as above, we have $$\|\omega\|^2_{L^2\Omega^{m,0}(A,g_1|_A)}+Q_1(\omega,\omega)\leq \tau\left(\|\omega\|^2_{L^2\Omega^{m,0}(A,g_{s_n}|_A)}+Q_{s_n}(\omega,\omega)\right)$$ for any  $s_n$ and  $\omega\in \mathcal{D}(Q_{s_n})$.
\end{enumerate}
\end{prop}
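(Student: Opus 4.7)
The proof is essentially a bookkeeping exercise assembling results already established earlier in the paper, so the plan is to tackle the three items in order, each by reducing to one of those previously stated inequalities.

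For the first claim, let $\omega \in \mathcal{D}(Q_0) = \mathcal{D}(\overline{\partial}_{m,0,\max})$. By Prop. \ref{extension}, $\omega$ lies in $\mathcal{D}(\overline{\partial}_{m,0}) = \mathcal{D}(Q_{s_n})$ for any $s_n \in (0,1]$ and $\overline{\partial}_{m,0}\omega = \overline{\partial}_{m,0,\max}\omega$. The norm of $\omega$ is unchanged by passing from $g_0$ to $g_{s_n}$ thanks to the identity \eqref{equality}. For the quadratic-form term, Prop. \ref{patola} applied to $\overline{\partial}_{m,0,\max}\omega \in L^2\Omega^{m,1}(A,g_0|_A)$ yields
\[
Q_{s_n}(\omega,\omega) = \|\overline{\partial}_{m,0,\max}\omega\|^2_{L^2\Omega^{m,1}(A,g_{s_n}|_A)} \leq \mathfrak{a}\,\|\overline{\partial}_{m,0,\max}\omega\|^2_{L^2\Omega^{m,1}(A,g_0|_A)} = \mathfrak{a}\, Q_0(\omega,\omega).
\]
Adding the two inequalities and taking $\mathfrak{b} = \max\{1,\mathfrak{a}\}$ gives the desired bound.

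For the second claim, which is the convergence of the sequence of Hilbert spaces $(\mathcal{D}(Q_{s_n}), Q_{s_n,H})$ to $(\mathcal{D}(Q_0), Q_{0,H})$ in the sense of \eqref{limitHilbert}, it suffices to check, for any fixed $\omega \in \mathcal{D}(Q_0)$, that $\|\Phi_n\omega\|^2 = \|\omega\|^2_{L^2\Omega^{m,0}(A,g_{s_n}|_A)} + Q_{s_n}(\omega,\omega)$ converges to $\|\omega\|^2_{L^2\Omega^{m,0}(A,g_0|_A)} + Q_0(\omega,\omega)$. The $L^2\Omega^{m,0}$ summand is in fact constant in $n$ by \eqref{equality}. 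For the $\overline{\partial}$-summand, one applies Prop. \ref{soothe}(2) to $\overline{\partial}_{m,0,\max}\omega \in L^2\Omega^{m,1}(A,g_0|_A)$, obtaining
\[
\lim_{n \to \infty} \|\overline{\partial}_{m,0,\max}\omega\|^2_{L^2\Omega^{m,1}(A,g_{s_n}|_A)} = \|\overline{\partial}_{m,0,\max}\omega\|^2_{L^2\Omega^{m,1}(A,g_0|_A)} = Q_0(\omega,\omega),
\]
and again using Prop. \ref{extension} to identify $\overline{\partial}_{m,0}\omega$ with $\overline{\partial}_{m,0,\max}\omega$, this is exactly $Q_{s_n}(\omega,\omega) \to Q_0(\omega,\omega)$.

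For the third claim, we observe that the remark following Prop. \ref{extension} shows that \eqref{urcau} is independent of the choice of $s \in (0,1]$, so $\omega \in \mathcal{D}(Q_{s_n})$ implies automatically $\omega \in \mathcal{D}(Q_1)$ with the same action of $\overline{\partial}_{m,0}$. The $L^2\Omega^{m,0}$ norms again agree by \eqref{equality}, while \eqref{fiacco} from Prop. \ref{patolax} applied to $\overline{\partial}_{m,0}\omega \in L^2\Omega^{m,1}(A,g_{s_n}|_A)$ gives
\[
Q_1(\omega,\omega) = \|\overline{\partial}_{m,0}\omega\|^2_{L^2\Omega^{m,1}(A,g_1|_A)} \leq \nu \|\overline{\partial}_{m,0}\omega\|^2_{L^2\Omega^{m,1}(A,g_{s_n}|_A)} = \nu\, Q_{s_n}(\omega,\omega).
\]
Summing and absorbing constants into $\tau = \max\{1,\nu\}$ yields the stated inequality. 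None of the three steps presents a genuine obstacle; the only subtle point is the appeal to Prop. \ref{extension} in step one, which ensures that the domain of the maximal extension built with $g_0$ is really contained in the unique closed domain built with $g_{s_n}$, and to the fact, emphasized in the remark, that this latter domain is the same for every $s \in (0,1]$.
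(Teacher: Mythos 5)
Your proof is correct and follows essentially the same route as the paper's, which simply cites \eqref{equality}, Prop. \ref{patola} and Prop. \ref{extension} for item 1, Prop. \ref{soothe} for item 2, and \eqref{equality} together with Prop. \ref{patolax} for item 3. The only detail worth adding is that in item 3 the sequence $\{s_n\}\subset [0,1]$ may contain terms equal to $0$, in which case the inclusion $\mathcal{D}(Q_{s_n})\subset \mathcal{D}(Q_1)$ follows from Prop. \ref{extension} rather than from the $s$-independence of \eqref{urcau} on $(0,1]$ --- exactly the mechanism you already invoke in item 1.
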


\begin{proof}
The first point follows immediately by \eqref{equality}, Prop. \ref{patola} and Prop. \ref{extension}. The second point is a straightforward application of the first point and  Prop. \ref{soothe}. Concerning the third point we first note that $\omega\in \mathcal{D}(Q_{s_1})$ as $g_{s_n}$ and $g_{s_1}$ are quasi-isometric. Now the rest of the proof follows immediately  by \eqref{equality} and Prop. \ref{patolax}.
\end{proof}

\begin{prop}
\label{www}
Let $\phi\in \Omega_c^{m,1}(A)$ be an arbitrarily fixed $(m,1)$-form with compact support. Then $\overline{\partial}^{t,s_n}_{m,0}\phi\rightarrow \overline{\partial}^{t,0}_{m,0}\phi$ weakly in $L^2\Omega^{m,0}(A,h|_A)$ as $n\rightarrow \infty$, where $\overline{\partial}_{m,0}^{t,0}:\Omega^{m,1}_c(A)\rightarrow \Omega_c^{m,0}(A)$ is the formal adjoint of $\overline{\partial}_{m,0}:\Omega^{m,0}_c(A)\rightarrow \Omega_c^{m,1}(A)$ with respect to $h|_A$.
\end{prop}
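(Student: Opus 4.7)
The plan is to exploit the fact that $\phi$ has compact support strictly inside $A$, where the whole family $g_s$ is smooth and uniformly positive definite. This lets me transfer the $s$-dependence off the operator $\overline{\partial}^{t,s_n}_{m,0}$ onto an inner product, and then invoke dominated convergence.

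First I would test against an arbitrary $\psi\in\Omega_c^{m,0}(A)$. Using the defining property of the formal adjoint on compactly supported forms, together with \eqref{equality} (which identifies the $L^2\Omega^{m,0}$ inner products of all the metrics $g_s$, $s\in[0,1]$, on $A$), I obtain the chain of equalities
\begin{equation*}
\langle\overline{\partial}^{t,s_n}_{m,0}\phi,\psi\rangle_{L^2\Omega^{m,0}(A,g_0|_A)}
=\langle\overline{\partial}^{t,s_n}_{m,0}\phi,\psi\rangle_{L^2\Omega^{m,0}(A,g_{s_n}|_A)}
=\langle\phi,\overline{\partial}_{m,0}\psi\rangle_{L^2\Omega^{m,1}(A,g_{s_n}|_A)}.
\end{equation*}
Let $K:=\supp\phi\cap\supp\overline{\partial}_{m,0}\psi$, a compact subset of $A$. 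Since $g_s\in C^{\infty}(M\times[0,1])$ is positive definite on $A$ for every $s\in[0,1]$, it is uniformly positive definite on $K\times[0,1]$ by compactness; hence the integrand $g^*_{s_n,m,1}(\phi,\overline{\partial}_{m,0}\psi)\,\dvol_{g_{s_n}}$ converges uniformly on $K$ to $g^*_{0,m,1}(\phi,\overline{\partial}_{m,0}\psi)\,\dvol_{g_0}$. Dominated convergence then yields
\begin{equation*}
\lim_{n\to\infty}\langle\phi,\overline{\partial}_{m,0}\psi\rangle_{L^2\Omega^{m,1}(A,g_{s_n}|_A)}
=\langle\phi,\overline{\partial}_{m,0}\psi\rangle_{L^2\Omega^{m,1}(A,g_0|_A)}
=\langle\overline{\partial}^{t,0}_{m,0}\phi,\psi\rangle_{L^2\Omega^{m,0}(A,g_0|_A)},
\end{equation*}
the second equality using the formal adjoint property with respect to $g_0|_A$. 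So the required convergence of inner products holds on the dense subspace $\Omega_c^{m,0}(A)\subset L^2\Omega^{m,0}(A,g_0|_A)$.

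To upgrade this to weak convergence against every $\omega\in L^2\Omega^{m,0}(A,g_0|_A)$, I need a uniform bound $\sup_n\|\overline{\partial}^{t,s_n}_{m,0}\phi\|_{L^2\Omega^{m,0}(A,g_0|_A)}<\infty$. In any local holomorphic chart meeting $\supp\phi$, $\overline{\partial}^{t,s}_{m,0}$ is a first-order differential operator whose coefficients are smooth rational expressions in the entries of $g_s$ and its inverse; by uniform positive definiteness of $g_s$ on $\supp\phi\times[0,1]$, these coefficients are jointly smooth and bounded in $(p,s)$ on the compact set $\supp\phi\times[0,1]$. Consequently $\overline{\partial}^{t,s_n}_{m,0}\phi$ is a smooth $(m,0)$-form supported in $\supp\phi$ with $C^0$-norm uniformly bounded in $n$, and the uniform $L^2(A,g_0|_A)$-bound follows. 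A standard $\varepsilon/3$ argument using this bound together with the convergence on $\Omega_c^{m,0}(A)$ and density completes the proof.

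I expect no real obstacle: the argument is purely a compact-support argument that avoids the degeneracy of $g_0$ on $Z$. The only delicate bookkeeping is the uniform bound of the coefficients of $\overline{\partial}^{t,s}_{m,0}$ in $s$, but this is immediate once one writes the formal adjoint explicitly in coordinates and uses compactness of $\supp\phi\times[0,1]$.
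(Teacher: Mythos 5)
Your proof is correct, and it shares the paper's overall skeleton (a uniform $L^2(g_0)$-bound on $\overline{\partial}^{t,s_n}_{m,0}\phi$ obtained from compactness of $\supp\phi\times[0,1]$, followed by convergence of the pairings against a dense subspace), but the two middle steps are carried out along a genuinely different route. The paper tests against the dense subspace $\mathcal{D}(Q_0)=\mathcal{D}(\overline{\partial}_{m,0,\max})$, moves the adjoint using the distributional characterization of the maximal extension together with Prop.~\ref{extension}, and then passes to the limit in $\langle\overline{\partial}_{m,0,\max}\omega,\phi\rangle_{L^2\Omega^{m,1}(A,g_{s_n}|_A)}$ by invoking the Kuwae--Shioya convergence machinery (Cor.~\ref{swim}, which rests on the dominated-convergence argument of Prop.~\ref{soothe} with the global domination $g_0\leq\mathfrak{a}g_s$), since $\overline{\partial}_{m,0,\max}\omega$ is a general $L^2$ form with no support control. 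You instead test against $\Omega^{m,0}_c(A)$, which keeps \emph{both} entries of the $(m,1)$-pairing compactly supported inside $A$; the limit then follows from elementary uniform convergence of smooth data on the compact set $\supp\phi\times[0,1]$, with no need for the global domination hypothesis or the abstract strong-convergence framework. What your approach buys is self-containedness and locality — it isolates the fact that this proposition is purely a compact-support statement away from the degeneracy locus; what the paper's choice buys is that the identity $\langle\omega,\overline{\partial}^{t,s_n}_{m,0}\phi\rangle=\langle\overline{\partial}_{m,0}\omega,\phi\rangle$ for $\omega\in\mathcal{D}(Q_0)$ is exactly the form in which the proposition is consumed later (in Prop.~\ref{domain} and Prop.~\ref{maining}), so the paper's proof doubles as the computation needed there. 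Your concluding $\varepsilon/3$ argument and the coordinate argument for the uniform bound are both fine (the paper gets the same bound slightly more slickly by observing that $(p,s)\mapsto g^*_{0,m,0}(\overline{\partial}^{t,s}_{m,0}\phi,\overline{\partial}^{t,s}_{m,0}\phi)$ is a continuous function with compact support in $A\times[0,1]$).
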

\begin{proof}
Let $\chi:A\times [0,1]\rightarrow \mathbb{R}$ be the function defined as $\chi(p,s):=h^*_{m,0}(\overline{\partial}^{t,s}_{m,0}\phi,\overline{\partial}_{m,0}^{t,s}\phi)$, that is for any $(p,s)\in A\times [0,1]$, $\chi(p,s)$ is given by the square of the pointwise norm in $p$ of $\overline{\partial}^{t,s}_{m,0}\phi$ with respect to $h^*_{m,0}$. By the fact that $g_s\in C^{\infty}(A\times [0,1], p^*T^*A\otimes p^*T^*A)$ and $\phi\in \Omega^{m,1}_c(A)$ we know that $\chi$ is continuous on $A\times [0,1]$ and $\supp(\chi)\subset \supp(\phi)\times [0,1]$. In particular $\supp(\chi)$ is a compact subset of $A\times [0,1]$. Therefore there exists a positive constant $b\in \mathbb{R}$ such that $\chi(p,s)\leq b$ for any $p\in A$ and $s\in [0,1]$, that is $h^*_{m,0}(\overline{\partial}_{m,0}^{t,s}\phi,\overline{\partial}_{m,0}^{t,s}\phi)\leq b$ on $A\times [0,1]$. This latter inequality tells us that $\|\overline{\partial}_{m,0}^{t,s}\phi\|^2_{L^2\Omega^{m,0}(A,h|_A)}\leq b\vol_{h}(A)$ for any $s\in[0,1]$. Now, as we know that  $\{\|\overline{\partial}_{m,0}^{t,s_n}\phi\|_{L^2\Omega^{m,0}(A,h|_A)}\}_{n\in \mathbb{N}}$ is a bounded sequence, in order to conclude the proof it is enough to fix a dense subset $Z$ of $L^2\Omega^{m,0}(A,h|_A)$ and to show that 
$$\lim_{n\rightarrow \infty}\langle\omega,\overline{\partial}_{m,0}^{t,s_n}\phi\rangle_{L^2\Omega^{m,0}(A,h|_A)}=\langle\omega,\overline{\partial}_{m,0}^{t,0}\phi\rangle_{L^2\Omega^{m,0}(A,h|_A)}$$ for any $\omega\in Z$. Let us fix $Z:=\mathcal{D}(Q_0)$ and let $\omega\in \mathcal{D}(Q_0)$. Then, using \eqref{equality} and Prop. \ref{extension}, we have $$\langle\omega,\overline{\partial}_{m,0}^{t,s_n}\phi\rangle_{L^2\Omega^{m,0}(A,h|_A)}=\langle\omega,\overline{\partial}_{m,0}^{t,s_n}\phi\rangle_{L^2\Omega^{m,0}(A,g_{s_n}|_A)}=\langle\overline{\partial}_{m,0}\omega,\phi\rangle_{L^2\Omega^{m,1}(A,g_{s_n}|_A)}.$$ In this way, keeping in mind \eqref{equality}, Cor. \ref{swim} and Prop. \ref{extension}, we have 
\begin{align}
\nonumber & \lim_{n\rightarrow \infty}\langle\omega,\overline{\partial}_{m,0}^{t,s_n}\phi\rangle_{L^2\Omega^{m,0}(A,h|_A)}=\lim_{n\rightarrow \infty}\langle\omega,\overline{\partial}_{m,0}^{t,s_n}\phi\rangle_{L^2\Omega^{m,0}(A,g_{s_n}|_A)}=\lim_{n\rightarrow \infty}\langle\overline{\partial}_{m,0}\omega,\phi\rangle_{L^2\Omega^{m,1}(A,g_{s_n}|_A)}=\\
& \nonumber \lim_{n\rightarrow \infty}\langle\overline{\partial}_{m,0,\max}\omega,\phi\rangle_{L^2\Omega^{m,1}(A,g_{s_n}|_A)}= \langle\overline{\partial}_{m,0,\max}\omega,\phi\rangle_{L^2\Omega^{m,1}(A,h|_A)}=\langle\omega,\overline{\partial}_{m,0}^{t,0}\phi\rangle_{L^2\Omega^{m,0}(A,h|_A)}
\end{align}
as desired.
\end{proof}

\begin{prop}
\label{asycom}
The sequence of closed quadratic forms $\{Q_{s_n}\}_{n\in \mathbb{N}}$ is asymptotically compact.
\end{prop}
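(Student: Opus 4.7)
The plan is to establish asymptotic compactness by transferring the problem to the fixed compact Hermitian manifold $(M,g_1)$, where a standard elliptic compactness argument is available. As a preliminary, note that by \eqref{stesso} every Hilbert space $H_n:=L^2\Omega^{m,0}(A,g_{s_n}|_A)$ coincides with $H:=L^2\Omega^{m,0}(A,g_0|_A)$ as a Hilbert space (same underlying set, same inner product), and the maps $\Phi_n$ used in Prop. \ref{soothe}(1) are literally identities. Consequently, by the discussion following Prop. \ref{wibounded} concerning the constant-sequence case, strong convergence in the sense of Def. \ref{strong} reduces here to ordinary norm convergence in $H$.

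Let $\{u_n\}_{n\in\mathbb{N}}$ with $u_n\in H_n$ satisfy $\limsup_n\bigl(\|u_n\|_{H_n}+\overline{Q}_{s_n}(u_n)\bigr)<\infty$; in particular $u_n\in\mathcal{D}(Q_{s_n})$ for large $n$. The first step is to invoke Prop. \ref{pakistan}(3), which yields
\[
\|u_n\|^2_{L^2\Omega^{m,0}(A,g_1|_A)}+Q_1(u_n,u_n)\;\le\;\tau\bigl(\|u_n\|^2_{H_n}+Q_{s_n}(u_n,u_n)\bigr).
\]
The right-hand side is uniformly bounded, so $\{u_n\}$ is a bounded sequence in the Hilbert space $(\mathcal{D}(Q_1),Q_{1,H})$, which is exactly the form domain of the Hodge--Kodaira Laplacian $\Delta_{\overline{\partial},m,0,1}$.

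The decisive compactness input is now the following: since $(M,g_1)$ is a compact Hermitian manifold, standard elliptic theory (already recalled in the introduction) ensures that $\Delta_{\overline{\partial},m,0,1}$ has entirely discrete spectrum, hence compact resolvent; equivalently its form domain $(\mathcal{D}(Q_1),Q_{1,H})$ embeds compactly into $L^2\Omega^{m,0}(M,g_1)$. Applying this to the bounded sequence just produced, we extract a subsequence $\{u_{n_k}\}$ converging in $L^2\Omega^{m,0}(M,g_1)$ to some limit $u$. Since $L^2\Omega^{m,0}(M,g_1)=H$ as Hilbert spaces by \eqref{stesso}, we have $u\in H$ and $\|u_{n_k}-u\|_H\to 0$; by the preliminary remark this is precisely strong convergence in the sense of Def. \ref{strong}, completing the proof.

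I do not anticipate a genuine obstacle: both ingredients, namely the uniform continuous inclusion of Prop. \ref{pakistan}(3) and the compact embedding of the form domain on the fixed compact manifold $(M,g_1)$, are already available, and the identification of Hilbert spaces \eqref{stesso} is exactly what turns compact $L^2(M,g_1)$-convergence into convergence in the prescribed target $H$. The only subtle point is the translation between the abstract varying-Hilbert-space language of Def. \ref{strong} and ordinary $L^2$-convergence, but this is immediate in our situation because the spaces $H_n$ literally coincide with $H$ and the $\Phi_n$ are identities.
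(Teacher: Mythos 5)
Your proof is correct and follows essentially the same route as the paper's: a uniform bound in $(\mathcal{D}(Q_1),Q_{1,H})$ via Prop. \ref{pakistan}(3), the compact embedding of that form domain into $L^2\Omega^{m,0}(A,g_1|_A)$, and the identification of Hilbert spaces to convert $L^2(g_1)$-convergence into strong convergence in the sense of Def. \ref{strong}. The only cosmetic difference is that the paper cites the compactness of the embedding from \cite{FBei} pag. 774 rather than deriving it from the discreteness of the spectrum of $\Delta_{\overline{\partial},m,0,1}$ on the compact manifold, but these amount to the same fact.
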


\begin{proof}
Let $\{\omega_n\}_{n\in \mathbb{N}}$ be a sequence with $\omega_n\in L^2\Omega^{m,0}(A,g_{s_n}|_A)$ such that $$\limsup_{n\rightarrow \infty}\left(\|\omega_n\|_{L^2\Omega^{m,0}(A,g_{s_n}|_A)}+\overline{Q}_{s_n}(\omega_n,\omega_n)\right)<\infty.$$
We can deduce the existence of a positive constant $c$ and a subsequence $\{\omega_{\ell}\}_{\ell\in \mathbb{N}}\subset \{\omega_n\}_{n\in \mathbb{N}}$ such that $\omega_{\ell}\in \mathcal{D}(Q_{s_{\ell}})$  and $$\|\omega_{\ell}\|_{L^2\Omega^{m,0}(A,g_{s_{\ell}}|_A)}+Q_{s_{\ell}}(\omega_{\ell},\omega_{\ell})\leq c$$ for any $\ell\in \mathbb{N}$. Hence, thanks to Prop. \ref{pakistan}, we know that $\{\omega_{\ell}\}_{\ell\in \mathbb{N}}\subset \mathcal{D}(Q_1)$ and $$\|\omega_{\ell}\|_{L^2\Omega^{m,0}(A,g_1|_A)}+Q_1(\omega_{\ell},\omega_{\ell})\leq \tau c$$ where $\tau$ is defined in Prop. \ref{pakistan}.
As the injection $(\mathcal{D}(Q_1), Q_{1,H})\hookrightarrow L^2\Omega^{m,0}(A,g_1|_A)$ is a compact operator, see \cite{FBei} pag. 774, we can conclude that there exists a subsequence $\{\omega_v\}_{v\in \mathbb{N}}\subset \{\omega_{\ell}\}_{\ell\in \mathbb{N}}$ and an element $\omega\in L^2\Omega^{m,0}(A,g_1|_A)$ such that $\omega_v\rightarrow \omega$ in $L^2\Omega^{m,0}(A,g_1|_A)$ as $v\rightarrow \infty$. Now looking at the sequence $\{\omega_v\}_{v\in \mathbb{N}}$ as a sequence where each element $\omega_v\in L^2\Omega^{m,0}(A,g_{s_v}|_A)$ and keeping in mind \eqref{equality}, it is immediate to check $\omega_v\rightarrow \omega \in L^2\Omega^{m,0}(A,h|_A)$ strongly as $v\rightarrow \infty$ in the sense of Def. \ref{strong}. Indeed, by Prop. \ref{soothe}, we have $\mathcal{C}=L^2\Omega^{m,0}(A,h|_A)$ and $\Phi_v:\mathcal{C}\rightarrow L^2\Omega^{m,0}(A,g_{s_v}|_A)$ is just the identity $\id:L^2\Omega^{m,0}(A,h|_A)\rightarrow L^2\Omega^{m,0}(A,g_{s_v}|_A)$. Therefore, taking the constant sequence $\{\omega\}$ as a net in $\mathcal{C}$ converging to $\omega$, \eqref{stronglimit} becomes 
\begin{equation}
\label{settimo}
\limsup_{v\rightarrow \infty}\|\omega-\omega_{v}\|_{L^2\Omega^{m,0}(A,h|_A)}
\end{equation}
 and we have already shown above that \eqref{settimo} is zero. The proposition is thus established.
\end{proof}

\begin{prop}
\label{domain}
 Let $\omega \in L^{2}\Omega^{m,0}(A,h|_A)$. Assume that there exists a sequence $\{\omega_n\}_{n\in \mathbb{N}}$ such that $\omega_n\in \mathcal{D}(Q_{s_n})$, $\omega_n\rightarrow \omega$ weakly as $n\rightarrow \infty$ and $Q_{s_n}(\omega_n,\omega_n)\leq c$ for any $n\in \mathbb{N}$ and a positive constant $c$. Then $\omega\in \mathcal{D}(Q_0)$.
\end{prop}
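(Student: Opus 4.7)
The plan is to show $\omega\in\mathcal{D}(\overline{\partial}_{m,0,\max})$ by exhibiting some $\eta\in L^2\Omega^{m,1}(A,g_0|_A)$ satisfying $\langle\omega,\overline{\partial}^{t,0}_{m,0}\phi\rangle_{L^2\Omega^{m,0}(A,g_0|_A)}=\langle\eta,\phi\rangle_{L^2\Omega^{m,1}(A,g_0|_A)}$ for every test form $\phi\in\Omega^{m,1}_c(A)$; by the distributional definition of the maximal extension recalled in Section~1, this gives $\omega\in\mathcal{D}(Q_0)$. First I would produce the candidate $\eta$ via weak compactness. The hypothesis $Q_{s_n}(\omega_n,\omega_n)\leq c$ says that $\{\overline{\partial}_{m,0}\omega_n\}$ is uniformly bounded along the converging sequence of Hilbert spaces $L^2\Omega^{m,1}(A,g_{s_n}|_A)$ (Prop.~\ref{soothe}(2)), so Prop.~\ref{bounded} would furnish a subsequence, which I relabel, with $\overline{\partial}_{m,0}\omega_n$ weakly convergent, in the sense of Def.~\ref{weak}, to some $\eta\in L^2\Omega^{m,1}(A,g_0|_A)$.

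Next I would fix $\phi\in\Omega^{m,1}_c(A)$ and exploit the identity
\begin{equation*}
\langle\overline{\partial}_{m,0}\omega_n,\phi\rangle_{L^2\Omega^{m,1}(A,g_{s_n}|_A)}=\langle\omega_n,\overline{\partial}^{t,s_n}_{m,0}\phi\rangle_{L^2\Omega^{m,0}(A,g_{s_n}|_A)}=\langle\omega_n,\overline{\partial}^{t,s_n}_{m,0}\phi\rangle_{L^2\Omega^{m,0}(A,g_0|_A)},
\end{equation*}
where the first equality is the defining pairing of the closed extension $\overline{\partial}_{m,0}$ with respect to $g_{s_n}$ (with $\phi\in\Omega^{m,1}_c(A)$) and the second is the isometry \eqref{equality} in bidegree $(m,0)$. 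The left-hand side converges to $\langle\eta,\phi\rangle_{L^2\Omega^{m,1}(A,g_0|_A)}$ because the constant sequence with value $\phi$ is strongly convergent by Cor.~\ref{swim} and $\overline{\partial}_{m,0}\omega_n$ converges weakly to $\eta$. For the right-hand side, I would observe that in bidegree $(m,0)$ all the spaces $L^2\Omega^{m,0}(A,g_{s_n}|_A)$ coincide isometrically with $L^2\Omega^{m,0}(A,g_0|_A)$ by \eqref{equality}, so the hypothesized generalized weak convergence of $\omega_n$ to $\omega$ is simply ordinary weak convergence in this fixed Hilbert space.

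The step I expect to be the main obstacle is upgrading Prop.~\ref{www} to the \emph{strong} convergence
\begin{equation*}
\overline{\partial}^{t,s_n}_{m,0}\phi\longrightarrow\overline{\partial}^{t,0}_{m,0}\phi\qquad\text{in }L^2\Omega^{m,0}(A,g_0|_A),
\end{equation*}
so that it can be paired with the weak convergence of $\omega_n$ to give a weak-times-strong limit. The intended argument is elementary: $\supp(\phi)$ is a fixed compact subset $K$ of $A$ on which $g_0$ is smooth and positive definite, and since $g_s$ is $C^\infty$ on $M\times[0,1]$ the coefficients of the formal adjoint $\overline{\partial}^{t,s}_{m,0}$ depend smoothly on $s$ uniformly on $K$; hence $\overline{\partial}^{t,s_n}_{m,0}\phi\to\overline{\partial}^{t,0}_{m,0}\phi$ uniformly on $K$ with common support inside $K$, and dominated convergence upgrades this to $L^2$-strong convergence against $g_0|_A$. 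Weak-times-strong then yields
\begin{equation*}
\lim_{n\to\infty}\langle\omega_n,\overline{\partial}^{t,s_n}_{m,0}\phi\rangle_{L^2\Omega^{m,0}(A,g_0|_A)}=\langle\omega,\overline{\partial}^{t,0}_{m,0}\phi\rangle_{L^2\Omega^{m,0}(A,g_0|_A)},
\end{equation*}
and equating this with the limit of the left-hand side gives the required distributional identity $\langle\omega,\overline{\partial}^{t,0}_{m,0}\phi\rangle=\langle\eta,\phi\rangle$ for every $\phi\in\Omega^{m,1}_c(A)$, whence $\omega\in\mathcal{D}(\overline{\partial}_{m,0,\max})=\mathcal{D}(Q_0)$.
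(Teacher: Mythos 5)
Your argument is correct, and it shares the paper's overall architecture --- produce $\eta$ as a weak limit of $\overline{\partial}_{m,0}\omega_n$ via Prop.~\ref{bounded}, then verify the distributional identity $\langle\omega,\overline{\partial}^{t,0}_{m,0}\phi\rangle_{L^2\Omega^{m,0}(A,g_0|_A)}=\langle\eta,\phi\rangle_{L^2\Omega^{m,1}(A,g_0|_A)}$ for all test forms $\phi$ --- but it justifies the crucial middle limit by a genuinely different mechanism. The paper keeps Prop.~\ref{www} as stated (only \emph{weak} convergence of $\overline{\partial}^{t,s_n}_{m,0}\phi$) and compensates by invoking asymptotic compactness (Prop.~\ref{asycom}) to extract a further subsequence along which $\omega_n\rightarrow\omega$ \emph{strongly} in $L^2\Omega^{m,0}(A,g_0|_A)$; the limit $\lim_n\langle\omega_n,\overline{\partial}^{t,s_n}_{m,0}\phi\rangle$ is then computed as a strong-times-(bounded) pairing. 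You instead leave $\omega_n$ merely weakly convergent and upgrade the convergence of $\overline{\partial}^{t,s_n}_{m,0}\phi$ to strong $L^2$-convergence. That upgrade is legitimate: the pointwise $g^*_{0,m,0}$-norm of $\overline{\partial}^{t,s}_{m,0}\phi-\overline{\partial}^{t,0}_{m,0}\phi$ is continuous on $M\times[0,1]$, vanishes at $s=0$, and is supported in the fixed compact set $\supp(\phi)\times[0,1]\subset A\times[0,1]$, on which every $g_s$ is uniformly nondegenerate; compactness then gives uniform convergence on $\supp(\phi)$ and hence $L^2(g_0)$-convergence. This is essentially the same continuity observation the paper already exploits in the proof of Prop.~\ref{www} to get the uniform bound, pushed one step further. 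Your route is slightly leaner (one subsequence extraction instead of two, no appeal to Prop.~\ref{asycom}); the paper's route reuses Prop.~\ref{www} verbatim and leans on the asymptotic-compactness machinery that is needed anyway for the Mosco convergence in Prop.~\ref{maining}. Both are sound.
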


\begin{proof}
By the hypothesis and the very definition of $Q_{s_n}$ we have  $\langle \overline{\partial}_{m,0} \omega_n,\overline{\partial}_{m,0} \omega_n\rangle_{L^2\Omega^{m,1}(A,g_{s_n})}\leq c$ for any $n\in \mathbb{N}$. Thanks to Prop. \ref{bounded} we know that there exists $\eta\in L^2\Omega^{m,1}(A,h|_A)$ and a subsequence $\{\omega_v\}_{v\in \mathbb{N}}\subset \{\omega_n\}_{n\in \mathbb{N}}$, $\omega_v\in \mathcal{D}(Q_{s_v})$ such that $\overline{\partial}_{m,0}\omega_v\rightarrow \eta$ weakly in the sense of Def. \ref{weak} as $v\rightarrow \infty$. Consider now the subsequence $\{\omega_v\}_{v\in \mathbb{N}}$. Since $\omega_v\rightarrow \omega$ weakly as $v\rightarrow \infty$ we have $\|\omega_v\|_{L^2\Omega^{m,0}(A,h|_A)}\leq d$ for some positive constant $d$ and any $v\in \mathbb{N}$, see for instance Prop. \ref{wibounded}. Hence $$Q_{s_v,H}(\omega_v,\omega_v)\leq d+c.$$ Thus by Prop. \ref{asycom} we know that there is a subsequence $\{\omega_{w}\}_{w\in \mathbb{N}}\subset \{\omega_{v}\}_{v\in \mathbb{N}}$ such that $\omega_w\rightarrow \omega$ in $L^2\Omega^{m,0}(A,h|_A)$ as $w\rightarrow \infty$.
We are in position to check that $\omega \in \mathcal{D}(Q_{0})$. Let $\phi\in \Omega_c^{m,1}(A)$. Thanks to Prop. \ref{www} we have

\begin{align}
\nonumber & \langle\omega,\overline{\partial}_{m,0}^{t,0}\phi\rangle_{L^2\Omega^{m,0}(A,h|_A)}= \lim_{w\rightarrow \infty} \langle\omega,\overline{\partial}_{m,0}^{t,s_w}\phi\rangle_{L^2\Omega^{m,0}(A,g_{s_w}|_A)}=\\
&\nonumber \lim_{w\rightarrow \infty}\langle\omega_w,\overline{\partial}_{m,0}^{t,s_w}\phi\rangle_{L^2\Omega^{m,0}(A,g_{s_w}|_A)}= \lim_{w\rightarrow \infty}\langle\overline{\partial}_{m,0}\omega_w,\phi\rangle_{L^2\Omega^{m,1}(A,g_{s_w}|_A)}=\langle \eta, \phi \rangle_{L^2\Omega^{m,1}(A,h|_A)}.
\end{align}

Thus we proved $$\langle\omega,\overline{\partial}_{m,0}^{t,0}\phi\rangle_{L^2\Omega^{m,0}(A,h|_A)}=\langle \eta, \phi \rangle_{L^2\Omega^{m,1}(A,h|_A)}$$ for any $\phi\in \Omega_c^{m,1}(A)$ and so $\omega\in \mathcal{D}(Q_0)$ and $\overline{\partial}_{m,0,\max}\omega=\eta$ as desired.
\end{proof}

\begin{prop}
\label{maining}
The sequence of closed quadratic forms $\{Q_{s_n}\}_{n\in \mathbb{N}}$ Mosco-converges to $Q_0$.
\end{prop}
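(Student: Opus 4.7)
The plan is to verify directly the two conditions in Definition \ref{Mosco}. Virtually all the analytic work has already been packaged into Propositions \ref{soothe}, \ref{extension}, \ref{www}, \ref{asycom} and \ref{domain}, so the argument is essentially an assembly of these.

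For the weak-liminf condition, let $\{u_n\}_{n\in\mathbb{N}}$, $u_n\in L^2\Omega^{m,0}(A,g_{s_n}|_A)$, converge weakly to some $u\in L^2\Omega^{m,0}(A,g_0|_A)$. If $\liminf_{n}\overline{Q}_{s_n}(u_n,u_n)=\infty$ there is nothing to prove, so after extracting a subsequence we may assume the liminf is a finite limit bounded by some constant $c$. Proposition \ref{domain} then yields $u\in\mathcal{D}(Q_0)$, and inspecting its proof one extracts, along a further subsequence, a cross-Hilbert-space weak limit $\overline{\partial}_{m,0}u_n\to \overline{\partial}_{m,0,\max}u$ in the sense of Definition \ref{weak}, with respect to the converging sequence $\{L^2\Omega^{m,1}(A,g_{s_n}|_A)\}\to L^2\Omega^{m,1}(A,g_0|_A)$ from Proposition \ref{soothe}(2). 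The lower semicontinuity of the norm under weak convergence (Proposition \ref{wibounded}) then yields $Q_0(u,u)=\|\overline{\partial}_{m,0,\max}u\|_{L^2\Omega^{m,1}(A,g_0|_A)}^2\leq \liminf_n \|\overline{\partial}_{m,0}u_n\|_{L^2\Omega^{m,1}(A,g_{s_n}|_A)}^2=\liminf_n Q_{s_n}(u_n,u_n)$.

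For the recovery-sequence condition, I use the constant sequence $u_n\equiv u$ for every $u\in L^2\Omega^{m,0}(A,g_0|_A)$. Strong convergence in the sense of Definition \ref{strong} is immediate: take the constant net $v_\beta:=u$ in $\mathcal{C}=L^2\Omega^{m,0}(A,g_0|_A)$, so that by \eqref{equality} the left-hand side of \eqref{stronglimit} is identically zero. I then split into cases. If $u\in\mathcal{D}(Q_0)$, Proposition \ref{extension} gives $u\in\mathcal{D}(\overline{\partial}_{m,0})=\mathcal{D}(Q_{s_n})$ with $\overline{\partial}_{m,0}u=\overline{\partial}_{m,0,\max}u=:\eta\in L^2\Omega^{m,1}(A,g_0|_A)$, and Proposition \ref{soothe}(2) applied to the fixed form $\eta$ yields $Q_{s_n}(u,u)=\|\eta\|^2_{L^2\Omega^{m,1}(A,g_{s_n}|_A)}\to \|\eta\|^2_{L^2\Omega^{m,1}(A,g_0|_A)}=Q_0(u,u)$. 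If instead $u\notin\mathcal{D}(Q_0)$, so $\overline{Q}_0(u)=\infty$, there are two sub-alternatives: either $u\notin\mathcal{D}(\overline{\partial}_{m,0})$, in which case $\overline{Q}_{s_n}(u)=\infty=\overline{Q}_0(u)$ for every $n$; or $u\in\mathcal{D}(\overline{\partial}_{m,0})$ while the intrinsic distributional derivative $\eta:=\overline{\partial}_{m,0}u$ fails to lie in $L^2\Omega^{m,1}(A,g_0|_A)$ (otherwise it would witness $u\in\mathcal{D}(\overline{\partial}_{m,0,\max})$), and Fatou's lemma applied to the identity $\|\eta\|^2_{L^2(A,g_s|_A)}=\int_A g^*_{1,m,1}(\id\otimes G^{0,1}_{s,\mathbb{C}}\eta,\eta)\,\dvol_{g_1}$ from \eqref{ibra3}, combined with the pointwise a.e.\ convergence $G^{0,1}_{s_n,\mathbb{C}}\to G^{0,1}_{0,\mathbb{C}}$ on $A$, forces $Q_{s_n}(u,u)\to\infty=\overline{Q}_0(u)$.

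The only delicate point is in the first condition: the weak convergence of $\overline{\partial}_{m,0}u_n$ to $\overline{\partial}_{m,0,\max}u$ must be read across the varying family of Hilbert spaces, not inside a fixed one, and the identification of the weak limit rests on the distributional computation of Proposition \ref{www} that has already been extracted within the proof of Proposition \ref{domain}. Everything else reduces to a direct invocation of results already established.
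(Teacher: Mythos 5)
Your proof is correct and follows essentially the same route as the paper's: the constant recovery sequence together with Prop.~\ref{soothe} and Prop.~\ref{extension} for condition (2), and for condition (1) the extraction of a weak limit of $\overline{\partial}_{m,0}\omega_n$ identified via Prop.~\ref{www} (which, as you note, requires the strong $L^2$-convergence supplied by Prop.~\ref{asycom} inside the proof of Prop.~\ref{domain}) followed by lower semicontinuity of the norm. The only local deviation is the final subcase $\omega\in\mathcal{D}(\overline{\partial}_{m,0})\setminus\mathcal{D}(Q_0)$, where you use Fatou's lemma on \eqref{ibra3} while the paper invokes the contrapositive of Prop.~\ref{domain}; both arguments are valid and of comparable length.
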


\begin{proof}
According to Def. \ref{Mosco} we divide the proof in two steps. First we want to show that  
\begin{itemize}
\item for any sequence $\{\omega_n\}_{n\in \mathbb{N}}\subset L^2\Omega^{m,0}(A,h|_A)$  weakly convergent to some $\omega\in L^2\Omega^{m,0}(A,h|_A)$ we have 
\begin{equation}
\label{aranw}
\overline{Q}_0(\omega)\leq \liminf_{n\rightarrow \infty}\overline{Q}_{s_n}(\omega_n,\omega_n).
\end{equation}
\end{itemize}
Let's consider first the case $\omega\in \mathcal{D}(Q_0)$. If $\liminf_{n\rightarrow \infty}\overline{Q}_{s_n}(\omega_n,\omega_n)=\infty$ then the above inequality is clearly fulfilled. Assume now that $\liminf_{n\rightarrow \infty}\overline{Q}_{s_n}(\omega_n,\omega_n)<\infty$. Then we can extract a subsequence $\{\omega_u\}_{u\in \mathbb{N}}\subset \{\omega_n\}_{n\in \mathbb{N}}$ such that $\omega_u\in \mathcal{D}(Q_{s_u})$ and 
\begin{equation}
\label{draw}
\lim_{u\rightarrow \infty}Q_{s_u}(\omega_u,\omega_u)=\liminf_{n\rightarrow \infty}\overline{Q}_{s_n}(\omega_n,\omega_n)=c
\end{equation}
 for some $c\in \mathbb{R}$. Thus, thanks to Prop. \ref{bounded}, we can pass to a new subsequence $\{\omega_v\}_{v\in \mathbb{N}}\subset \{\omega_u\}_{u\in \mathbb{N}}$, $\omega\in \mathcal{D}(Q_{s_v})$, such that $\overline{\partial}_{m,0}\omega_v\rightarrow \eta$ weakly in the sense of Def. \ref{weak} to some $\eta\in L^2\Omega^{m,1}(A,h|_A)$. As $\omega_v\rightarrow \omega$ weakly in $L^2\Omega^{m,0}(A,h|_A)$ as $v\rightarrow \infty$ we know that $\{\omega_v\}_{v\in \mathbb{N}}$ is a bounded sequence. Therefore $$\limsup_{v\rightarrow\infty}Q_{s_v,H}(\omega_v,\omega_v)<\infty$$ and so, thanks to Prop. \ref{asycom}, we know that there is a subsequence $\{\omega_w\}_{w\in \mathbb{N}}\subset \{\omega_v\}_{v\in \mathbb{N}}$, $\omega_w\in \mathcal{D}(Q_{s_w})$,  such that $\omega_w\rightarrow \omega$ in $L^2\Omega^{m,0}(A,h|_A)$ as $w\rightarrow \infty$. Now we claim that $\omega\in \mathcal{D}(\overline{\partial}_{m,0,\max})$ and  $$\overline{\partial}_{m,0,\max}\omega=\eta.$$ This follows by arguing as in the proof of Prop. \ref{domain}. Indeed let $\phi\in \Omega_c^{m,1}(A)$. By Prop. \ref{www} we have 
\begin{align}
\nonumber & \langle\omega,\overline{\partial}_{m,0}^{t,0}\phi\rangle_{L^2\Omega^{m,0}(A,h|_A)}= \lim_{w\rightarrow \infty} \langle\omega,\overline{\partial}_{m,0}^{t,s_w}\phi\rangle_{L^2\Omega^{m,0}(A,g_{s_w}|_A)}=\\
&\nonumber \lim_{w\rightarrow \infty}\langle\omega_w,\overline{\partial}_{m,0}^{t,s_w}\phi\rangle_{L^2\Omega^{m,0}(A,g_{s_w}|_A)}= \lim_{w\rightarrow \infty}\langle\overline{\partial}_{m,0}\omega_w,\phi\rangle_{L^2\Omega^{m,1}(A,g_{s_w}|_A)}=\langle \eta, \phi \rangle_{L^2\Omega^{m,1}(A,h|_A)}.
\end{align}

Finally, thanks to \eqref{lerume} and \eqref{draw}, we have 
\begin{align}
& \nonumber Q_{0}(\omega,\omega)=\langle\overline{\partial}_{m,0,\max}\omega,\overline{\partial}_{m,0,\max}\omega   \rangle_{L^2\Omega^{m,1}(A,h|_A)}=\langle\eta,\eta\rangle_{L^2\Omega^{m,1}(A,h|_A)}\leq\\
& \nonumber  \liminf_{w\rightarrow \infty} \langle \overline{\partial}_{m,0}\omega_w,\overline{\partial}_{m,0}\omega_w \rangle_{L^2\Omega^{m,1}(A,g_{s_w}|_A)}=\lim_{w\rightarrow \infty} \langle \overline{\partial}_{m,0}\omega_w,\overline{\partial}_{m,0}\omega_w \rangle_{L^2\Omega^{m,1}(A,g_{s_w}|_A)}=\\
&\nonumber \lim_{w\rightarrow \infty} Q_{s_w}(\omega_w,\omega_w)= \liminf_{n\rightarrow \infty} \overline{Q}_{s_n}(\omega_n,\omega_n).
\end{align}
The above inequality $\langle\eta,\eta\rangle_{L^2\Omega^{m,1}(A,h|_A)}\leq$  $\liminf_{w\rightarrow \infty} \langle \overline{\partial}_{m,0}\omega_w,\overline{\partial}_{m,0}\omega_w \rangle_{L^2\Omega^{m,1}(A,g_{s_w}|_A)}$ follows by \eqref{lerume} combined with the fact that $\{\omega_w\}\subset\{\omega_v\}$ and $\overline{\partial}_{m,0}\omega_v\rightarrow \eta$ weakly in the sense of Def. \ref{weak} to  $\eta\in L^2\Omega^{m,1}(A,h|_A)$. Moreover $\lim_{w\rightarrow \infty} \langle \overline{\partial}_{m,0}\omega_w,\overline{\partial}_{m,0}\omega_w \rangle_{L^2\Omega^{m,1}(A,g_{s_w}|_A)}$ exists thanks to \eqref{draw} and the fact that $\{\omega_w\}_{w\in\mathbb{N}}\subset \{\omega_u\}_{u\in\mathbb{N}}$.
In conclusion we proved that $$Q_0(\omega,\omega)\leq \liminf_{n\rightarrow\infty}{\overline{Q}}_{s_n}(\omega_n,\omega_n)$$ as desired. Now consider the case $\omega\notin \mathcal{D}(Q_0)$. Then, thanks to Prop. \ref{domain}, we have $$\liminf_{n\rightarrow\infty}\overline{Q}_{s_n}(\omega_n,\omega_n)=\infty$$ for any sequence $\{\omega_n\}_{n\in\mathbb{N}}$ weakly convergent to $\omega$ in $L^2\Omega^{m,0}(A,h|_A)$. In particular \eqref{aranw} is satisfied. This establishes the first part of the proof. Now we come to the second part. We have to show that:
\begin{itemize}
\item  for any $\omega\in L^2\Omega^{m,0}(A,h|_A)$ there exists a sequence $\{\omega_n\}_{n\in \mathbb{N}}\subset L^2\Omega^{m,0}(A,h|_A)$ with $\omega_n\rightarrow \omega$ in $L^2\Omega^{m,0}(A,h|_A)$ such that $$\overline{Q}_0(\omega,\omega)=\lim_{n\rightarrow \infty} \overline{Q}_{s_n}(\omega_n,\omega_n).$$
\end{itemize}
Let $\omega\in L^2\Omega^{m,0}(A,h|_A)$. Consider the constant sequence $\{\omega_n\}_{n\in \mathbb{N}}$, $\omega_n:=\omega$, which clearly converges to $\omega$ in the sense of Def. \ref{strong}. Let's consider first the case $\omega\in \mathcal{D}(Q_0)$. Then, thanks to Prop. \ref{pakistan}, $\omega\in \mathcal{D}(Q_{s_n})$ and $Q_{s_n}(\omega,\omega)\rightarrow Q_0(\omega,\omega)$ as $n\rightarrow \infty$. Assume now that  $\omega\notin \mathcal{D}(Q_0)$. If there exists a positive integer $\overline{n}$ such that $\omega\notin \mathcal{D}(Q_{s_n})$ for $n\geq \overline{n}$ then we have $\overline{Q}_0(\omega,\omega)=\infty=\overline{Q}_{s_n}(\omega,\omega)$ for any $n\geq \overline{n}$ and therefore $\overline{Q}_0(\omega,\omega)=\lim \overline{Q}_{s_n}(\omega,\omega)$ as $n\rightarrow \infty$. Finally let us tackle the remaining case: $\omega \notin \mathcal{D}(Q_0)$ and $\omega\in \mathcal{D}(Q_{s_n})$ for each $n\in \mathbb{N}$. Then, by Prop. \ref{domain}, we have $\liminf_{n\rightarrow \infty}Q_{s_n}(\omega_n,\omega_n)=\infty$. Therefore $\lim_{n\rightarrow \infty}Q_{s_n}(\omega_n,\omega_n)=\infty=\overline{Q}_0(\omega,\omega)$ and this concludes the proof.
\end{proof}

Now we can conclude the proof of Th. \ref{spectralth}.
\begin{proof}
Let $\{s_n\}_{n\in \mathbb{N}}\subset (0,1]$ be any sequence with $s_n\rightarrow \infty$ as $n\rightarrow \infty$. We have seen, thanks to Prop. \ref{asycom} and Prop. \ref{maining}, that $\{Q_{s_n}\}_{n\in \mathbb{N}}$ compactly converges to $Q_{0}$ as $n\rightarrow \infty$. As $Q_{s_n}$ is the closed quadratic form associated to \eqref{nondeg} and $Q_0$ is the closed quadratic form associated to \eqref{deg} we can use Th. \ref{mainweapon} to conclude that $$\lim_{n\rightarrow \infty}\lambda_k(s_n)=\lambda_k(0)$$ Since $\{s_n\}_{n\in \mathbb{N}}\subset (0,1]$ is any arbitrary sequence with $s_n\rightarrow 0$ as $n\rightarrow \infty$ we can conclude that  $$\lim_{s\rightarrow 0}\lambda_k(s)=\lambda_k(0).$$ Finally the remaining part of Th. \ref{spectralth} follows immediately by Th. \ref{mainweapon}.
\end{proof}

\begin{cor}
 In the setting of Th. \ref{spectralth}. For any positive integer $k$ the sequence $\{\eta_k(z_n)\}_{n\in \mathbb{N}}$, viewed as a sequence where $\eta_k(z_n)\in (\mathcal{D}(Q_{z_n}),Q_{z_n,H})$, converges strongly to  $\eta_k(0)\in (\mathcal{D}(Q_{0}),Q_{0,H})$ in the sense of Def. \ref{strong}.
\end{cor}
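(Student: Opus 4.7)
The plan is to verify strong convergence directly from Def. \ref{strong}. The Hilbert space convergence structure in play is the one furnished by Prop. \ref{pakistan}(2): $H=(\mathcal{D}(Q_0),Q_{0,H})$, $H_n=(\mathcal{D}(Q_{z_n}),Q_{z_n,H})$, $\mathcal{C}=\mathcal{D}(Q_0)$, and $\Phi_n=i_{0,n}$. Since $\eta_k(0)\in\mathcal{D}(Q_0)$, I would take the constant net $v_{\beta}:=\eta_k(0)$, which trivially tends to $\eta_k(0)$ in $(\mathcal{D}(Q_0),Q_{0,H})$. With this choice condition \eqref{stronglimit} reduces to showing
\[
\limsup_{n\to\infty}\bigl(\|\eta_k(0)-\eta_k(z_n)\|^2_{L^2\Omega^{m,0}(A,g_{z_n}|_A)}+\|\overline{\partial}_{m,0}\eta_k(0)-\overline{\partial}_{m,0}\eta_k(z_n)\|^2_{L^2\Omega^{m,1}(A,g_{z_n}|_A)}\bigr)=0,
\]
where Prop. \ref{extension} legitimises regarding $\eta_k(0)\in\mathcal{D}(\overline{\partial}_{m,0,\max})$ as an element of $\mathcal{D}(\overline{\partial}_{m,0})$ with the two actions of $\overline{\partial}_{m,0}$ agreeing.

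The first summand is immediate: by \eqref{equality} it equals $\|\eta_k(0)-\eta_k(z_n)\|^2_{L^2\Omega^{m,0}(A,g_0|_A)}$, which tends to zero by Th. \ref{spectralth}. For the second summand I would expand the squared norm as
\[
Q_{z_n}(\eta_k(0),\eta_k(0))-2\re Q_{z_n}(\eta_k(0),\eta_k(z_n))+\lambda_k(z_n),
\]
using that $Q_{z_n}(\eta_k(z_n),\eta_k(z_n))=\lambda_k(z_n)$ because $\eta_k(z_n)$ is an $L^2$-normalised eigenform of $\Delta_{\overline{\partial},m,0,z_n}=\overline{\partial}^{t,z_n}_{m,0}\circ\overline{\partial}_{m,0}$ (see \eqref{kelvin}).

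Next I would show that each of the three summands tends to $\lambda_k(0)$. The first one converges to $Q_0(\eta_k(0),\eta_k(0))=\lambda_k(0)$ by Prop. \ref{soothe} applied to the single fixed form $\omega:=\overline{\partial}_{m,0,\max}\eta_k(0)\in L^2\Omega^{m,1}(A,g_0|_A)$. For the cross term I would invoke the eigenform identity $\overline{\partial}^{t,z_n}_{m,0}\overline{\partial}_{m,0}\eta_k(z_n)=\lambda_k(z_n)\eta_k(z_n)$ together with the adjoint pairing; the latter is available because $\eta_k(0)\in\mathcal{D}(\overline{\partial}_{m,0})$ with respect to the $g_{z_n}$-closure (Prop. \ref{extension}) and $\overline{\partial}_{m,0}\eta_k(z_n)\in\mathcal{D}(\overline{\partial}^{t,z_n}_{m,0})$ (domain condition for the Laplacian). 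This gives
\[
Q_{z_n}(\eta_k(0),\eta_k(z_n))=\lambda_k(z_n)\langle\eta_k(0),\eta_k(z_n)\rangle_{L^2(g_0)}\longrightarrow \lambda_k(0),
\]
after applying Th. \ref{spectralth} and \eqref{equality}. Summing the three contributions yields $\lambda_k(0)-2\lambda_k(0)+\lambda_k(0)=0$, as required. The only delicate point is the adjoint identity for the cross term across the two different metrics, but Prop. \ref{extension} supplies exactly the needed compatibility, so I do not foresee any further obstacle.
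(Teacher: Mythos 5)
Your proposal is correct and follows essentially the same route as the paper: reduce Def. \ref{strong} to the vanishing of $Q_{z_n,H}(\eta_k(0)-\eta_k(z_n),\eta_k(0)-\eta_k(z_n))$ via the constant net $v_\beta:=\eta_k(0)$ (legitimised by Prop. \ref{extension} and Prop. \ref{pakistan}), handle the $L^2$ part by Th. \ref{spectralth} and \eqref{equality}, and expand the $\overline{\partial}$ part into three terms each converging to $\lambda_k(0)$ using Prop. \ref{soothe} for the diagonal term and the eigenform/adjoint identity for the cross term. The only difference is cosmetic (you group the two conjugate cross terms as $-2\re Q_{z_n}(\eta_k(0),\eta_k(z_n))$ where the paper keeps them separate).
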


\begin{proof}
By Prop. \ref{extension} and \ref{pakistan}, in order to verify Def. \ref{strong}, it is enough to prove that 
\begin{equation}
\label{fury}
\lim_{n\rightarrow \infty}Q_{s_n,H}(\eta_k(0)-\eta_k(z_n),\eta_k(0)-\eta_k(z_n))=0.
\end{equation}
 We have
$$Q_{s_n,H}(\eta_k(0)-\eta_k(z_n),\eta_k(0)-\eta_k(z_n))=\|\eta_k(0)-\eta_k(z_n)\|^2_{L^2\Omega^{m,0}(A,g_{z_n}|_A)}+\|\overline{\partial}_{m,0}\eta_k(0)-\overline{\partial}_{m,0}\eta_k(z_n)\|^2_{L^2\Omega^{m,1}(A,g_{z_n}|_A)}.$$ By Th. \ref{spectralth} we know that $\|\eta_k(0)-\eta_k(z_n)\|^2_{L^2\Omega^{m,0}(A,g_{z_n}|_A)}\rightarrow 0$  as $n\rightarrow \infty$. For the other term, using Prop. \ref{extension}, we have
\begin{align}
& \nonumber \|\overline{\partial}_{m,0}\eta_k(0)-\overline{\partial}_{m,0}\eta_k(z_n)\|^2_{L^2\Omega^{m,1}(A,g_{z_n}|_A)}=
\langle\overline{\partial}_{m,0}\eta_k(0),\overline{\partial}_{m,0}\eta_k(0)\rangle_{L^2\Omega^{m,1}(A,g_{z_n}|_A)}+\\
&\nonumber  +\langle\overline{\partial}_{m,0}\eta_k(z_n),\overline{\partial}_{m,0}\eta_k(z_n)\rangle_{L^2\Omega^{m,1}(A,g_{z_n}|_A)} -\langle\overline{\partial}_{m,0}\eta_k(0),\overline{\partial}_{m,0}\eta_k(z_n)\rangle_{L^2\Omega^{m,1}(A,g_{z_n}|_A)}+\\
&\nonumber -\langle\overline{\partial}_{m,0}\eta_k(z_n),\overline{\partial}_{m,0}\eta_k(0)\rangle_{L^2\Omega^{m,1}(A,g_{z_n}|_A)}= \langle\overline{\partial}_{m,0}\eta_k(0),\overline{\partial}_{m,0}\eta_k(0)\rangle_{L^2\Omega^{m,1}(A,g_{z_n}|_A)}+\\
& \label{edde} +\lambda_{k}(z_n) -\lambda_{k}(z_n)\langle\eta_k(0),\eta_k(z_n)\rangle_{L^2\Omega^{m,0}(A,g_{z_n}|_A)}-\lambda_k(z_n)\langle\eta_k(z_n),\eta_k(0)\rangle_{L^2\Omega^{m,0}(A,g_{z_n}|_A)}.
\end{align}
By Prop. \ref{soothe} we know that $$\langle\overline{\partial}_{m,0}\eta_k(0),\overline{\partial}_{m,0}\eta_k(0)\rangle_{L^2\Omega^{m,1}(A,g_{z_n}|_A)}\rightarrow \langle\overline{\partial}_{m,0,\max}\eta_k(0),\overline{\partial}_{m,0,\max}\eta_k(0)\rangle_{L^2\Omega^{m,1}(A,h|_A)}=\lambda_k(0)$$ as $n\rightarrow \infty$.
Moreover by  Th. \ref{spectralth} we know that both $\lambda_k(z_n)\rightarrow \lambda_k(0)$ and $\lambda_k(z_n)\langle\eta_k(z_n),\eta_k(0)\rangle_{L^2\Omega^{m,0}(A,g_{z_n}|_A)}\rightarrow \lambda_k(0)$ as $n\rightarrow \infty$. 
In conclusion  \eqref{edde} tends to zero as $n\rightarrow \infty$. This shows that \eqref{fury} holds true and so the proof is concluded.
\end{proof}

Now we deal with the  convergence of eigenspaces.  Consider the operator \eqref{deg} and let $$E_1(0),E_2(0),...,E_k(0),...$$ be its eigenspaces with $m_1(0),m_2(0),...,m_k(0),...$ as corresponding multiplicities. Above we have listed the eigenspaces of \eqref{deg} in increasing order with respect to the corresponding eigenvalues, that is given any $\omega\in E_i(0)$ with $\|\omega\|^2_{L^2\Omega^{m,0}(A,h|_A)}=1$ and $\eta\in E_j(0)$ with $\|\eta\|^2_{L^2\Omega^{m,0}(A,h|_A)}=1$ we have $i<j$ if and only if $\|\overline{\partial}_{m,0,\max}\omega\|^2_{L^2\Omega^{m,1}(A,h|_A)}<\|\overline{\partial}_{m,0,\max}\eta\|^2_{L^2\Omega^{m,1}(A,h|_A)}$. Let $P_{E_k(0)}:L^2\Omega^{m,0}(A,h|_A)\rightarrow E_k(0)$ be the corresponding orthogonal projection. Analogously, for each $s\in (0,1]$, consider  the operator \eqref{nondeg} and let $\{\eta_1(s),...,\eta_k(s),...\}$ be any orthonormal base of $L^2\Omega^{m,0}(A,h|_A)$ made by eigensections of \eqref{nondeg} with corresponding eigenvalues $0\leq \lambda_1(s)\leq \lambda_2(s)\leq...\leq \lambda_k(s)\leq...$ . Let $H_k(s)$ be the subspace of $L^2\Omega^{m,0}(A,h|_A)$ defined by 
\begin{equation}
\label{butter}
H_{k}(s):=\text{span}\{\eta_j(s):m_1(0)+...+m_{k-1}(0)<j\leq m_1(0)+...+m_k(0)\}.
\end{equation}
Let $P_{H_k(s)}:L^2\Omega^{m,0}(A,h|_A)\rightarrow H_k(s)$ be the corresponding orthogonal projection. 
\begin{cor}
\label{uniform}
In the setting described above we have
\begin{equation}
\label{sunga}
\lim_{s\rightarrow 0}\|P_{E_k(0)}-P_{H_k(s)}\|_{B(L^2\Omega^{m,0}(A,h|_A))}=0
\end{equation}
that is $P_{H_k(s)}$ converges to $P_{E_k(0)}$  as $s\rightarrow 0$ in the uniform (or norm) operator topology.
\end{cor}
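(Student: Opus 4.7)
The plan is to combine the subsequential strong convergence of eigenforms supplied by Theorem \ref{spectralth} with a subsequence-of-subsequence argument, exploiting the fact that the orthogonal projection onto an eigenspace is independent of the choice of orthonormal basis for that eigenspace.

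First, I would reduce \eqref{sunga} to the following claim: for every sequence $\{s_n\}\subset (0,1]$ with $s_n\to 0$, some subsequence of $\|P_{H_k(s_n)}-P_{E_k(0)}\|_{B(L^2\Omega^{m,0}(A,g_0|_A))}$ tends to zero. This suffices to force $\|P_{H_k(s)}-P_{E_k(0)}\|_{B(L^2\Omega^{m,0}(A,g_0|_A))}\to 0$ as $s\to 0$, since otherwise a contradicting sequence $\{s_n\}$ with $\|P_{H_k(s_n)}-P_{E_k(0)}\|\geq\varepsilon$ would exist and admit no subsequence converging to zero.

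Given such a sequence $\{s_n\}$, I would apply Theorem \ref{spectralth} to extract a subsequence $\{z_n\}\subset \{s_n\}$ and an orthonormal basis $\{\eta_1(0),\eta_2(0),\ldots\}$ of $L^2\Omega^{m,0}(A,g_0|_A)$ consisting of eigenforms of $\Delta_{\overline{\partial},m,0,\abs}$ with eigenvalues $\lambda_j(0)$, such that $\eta_j(z_n)\to \eta_j(0)$ in $L^2\Omega^{m,0}(A,g_0|_A)$ for every $j$. The crucial observation is that the $m_k(0)$ eigenforms $\eta_j(0)$ indexed by $m_1(0)+\cdots+m_{k-1}(0)<j\leq m_1(0)+\cdots+m_k(0)$ are orthonormal and all have eigenvalue equal to the $k$-th distinct eigenvalue of $\Delta_{\overline{\partial},m,0,\abs}$, so by dimension count they form an orthonormal basis of $E_k(0)$. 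Writing both $P_{H_k(z_n)}$ and $P_{E_k(0)}$ as finite sums of rank one projectors built from the matching orthonormal systems $\{\eta_j(z_n)\}_j$ and $\{\eta_j(0)\}_j$, a straightforward triangle inequality yields, for every unit vector $v\in L^2\Omega^{m,0}(A,g_0|_A)$,
\[
\|(P_{H_k(z_n)}-P_{E_k(0)})v\|_{L^2\Omega^{m,0}(A,g_0|_A)}\leq 2\sum_{j}\|\eta_j(z_n)-\eta_j(0)\|_{L^2\Omega^{m,0}(A,g_0|_A)},
\]
where the sum runs over the $m_k(0)$ indices defining $H_k(z_n)$. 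The right-hand side is a finite sum of terms tending to zero, giving the required operator norm convergence along the subsequence $\{z_n\}$.

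The main subtlety is that different subsequences of $\{s_n\}$ extracted from Theorem \ref{spectralth} may produce genuinely different orthonormal bases of the eigenspace $E_k(0)$, so there is no canonical limit of the eigenforms $\eta_j(z_n)$. The subsequence argument nevertheless closes because the projection $P_{E_k(0)}$ depends only on the eigenspace, not on the basis; hence every subsequence has a further subsequence along which the projections converge in operator norm to the same limit $P_{E_k(0)}$, and \eqref{sunga} follows.
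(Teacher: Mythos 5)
Your proposal is correct and follows essentially the same route as the paper: extract via Theorem \ref{spectralth} a subsequence along which the eigenforms converge, observe that the limiting eigenforms indexed by $m_1(0)+\cdots+m_{k-1}(0)<j\leq m_1(0)+\cdots+m_k(0)$ form an orthonormal basis of $E_k(0)$, bound $\|(P_{H_k(z_n)}-P_{E_k(0)})v\|$ by $2\sum_j\|\eta_j(z_n)-\eta_j(0)\|$ via the triangle inequality, and conclude with the subsequence-contradiction argument using the basis-independence of $P_{E_k(0)}$. No gaps to report.
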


\begin{proof}
Let $\{s_n\}_{n\in \mathbb{N}}\subset (0,1]$ be any sequence with $s_n\rightarrow 0$ as $n\rightarrow \infty$. Thanks to Th. \ref{spectralth} we known that there exists a subsequence $\{z_n\}_{n\in \mathbb{N}}\subset \{s_n\}_{n\in \mathbb{N}}$ such that $\eta_j(z_n)\rightarrow \eta_j(0)$ in $L^2\Omega^{m,0}(A,h|_A)$ as $n\rightarrow \infty$ with $\{\eta_1(0),...,\eta_k(0),...\}$ an orthonormal basis of $L^2\Omega^{m,0}(A,h|_A)$ made by eigenforms of \eqref{deg} with corresponding eigenvalues $0\leq\lambda_1(0)\leq \lambda_2(0)\leq...\leq \lambda_k(0)\leq...$ . Note that the set of eigenforms $\{\eta_j(0): m_1(0)+...+m_{k-1}(0)<j\leq m_1(0)+...+m_{k}(0)\}$ is an orthonormal basis of $E_k(0)$.
Consider now any form  $\omega\in L^2\Omega^{m,0}(A,h|_A)$ with $\|\omega\|_{L^2\Omega^{m,0}(A,h|_A)}=1$. Let us define  $e_{k-1}:=m_1(0)+...+m_{k-1}(0)$, $e_k:=m_1(0)+...+m_{k}(0)$, $a_j(0):=\langle\omega,\eta_j(0)\rangle_{L^2\Omega^{m,0}(A,h|_A)}$ and  $a_j(z_n):=\langle\omega,\eta_j(z_n)\rangle_{L^2\Omega^{m,0}(A,h|_A)}$. Then we have 
\begin{align}
\nonumber & \|P_{E_k(0)}\omega-P_{H_k(s)}\omega\|_{L^2\Omega^k(A,h|_A)}=\|\sum_{j=e_{k-1}+1}^{e_k} a_j(0)\eta_j(0)-\sum_{j=e_{k-1}+1}^{e_k} a_j(z_n)\eta_j(z_n)\|_{L^2\Omega^{m,0}(A,h|_A)}\leq\\
&\nonumber  \sum_{j=e_{k-1}+1}^{e_k}\| a_j(0)\eta_j(0)-a_j(z_n)\eta_j(z_n)\|_{L^2\Omega^{m,0}(A,h|_A)}=\\
& \nonumber \sum_{j=e_{k-1}+1}^{e_k}\| a_j(0)\eta_j(0)-a_j(0)\eta_j(z_n)+a_j(0)\eta_j(z_n)-a_j(z_n)\eta_j(z_n)\|_{L^2\Omega^{m,0}(A,h|_A)}\leq\\
& \nonumber \sum_{j=e_{k-1}+1}^{e_k}\| a_j(0)\eta_j(0)-a_j(0)\eta_j(z_n)\|_{L^2\Omega^{m,0}(A,h|_A)}+\sum_{j=e_{k-1}+1}^{e_k}\|a_j(0)\eta_j(z_n)-a_j(z_n)\eta_j(z_n)\|_{L^2\Omega^{m,0}(A,h|_A)}\leq\\
& \nonumber  \sum_{j=e_{k-1}+1}^{e_k}\| \eta_j(0)-\eta_j(z_n)\|_{L^2\Omega^{m,0}(A,h|_A)}+\sum_{j=e_{k-1}+1}^{e_k} |a_j(0)-a_j(z_n)|\leq 2\sum_{j=e_{k-1}+1}^{e_k}\| \eta_j(0)-\eta_j(z_n)\|_{L^2\Omega^{m,0}(A,h|_A)}
\end{align}
as $|a_j(0)|\leq 1$ and $|a_j(0)-a_j(z_n)|$ $=|\langle\omega,\eta_j(0)\rangle_{L^2\Omega^{m,0}(A,h|_A)}-\langle\omega,\eta_j(z_n)\rangle_{L^2\Omega^{m,0}(A,h|_A)}|$ $=|\langle\omega,\eta_j(0)-\eta_j(z_n)\rangle_{L^2\Omega^{m,0}(A,h|_A)}|$ $\leq \| \eta_j(0)-\eta_j(z_n)\|_{L^2\Omega^{m,0}(A,h|_A)}$.
Therefore $$\|P_{E_k(0)}-P_{H_k(z_n)}\|_{B(L^2\Omega^{m,0}(A,h|_A))}\leq 2\sum_{j=e_{k-1}+1}^{e_k}\| \eta_j(0)-\eta_j(z_n)\|_{L^2\Omega^{m,0}(A,h|_A)}$$ and so we have $$0\leq\lim_{n\rightarrow \infty}\|P_{E_k(0)}-P_{H_k(z_n)}\|_{B(L^2\Omega^{m,0}(A,h|_A))}\leq \lim_{n\rightarrow \infty}2\sum_{j=e_{k-1}+1}^{e_k}\| \eta_j(0)-\eta_j(z_n)\|_{L^2\Omega^{m,0}(A,h|_A)}=0.$$
Assume now that \eqref{sunga} does not hold true. Then there exists a constant $\epsilon>0$ and a sequence $\{s_n\}_{n\in \mathbb{N}}\subset (0,1]$, $s_n\rightarrow 0$ as $n\rightarrow \infty$, such that 
\begin{equation}
\label{sungam}
\lim_{n\rightarrow \infty}\|P_{E_k(0)}-P_{H_k(s_n)}\|_{B(L^2\Omega^{m,0}(A,h|_A))}>\epsilon.
\end{equation}
On the other hand, according to what we have shown above, we can find a subsequence $\{z_n\}_{n\in \mathbb{N}}\subset \{s_n\}_{n\in \mathbb{N}}$ such that 
\begin{equation}
\label{sungamm}
\lim_{n\rightarrow \infty}\|P_{E_k(0)}-P_{H_k(z_n)}\|_{B(L^2\Omega^{m,0}(A,h|_A))}=0
\end{equation}
which clearly contradicts \eqref{sungam}. We can therefore conclude that \eqref{sunga} holds true as desired. 
\end{proof}

\vspace{1 cm}

Now we continue by studying the convergence of the heat operators associated to the family $g_s$.
For each $t\in (0,\infty)$ and $s\in (0,1]$ let 
\begin{equation}
\label{heatnondeg}
e^{-t\Delta_{\overline{\partial},m,0,s}}:L^2\Omega^{m,0}(M,g_s)\rightarrow L^2\Omega^{m,0}(M,g_s)
\end{equation}
be the heat operator associated to \eqref{nondeg}. It is a classical result of elliptic theory on compact manifolds that \eqref{heatnondeg} is a trace class operator. Let
\begin{equation}
\label{heatdeg}
e^{-t\Delta_{\overline{\partial},m,0,\mathrm{abs}}}:L^2\Omega^{m,0}(A,h|_A)\rightarrow L^2\Omega^{m,0}(A,h|_A)
\end{equation}
be the heat operator associated to \eqref{deg}. Thanks to \cite{FBei} Cor. 4.2 we know that \eqref{heatdeg} is trace class too. Let us label with $\Tr(e^{-t\Delta_{\overline{\partial},m,0,s}})$ and $\Tr(e^{-t\Delta_{\overline{\partial},m,0,\mathrm{abs}}})$ the trace of \eqref{heatnondeg} and \eqref{heatdeg}, respectively. We recall that $\Tr(e^{-t\Delta_{\overline{\partial},m,0,s}})=\sum_ke^{-t\lambda_k(s)}$ and analogously $\Tr(e^{-t\Delta_{\overline{\partial},m,0,\mathrm{abs}}})=\sum_ke^{-t\lambda_k(0)}$. Moreover both  $\Tr(e^{-t\Delta_{\overline{\partial},m,0,\mathrm{abs}}})$ and $\Tr(e^{-t\Delta_{\overline{\partial},m,0,s}})$, the latter for any fixed $s\in (0,1]$,  are $C^{\infty}$ functions on $(0,\infty)$. Furthermore we recall the well known fact that, given any separable Hilbert space $H$, the space of trace-class operators, here denoted by $B_1(H)$, is a Banach space with norm $\|A\|_{B_1(H)}:=\Tr|A|$. We have now all the ingredients for the following

\begin{teo}
\label{tracenorm}
Let $t_0\in (0,\infty)$ be arbitrarily fixed. Then $$\lim_{s\rightarrow0}\sup_{t\in [t_0,\infty)}\Tr|e^{-t\Delta_{\overline{\partial},m,0,s}}-e^{-t\Delta_{\overline{\partial},m,0,\mathrm{abs}}}|=0.$$
 Equivalently $e^{-t\Delta_{\overline{\partial},m,0,s}}$ converges to $e^{-t\Delta_{\overline{\partial},m,0,\mathrm{abs}}}$  as $s\rightarrow 0$ with respect to the trace-class norm and uniformly on $[t_0,\infty)$.
\end{teo}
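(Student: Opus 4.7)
My plan is to combine the eigenform convergence of Theorem \ref{spectralth} with a uniform heat-trace tail estimate obtained by a min-max comparison against the smooth reference metric $g_{1}$. Since by \eqref{stesso} all the operators involved act on the same Hilbert space $H:=L^{2}\Omega^{m,0}(A,g_{0}|_{A})$, I would expand the two heat operators in their spectral decompositions
\begin{equation*}
e^{-t\Delta_{\overline{\partial},m,0,s}}=\sum_{k}e^{-t\lambda_{k}(s)}\langle\cdot,\eta_{k}(s)\rangle_{H}\,\eta_{k}(s),\qquad e^{-t\Delta_{\overline{\partial},m,0,\mathrm{abs}}}=\sum_{k}e^{-t\lambda_{k}(0)}\langle\cdot,\eta_{k}(0)\rangle_{H}\,\eta_{k}(0),
\end{equation*}
take the termwise difference, and apply the elementary rank-one trace-norm inequality
\begin{equation*}
\bigl\|\,a\,\langle\cdot,u\rangle_{H}\,u-b\,\langle\cdot,v\rangle_{H}\,v\,\bigr\|_{B_{1}(H)}\leq|a-b|+2|b|\,\|u-v\|_{H},\qquad\|u\|_{H}=\|v\|_{H}=1.
\end{equation*}

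The key preliminary is a uniform lower bound on the spectrum. By Proposition \ref{pakistan}(3) one has the continuous inclusion $(\mathcal{D}(Q_{s}),Q_{s,H})\hookrightarrow(\mathcal{D}(Q_{1}),Q_{1,H})$ for every $s\in[0,1]$; combining this with the equality of $L^{2}\Omega^{m,0}$-norms from \eqref{equality} and the min-max principle gives $\lambda_{k}(s)\geq\lambda_{k}(1)/\nu$ for every $s\in[0,1]$ and every $k$, with $\nu$ as in Proposition \ref{patolax}. Since $\lambda_{k}(1)\rightarrow\infty$ by Weyl's law on the compact Hermitian manifold $(M,g_{1})$, the series $\sum_{k}e^{-t_{0}\lambda_{k}(1)/\nu}$ converges, so that for every $\epsilon>0$ there exists $N\in\mathbb{N}$ with
\begin{equation*}
\sum_{k>N}\bigl(e^{-t\lambda_{k}(s)}+e^{-t\lambda_{k}(0)}\bigr)\leq 2\sum_{k>N}e^{-t_{0}\lambda_{k}(1)/\nu}<\epsilon\quad\text{for all}\quad s\in[0,1],\ t\geq t_{0}.
\end{equation*}

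I would then argue by contradiction: were the conclusion false, there would exist $\delta>0$, $s_{n}\rightarrow 0$ and $t_{n}\geq t_{0}$ with $\Tr|e^{-t_{n}\Delta_{\overline{\partial},m,0,s_{n}}}-e^{-t_{n}\Delta_{\overline{\partial},m,0,\mathrm{abs}}}|>\delta$. Theorem \ref{spectralth} produces a subsequence $z_{n}$ along which $\eta_{k}(z_{n})\rightarrow\eta_{k}(0)$ in $H$ for every $k$. Summing the rank-one bound, the tail contribution from indices $k>N$ is at most $\epsilon$, uniformly in $t\geq t_{0}$. For each of the finitely many $k\leq N$ with $\lambda_{k}(0)>0$, the mean value theorem combined with the lower bound $\lambda_{k}(z_{n})\geq\lambda_{k}(0)/2$ (valid for $n$ large) yields
\begin{equation*}
|e^{-t\lambda_{k}(z_{n})}-e^{-t\lambda_{k}(0)}|\leq te^{-t\lambda_{k}(0)/2}|\lambda_{k}(z_{n})-\lambda_{k}(0)|\leq\frac{2}{e\,\lambda_{k}(0)}|\lambda_{k}(z_{n})-\lambda_{k}(0)|,
\end{equation*}
bounded \emph{uniformly in} $t>0$ and tending to zero by Theorem \ref{spectralth}, while the factor $e^{-t\lambda_{k}(0)}\leq 1$ combines with $\|\eta_{k}(z_{n})-\eta_{k}(0)\|\rightarrow 0$ to control the remaining summand. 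The zero-eigenvalue indices are handled by the Dolbeault-Hodge identification $\ker\Delta_{\overline{\partial},m,0,s}\cong H^{0}(M,K_{M})$, valid for every $s\in[0,1]$, so that $\lambda_{k}(z_{n})=\lambda_{k}(0)=0$ and only the norm-difference $2\|\eta_{k}(z_{n})-\eta_{k}(0)\|\rightarrow0$ survives. Adding the $\epsilon$-tail and the $o(1)$ finite sum gives $\limsup_{n}\Tr|\cdot|\leq\epsilon$, contradicting $\delta>0$.

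The main obstacle is the \emph{uniformity in} $t\in[t_{0},\infty)$, since a naive continuity argument at fixed $t$ does not suffice over the unbounded interval. The resolution rests on the uniform bound $\sup_{t>0}te^{-tc}=1/(ec)$ applied with $c=\lambda_{k}(0)/2>0$, together with the kernel-dimension preservation under degeneration that neutralizes the only indices where $\lambda_{k}(0)=0$.
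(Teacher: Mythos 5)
Your proposal is correct and follows the same overall skeleton as the paper's proof (uniform tail control via comparison with $g_1$, subsequence extraction of convergent eigenforms from Th.~\ref{spectralth}, splitting into a finite block plus tails, and the kernel identification of Prop.~\ref{nunu} to neutralize the zero modes), but the technical execution differs in two genuine ways. First, where the paper decomposes $e^{-t\Delta_{\overline{\partial},m,0,z_n}}-e^{-t\Delta_{\overline{\partial},m,0,\mathrm{abs}}}$ by composing with the projections $P_{\overline{k}}$ and $Q_{\overline{k}}=\id-P_{\overline{k}}$ and then telescopes through several trace identities, you estimate the difference termwise via the rank-one inequality $\|a\langle\cdot,u\rangle u-b\langle\cdot,v\rangle v\|_{B_1}\leq |a-b|+2|b|\,\|u-v\|$, which is correct and shortens the bookkeeping considerably. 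Second, for uniformity in $t\in[t_0,\infty)$ the paper splits the interval at an auxiliary time $t_1$, using the $1$-Lipschitz bound on $[t_0,t_1]$ and the decay $2e^{-\frac{t}{\nu}\lambda_k(1)}$ on $[t_1,\infty)$, whereas you get a single $t$-independent bound from $\sup_{t>0}te^{-tc}=1/(ec)$ applied with $c=\lambda_k(0)/2$; both devices work, and both rely in the same essential way on $\lambda_{\ell+1}(0)>0$, i.e.\ on Prop.~\ref{nunu}, which you assert via the Dolbeault--Hodge identification rather than prove (the nontrivial half, that $L^2$ holomorphic $(m,0)$-forms on $A$ with respect to $g_0$ extend to $M$, is exactly what the paper's Prop.~\ref{nunu} establishes using \cite{FBei}). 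One small imprecision: the bound $\lambda_k(s)\geq\lambda_k(1)/\nu$ follows from the min-max principle together with \eqref{equality} and the inequality \eqref{fiacco} of Prop.~\ref{patolax} on the forms $Q_s$ themselves, not from the graph-norm inequality of Prop.~\ref{pakistan}(3) as cited; this is how the paper obtains \eqref{minmax} and does not affect the validity of your argument.
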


In order to prove the above theorem we need the following property.
\begin{prop}
\label{nunu}
For any $s\in (0,1]$ we have $\ker(\Delta_{\overline{\partial},m,0,s})=\ker(\Delta_{\overline{\partial},m,0,\mathrm{abs}})$.
\end{prop}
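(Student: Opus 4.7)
The plan is to reduce both kernels to kernels of $\overline{\partial}$-type operators and then observe that, once reduced to a purely distributional condition on $A$, the metric plays no role.

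First I would note that for any non-negative self-adjoint operator of the form $T^*\circ T$, an element $\omega$ lies in the kernel if and only if $\omega\in\mathcal{D}(T)$ with $T\omega=0$ (via the polarization $\langle T^*T\omega,\omega\rangle=\|T\omega\|^2$; conversely if $T\omega=0$ then $T\omega=0\in\mathcal{D}(T^*)$ trivially). Applied to $\Delta_{\overline{\partial},m,0,s}=\overline{\partial}_{m,0}^{t,s}\circ\overline{\partial}_{m,0}$ and to $\Delta_{\overline{\partial},m,0,\mathrm{abs}}=\overline{\partial}_{m,0,\max}^*\circ\overline{\partial}_{m,0,\max}$, this yields
\[
\ker(\Delta_{\overline{\partial},m,0,s})=\ker\bigl(\overline{\partial}_{m,0}\colon L^2\Omega^{m,0}(M,g_s)\to L^2\Omega^{m,1}(M,g_s)\bigr)
\]
and
\[
\ker(\Delta_{\overline{\partial},m,0,\mathrm{abs}})=\ker\bigl(\overline{\partial}_{m,0,\max}\colon L^2\Omega^{m,0}(A,g_0|_A)\to L^2\Omega^{m,1}(A,g_0|_A)\bigr).
\]

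Next I would invoke the discussion around \eqref{boxuomo}--\eqref{urcau}: for $s\in(0,1]$ the operator $\overline{\partial}_{m,0}:\Omega_c^{m,0}(A)\to\Omega_c^{m,1}(A)$ has a unique closed extension with respect to $g_s|_A$, hence its maximal and minimal closed extensions on $(A,g_s|_A)$ coincide with the closed extension of $\overline{\partial}_{m,0}$ on $(M,g_s)$. In particular an element $\omega\in L^2\Omega^{m,0}(M,g_s)$ belongs to the kernel of $\overline{\partial}_{m,0}$ if and only if $\omega$, viewed on $A$, satisfies $\overline{\partial}\omega=0$ in the distributional sense on $A$.

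Finally I would use the metric-free equality of Hilbert spaces $L^2\Omega^{m,0}(M,g_s)=L^2\Omega^{m,0}(A,g_s|_A)=L^2\Omega^{m,0}(A,g_0|_A)$ from \eqref{stesso}, together with the definition of the maximal extension: $\omega\in\mathcal{D}(\overline{\partial}_{m,0,\max})$ with $\overline{\partial}_{m,0,\max}\omega=0$ means exactly $\omega\in L^2\Omega^{m,0}(A,g_0|_A)$ and $\overline{\partial}\omega=0$ distributionally on $A$ (the $L^2$-integrability of the image is automatic since it is zero). The two conditions characterizing the two kernels are therefore identical, and the equality $\ker(\Delta_{\overline{\partial},m,0,s})=\ker(\Delta_{\overline{\partial},m,0,\mathrm{abs}})$ follows.

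There is no real obstacle: the assertion boils down to the observation that the distributional equation $\overline{\partial}\omega=0$ is independent of the Hermitian metric and that on $A$ the $L^2$-space for $g_s|_A$ coincides with that for $g_0|_A$ in bidegree $(m,0)$. The one point to handle carefully is to cite both the uniqueness of the closed extension on $(A,g_s|_A)$ and the equality of $L^2$-spaces \eqref{equality}/\eqref{stesso}, so that the two kernels are compared on the same ambient Hilbert space.
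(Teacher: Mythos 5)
Your proposal is correct and follows essentially the same route as the paper: both arguments reduce the two kernels to kernels of $\overline{\partial}$-type operators and both rest on the same crucial input, namely the coincidence (from \cite{FBei} Prop.\ 3.2, recalled around \eqref{boxuomo}--\eqref{urcau}) of the maximal/minimal extensions on $(A,g_s|_A)$ with the unique closed extension on $(M,g_s)$, together with the equality of $L^2$-spaces in bidegree $(m,0)$. The only cosmetic difference is that the paper phrases the final identification through the spaces of holomorphic forms $\mathcal{K}_M(M)$ and $\mathcal{K}_M(A)\cap L^2\Omega^{m,0}(A,g_s|_A)$, whereas you stay at the level of the metric-independent distributional equation $\overline{\partial}\omega=0$ on $A$.
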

\begin{proof}
It is clear that $\ker(\Delta_{\overline{\partial},m,0,s})=\mathcal{K}_M(M)$, with $\mathcal{K}_M(M)$ the space of global sections of the canonical sheaf of $M$ or equivalently the space of holomorphic $(m,0)$-forms on $M$. Consider now $\ker(\Delta_{\overline{\partial},m,0,\mathrm{abs}})$. Then $\ker(\Delta_{\overline{\partial},m,0,\mathrm{abs}})=\ker (\overline{\partial}_{m,0,\max})$ where the latter operator is defined in \eqref{maxx}. By elliptic regularity we have $\ker (\overline{\partial}_{m,0,\max})$ $\subset \Omega^{m,0}(A)$ and by \eqref{equality} we know that $L^2\Omega^{m,0}(A,h|_A)=L^2\Omega^{m,0}(A,g_s|_A)$ for any $s\in (0,1]$. Altogether this shows that $\ker (\overline{\partial}_{m,0,\max})=\{\omega\in \mathcal{K}_M(A)\cap L^2\Omega^{m,0}(A,g_s|_A)\}$. It is therefore immediate to check that $\mathcal{K}_M(M)\subseteq \ker (\overline{\partial}_{m,0,\max})$. Conversely, by \cite{FBei} Prop. 3.2, we have $\{\omega\in \mathcal{K}_M(A)\cap L^2\Omega^{m,0}(A,g_s|_A)\}$ $\subseteq \ker(\overline{\partial}_{m,0})$ where the latter space is the kernel of \eqref{boxuomo}, that is the unique $L^2$ closed extension  of $\overline{\partial}_{m,0}:\Omega^{m,0}(M)\rightarrow \Omega^{m,1}(M)$ with respect to $g_s$. In this way we have $\{\omega\in \mathcal{K}_M(A)\cap L^2\Omega^{m,0}(A,g_s|_A)\}\subseteq \mathcal{K}_M(M)$ as $ \ker(\overline{\partial}_{m,0})=\mathcal{K}_M(M)$.  In conclusion $\mathcal{K}_M(M)=\ker (\overline{\partial}_{m,0,\max})$ and so $\ker(\Delta_{\overline{\partial},m,0,s})=\ker(\Delta_{\overline{\partial},m,0,\mathrm{abs}})$ for any $s\in (0,1]$ as required.
\end{proof}

Now we can prove Th. \ref{tracenorm}.

\begin{proof}
We start with a preliminary remark. As recalled above we know that $\Tr(e^{-t\Delta_{\overline{\partial},m,0,s}})=\sum_ke^{-t\lambda_k(s)}<\infty$.  Let $t_0\in (0,\infty)$ be arbitrarily fixed and let $\nu\in \mathbb{R}$ be as in Prop. \ref{patolax}. Then by \eqref{fiacco} and Th. 4.2 in \cite{FBei} we have 
\begin{equation}
\label{minmax}
\nu\lambda_k(s)\geq \lambda_k(1)
\end{equation}
 for every $s\in [0,1]$ and every positive integer $k$. Therefore for each $\epsilon>0$ and $t_0>0$ arbitrarily fixed there exists $\overline{k}\in \mathbb{N}$ such that 
\begin{equation}
\label{carabrone}
\sum_{k=\overline{k}+1}^{\infty}e^{-t\lambda_k(s)}<\epsilon
\end{equation}
for each $s\in [0,1]$ and $t\in [t_0,\infty)$. From now on let $0<\epsilon<1$ and $t_0\in (0,\infty)$ be arbitrarily fixed. In the rest of the proof we will always assume that $t\in [t_0,\infty]$. Let $\{s_n\}\subset (0,1]$ be any sequence such that $s_n\rightarrow 0$ as $n\rightarrow \infty$. Let  $\{\eta_{1}(s_n),\eta_{2}(s_n),...,\eta_{k}(s_n),...\}$ be any orthonormal basis of $L^2\Omega^{m,0}(A,g_{s_n}|_A)$ made by eigenforms of \eqref{nondeg} with corresponding eigenvalues $\{\lambda_{1}(s_n),\lambda_{2}(s_n),...,\lambda_{k}(s_n),...\}$. Thanks to Th. \ref{spectralth} we know that there exists a subsequence $\{z_n\}\subset \{s_n\}$ and $\{\eta_{1}(0),\eta_{2}(0),...,\eta_{k}(0),...\}$, an orthonormal basis of $L^2\Omega^{m,0}(A,h|_A)$ made by eigenforms of \eqref{deg} with corresponding eigenvalues $\{\lambda_{1}(0),\lambda_{2}(0),$ $...,\lambda_{k}(0),...\}$, such that $\eta_{k}(z_n)\rightarrow \eta_k(0)$ as $n\rightarrow \infty$ in  $L^2\Omega^{m,0}(A,h|_A)$ for each positive integer $k$. Let us fix a positive integer $\overline{k}$ such that \eqref{carabrone} holds true.  Let $P_{\overline{k}}:L^2\Omega^{m,0}(A,h|_A)\rightarrow L^2\Omega^{m,0}(A,h|_A)$ be the orthogonal projection on the subspace generated by $\{\eta_1(0),...,\eta_{\overline{k}}(0)\}$ and let $Q_{\overline{k}}:=\id-P_{\overline{k}}$, with $\id:L^2\Omega^{m,0}(A,h|_A)\rightarrow L^2\Omega^{m,0}(A,h|_A)$ the identity.  We have
\begin{align}
\nonumber & \Tr|e^{-t\Delta_{\overline{\partial},m,0,z_{n}}}-e^{-t\Delta_{\overline{\partial},m,0,\mathrm{abs}}}|=\Tr|(e^{-t\Delta_{\overline{\partial},m,0,z_{n}}}-e^{-t\Delta_{\overline{\partial},m,0,\mathrm{abs}}})\circ (P_{\overline{k}}+Q_{\overline{k}})|\leq\\
& \nonumber  \Tr|(e^{-t\Delta_{\overline{\partial},m,0,z_{n}}}-e^{-t\Delta_{\overline{\partial},m,0,\mathrm{abs}}})\circ P_{\overline{k}}|+\Tr|(e^{-t\Delta_{\overline{\partial},m,0,z_{n}}}-e^{-t\Delta_{\overline{\partial},m,0,\mathrm{abs}}})\circ Q_{\overline{k}}|
\end{align}
Clearly 
\begin{equation}
\label{falco}
\Tr|(e^{-t\Delta_{\overline{\partial},m,0,z_{n}}}-e^{-t\Delta_{\overline{\partial},m,0,\mathrm{abs}}})\circ P_{\overline{k}}|=\sum_{k=1}^{\overline{k}}\langle|(e^{-t\Delta_{\overline{\partial},m,0,z_n}}-e^{-t\Delta_{\overline{\partial},m,0,\mathrm{abs}}})\circ P_{\overline{k}}|\eta_k(0),\eta_k(0) \rangle_{L^2\Omega^{m,0}(A,h|_A)}
\end{equation}
since $|(e^{-t\Delta_{\overline{\partial},m,0,z_n}}-e^{-t\Delta_{\overline{\partial},m,0,\mathrm{abs}}})\circ P_{\overline{k}}|\eta_k(0)=0$ whenever $k>\overline{k}$. Concerning $\Tr|(e^{-t\Delta_{\overline{\partial},m,0,z_{n}}}-e^{-t\Delta_{\overline{\partial},m,0,\mathrm{abs}}})\circ Q_{\overline{k}}|$ we have
\begin{align}
& \nonumber \Tr|(e^{-t\Delta_{\overline{\partial},m,0,z_{n}}}-e^{-t\Delta_{\overline{\partial},m,0,\mathrm{abs}}})\circ Q_{\overline{k}}|\leq \Tr(e^{-t\Delta_{\overline{\partial},m,0,z_{n}}}\circ Q_{\overline{k}})+\Tr(e^{-t\Delta_{\overline{\partial},m,0,\mathrm{abs}}}\circ Q_{\overline{k}})=\\
& \nonumber \sum_{k=\overline{k}+1}^{\infty}\langle e^{-t\Delta_{\overline{\partial},m,0,\mathrm{abs}}}\eta_k(0),\eta_k(0) \rangle_{L^2\Omega^{m,0}(A,h|_A)}+\Tr(e^{-t\Delta_{\overline{\partial},m,0,z_n}}\circ Q_{\overline{k}})=\\
& \label{dede} \sum_{k=\overline{k}+1}^{\infty}e^{-t\lambda_k(0)}+\Tr(e^{-t\Delta_{\overline{\partial},m,0,z_n}}\circ Q_{\overline{k}}) \leq \epsilon+\Tr(e^{-t\Delta_{\overline{\partial},m,0,z_n}}\circ Q_{\overline{k}})\ (\text{thanks to}\ \eqref{carabrone}).
\end{align}
Concerning $\Tr(e^{-t\Delta_{\overline{\partial},m,0,z_n}}\circ Q_{\overline{k}})$ we have 
\begin{align}
& \nonumber \Tr(e^{-t\Delta_{\overline{\partial},m,0,z_n}}\circ Q_{\overline{k}})=\sum_{k=\overline{k}+1}^{\infty}\langle e^{-t\Delta_{\overline{\partial},m,0,z_n}} \eta_k(0),\eta_k(0) \rangle_{L^2\Omega^{m,0}(A,h|_A)}=\\
& \nonumber \sum_{k=1}^{\infty}\langle e^{-t\Delta_{\overline{\partial},m,0,z_n}} \eta_k(0),\eta_k(0) \rangle_{L^2\Omega^{m,0}(A,h|_A)}-\sum_{k=1}^{\overline{k}}\langle e^{-t\Delta_{\overline{\partial},m,0,z_n}} \eta_k(0),\eta_k(0) \rangle_{L^2\Omega^{m,0}(A,h|_A)}=\\
& \nonumber \Tr(e^{-t\Delta_{\overline{\partial},m,0,z_n}})-\sum_{k=1}^{\overline{k}}\langle e^{-t\Delta_{\overline{\partial},m,0,z_n}} \eta_k(0),\eta_k(0) \rangle_{L^2\Omega^{m,0}(A,h|_A)}=\\
& \nonumber \sum_{k=1}^{\infty}\langle e^{-t\Delta_{\overline{\partial},m,0,z_n}} \eta_k(z_n),\eta_k(z_n) \rangle_{L^2\Omega^{m,0}(A,h|_A)}-\sum_{k=1}^{\overline{k}}\langle e^{-t\Delta_{\overline{\partial},m,0,z_n}} \eta_k(0),\eta_k(0) \rangle_{L^2\Omega^{m,0}(A,h|_A)}=
\end{align}

\begin{align}
& \nonumber \sum_{k=1}^{\overline{k}}\langle e^{-t\Delta_{\overline{\partial},m,0,z_n}} \eta_k(z_n),\eta_k(z_n) \rangle_{L^2\Omega^{m,0}(A,h|_A)}+\sum_{k=\overline{k}+1}^{\infty}\langle e^{-t\Delta_{\overline{\partial},m,0,z_n}} \eta_k(z_n),\eta_k(z_n) \rangle_{L^2\Omega^{m,0}(A,h|_A)}-\\
& \nonumber \sum_{k=1}^{\overline{k}}\langle e^{-t\Delta_{\overline{\partial},m,0,z_n}} \eta_k(0),\eta_k(0) \rangle_{L^2\Omega^{m,0}(A,h|_A)}=\sum_{k=1}^{\overline{k}}\langle e^{-t\Delta_{\overline{\partial},m,0,z_n}} \eta_k(z_n),\eta_k(z_n) \rangle_{L^2\Omega^{m,0}(A,h|_A)}+\\ 
\nonumber & \sum_{k=\overline{k}+1}^{\infty}e^{-t\lambda_k(z_n)}-
 \sum_{k=1}^{\overline{k}}\langle e^{-t\Delta_{\overline{\partial},m,0,z_n}} \eta_k(0),\eta_k(0) \rangle_{L^2\Omega^{m,0}(A,h|_A)}\leq\ (\text{again by}\ \eqref{carabrone}) \\
& \label{derede} \epsilon+ \sum_{k=1}^{\overline{k}}\langle e^{-t\Delta_{\overline{\partial},m,0,z_n}} \eta_k(z_n),\eta_k(z_n) \rangle_{L^2\Omega^{m,0}(A,h|_A)}-
 \sum_{k=1}^{\overline{k}}\langle e^{-t\Delta_{\overline{\partial},m,0,z_n}} \eta_k(0),\eta_k(0) \rangle_{L^2\Omega^{m,0}(A,h|_A)}.
\end{align}

Above we have used the well known property that the trace of a positive self-adjoint trace-class operator is independent on the orthonormal basis. Finally we have
 
\begin{align}
& \nonumber \sum_{k=1}^{\overline{k}}\langle e^{-t\Delta_{\overline{\partial},m,0,z_n}} \eta_k(z_n),\eta_k(z_n) \rangle_{L^2\Omega^{m,0}(A,h|_A)}-\sum_{k=1}^{\overline{k}}\langle e^{-t\Delta_{\overline{\partial},m,0,z_n}} \eta_k(0),\eta_k(0) \rangle_{L^2\Omega^{m,0}(A,h|_A)}=\\
& \nonumber \sum_{k=1}^{\overline{k}}\langle e^{-t\Delta_{\overline{\partial},m,0,z_n}} \eta_k(z_n),\eta_k(0)-\eta_k(0)+\eta_k(z_n) \rangle_{L^2\Omega^{m,0}(A,h|_A)}- \sum_{k=1}^{\overline{k}}\langle e^{-t\Delta_{\overline{\partial},m,0,z_n}} \eta_k(0),\eta_k(0) \rangle_{L^2\Omega^{m,0}(A,h|_A)}=
\end{align}
\begin{align}
\nonumber & \sum_{k=1}^{\overline{k}}\langle e^{-t\Delta_{\overline{\partial},m,0,z_n}} \eta_k(z_n),\eta_k(z_n)-\eta_k(0) \rangle_{L^2\Omega^{m,0}(A,h|_A)}+ \sum_{k=1}^{\overline{k}}\langle e^{-t\Delta_{\overline{\partial},m,0,z_n}}(\eta_k(z_n)- \eta_k(0)),\eta_k(0) \rangle_{L^2\Omega^{m,0}(A,h|_A)}\leq\\
& \nonumber \sum_{k=1}^{\overline{k}}\|e^{-t\Delta_{\overline{\partial},m,0,z_n}} \eta_k(z_n)\|_{L^2\Omega^{m,0}(A,h|_A)}\|\eta_k(z_n)-\eta_k(0) \|_{L^2\Omega^{m,0}(A,h|_A)}+ \sum_{k=1}^{\overline{k}}\|e^{-t\Delta_{\overline{\partial},m,0,z_n}}(\eta_k(z_n)- \eta_k(0))\|_{L^2\Omega^{m,0}(A,h|_A)}\leq\\
& \label{tobrarex} 2\sum_{k=1}^{\overline{k}}\|e^{-t\Delta_{\overline{\partial},m,0,z_n}}\|_{B(L^2\Omega^{m,0}(A,h|_A))}\|\eta_k(z_n)- \eta_k(0)\|_{L^2\Omega^{m,0}(A,h|_A)}.
\end{align}
Above we have denoted by $\|e^{-t\Delta_{\overline{\partial},m,0,z_n}}\|_{B(L^2\Omega^{m,0}(A,h|_A))}$ the norm of the operator $e^{-t\Delta_{\overline{\partial},m,0,z_n}}:L^2\Omega^{m,0}(A,h|_A)\rightarrow L^2\Omega^{m,0}(A,h|_A)$. 
It is clear that   $\|e^{-t\Delta_{\overline{\partial},m,0,s}}\|_{B(L^2\Omega^{m,0}(A,h|_A))}\leq 1$ for any $s\in [0,1]$.
This can be deduced immediately by the fact that, since $e^{-t\Delta_{\overline{\partial},m,0,s}}:L^2\Omega^{m,0}(A,h|_A))\rightarrow L^2\Omega^{m,0}(A,h|_A)$ is positive and self-adjoint, we have $$\|e^{-t\Delta_{\overline{\partial},m,0,s}}\|_{B(L^2\Omega^{m,0}(A,h|_A))}=\sup_{\omega\neq 0}\frac{\langle e^{-t\Delta_{\overline{\partial},m,0,s}}\omega,\omega\rangle_{L^2\Omega^{m,0}(A,h|_A)}}{\|\omega\|^2_{L^2\Omega^{m,0}(A,h|_A)}}.$$
Summarizing, by \eqref{falco}-\eqref{dede}-\eqref{derede}-\eqref{tobrarex}, we showed for the moment that
\begin{align}
& \nonumber \Tr|e^{-t\Delta_{\overline{\partial},m,0,z_{n}}}-e^{-t\Delta_{\overline{\partial},m,0,\mathrm{abs}}}|\leq\sum_{k=1}^{\overline{k}}\langle|(e^{-t\Delta_{\overline{\partial},m,0,z_n}}-e^{-t\Delta_{\overline{\partial},m,0,\mathrm{abs}}})\circ P_{\overline{k}}|\eta_k(0),\eta_k(0) \rangle_{L^2\Omega^{m,0}(A,h|_A)}+\\
& \label{rambaldo} 2\epsilon+2\sum_{k=1}^{\overline{k}}\|\eta_k(z_n)- \eta_k(0)\|_{L^2\Omega^{m,0}(A,h|_A)}.
\end{align}
Concerning $\sum_{k=1}^{\overline{k}}\langle|(e^{-t\Delta_{\overline{\partial},m,0,z_n}}-e^{-t\Delta_{\overline{\partial},m,0,\mathrm{abs}}})\circ P_{\overline{k}}|\eta_k(0),\eta_k(0) \rangle_{L^2\Omega^{m,0}(A,h|_A)}$ we have
\begin{align}
\nonumber & \sum_{k=1}^{\overline{k}}\langle|(e^{-t\Delta_{\overline{\partial},m,0,z_n}}-e^{-t\Delta_{\overline{\partial},m,0,\mathrm{abs}}})\circ P_{\overline{k}}|\eta_k(0),\eta_k(0) \rangle_{L^2\Omega^{m,0}(A,h|_A)}\leq\\
&\nonumber \sum_{k=1}^{\overline{k}}\| |(e^{-t\Delta_{\overline{\partial},m,0,z_n}}-e^{-t\Delta_{\overline{\partial},m,0,\mathrm{abs}}})\circ P_{\overline{k}}|\eta_k(0)\|_{L^2\Omega^{m,0}(A,h|_A)}=\\
& \nonumber \sum_{k=1}^{\overline{k}}\|(e^{-t\Delta_{\overline{\partial},m,0,z_n}}-e^{-t\Delta_{\overline{\partial},m,0,\mathrm{abs}}})(P_{\overline{k}}\eta_k(0))\|_{L^2\Omega^{m,0}(A,h|_A)}=\\
& \nonumber \sum_{k=1}^{\overline{k}}\|(e^{-t\Delta_{\overline{\partial},m,0,z_n}}-e^{-t\Delta_{\overline{\partial},m,0,\mathrm{abs}}})(\eta_k(0))\|_{L^2\Omega^{m,0}(A,h|_A)}= \sum_{k=1}^{\overline{k}}\|e^{-t\Delta_{\overline{\partial},m,0,z_n}}\eta_k(0)-e^{-t\lambda_k(0)}\eta_k(0)\|_{L^2\Omega^{m,0}(A,h|_A)}=\\
& \nonumber \sum_{k=1}^{\overline{k}}\|e^{-t\Delta_{\overline{\partial},m,0,z_n}}(\eta_k(z_n)+\eta_k(0)-\eta_k(z_n))-e^{-t\lambda_k(0)}\eta_k(0)\|_{L^2\Omega^{m,0}(A,h|_A)}\leq\\
& \nonumber \sum_{k=1}^{\overline{k}}\|e^{-t\lambda_k(z_n)}\eta_k(z_n)-e^{-t\lambda_k(0)}\eta_k(0)\|_{L^2\Omega^{m,0}(A,h|_A)}+\sum_{k=1}^{\overline{k}}\|e^{-t\Delta_{\overline{\partial},m,0,z_n}}(\eta_k(0)-\eta_k(z_n))\|_{L^2\Omega^{m,0}(A,h|_A)}\leq\\
& \nonumber \sum_{k=1}^{\overline{k}}\|e^{-t\lambda_k(z_n)}\eta_k(z_n)-e^{-t\lambda_k(z_n)}\eta_k(0)+e^{-t\lambda_k(z_n)}\eta_k(0)-e^{-t\lambda_k(0)}\eta_k(0)\|_{L^2\Omega^{m,0}(A,h|_A)}+\\
\nonumber &\sum_{k=1}^{\overline{k}}\|\eta_k(0)-\eta_k(z_n)\|_{L^2\Omega^{m,0}(A,h|_A)}\leq\\
& \nonumber \sum_{k=1}^{\overline{k}}e^{-t\lambda_k(z_n)}\|\eta_k(z_n)-\eta_k(0)\|_{L^2\Omega^{m,0}(A,h|_A)}+\sum_{k=1}^{\overline{k}}|e^{-t\lambda_k(z_n)}-e^{-t\lambda_k(0)}|+\sum_{k=1}^{\overline{k}}\|\eta_k(0)-\eta_k(z_n)\|_{L^2\Omega^{m,0}(A,h|_A)}\leq\\
& \label{crawl}2\sum_{k=1}^{\overline{k}}\|\eta_k(z_n)-\eta_k(0)\|_{L^2\Omega^{m,0}(A,h|_A)}+\sum_{k=\ell+1}^{\overline{k}}|e^{-t\lambda_k(z_n)}-e^{-t\lambda_k(0)}|.
\end{align}
In \eqref{crawl} we have denoted $\ell:=\dim(\ker(\Delta_{\overline{\partial},m,0,\mathrm{abs}}))$ and we have used Prop. \ref{nunu} for the equality $\sum_{k=\ell+1}^{\overline{k}}|e^{-t\lambda_k(z_n)}-e^{-t\lambda_k(0)}|=\sum_{k=1}^{\overline{k}}|e^{-t\lambda_k(z_n)}-e^{-t\lambda_k(0)}|$. Note that $\ell<\overline{k}$ as $0<\epsilon<1$. Joining \eqref{rambaldo} and \eqref{crawl} we have finally achieved the upper estimate we were looking for:
\begin{equation}
\label{babel}
 \Tr|e^{-t\Delta_{\overline{\partial},m,0,z_{n}}}-e^{-t\Delta_{\overline{\partial},m,0,\mathrm{abs}}}|\leq  \sum_{k=\ell+1}^{\overline{k}}|e^{-t\lambda_k(z_n)}-e^{-t\lambda_k(0)}|+ 2\epsilon+4\sum_{k=1}^{\overline{k}}\|\eta_k(z_n)- \eta_k(0)\|_{L^2\Omega^{m,0}(A,h|_A)}.
\end{equation}

By Th. \ref{spectralth}, \eqref{minmax} and the fact that the function $e^{-x}$ is $1$-Lipschitz on $[0,\infty)$ we can find $t_1$, with $t_0\leq t_1<\infty$ and $n_0>0$ such that: 
\begin{enumerate}
\item $\|\eta_k(z_n)-\eta_k(0)\|_{L^2\Omega^{m,0}(A,h|_A)}\leq \epsilon/\overline{k}$ for any $n>n_0$ and $k=1,...,\overline{k}$, 
\item $|e^{-t\lambda_k(z_n)}-e^{-t\lambda_k(0)}|\leq e^{-t\lambda_k(z_n)}+e^{-t\lambda_k(0)} \leq \epsilon/\overline{k}$ for any $t_1\leq t<\infty$, $n\in \mathbb{N}$ and $k=\ell+1,...,\overline{k}$, 
\item $|e^{-t\lambda_k(z_n)}-e^{-t\lambda_k(0)}|\leq t|\lambda_k(z_n)-\lambda_k(0)|\leq t_1|\lambda_k(z_n)-\lambda_k(0)|\leq \epsilon/\overline{k}$ for any $k=\ell+1,...,\overline{k}$, $n>n_0$ and $t_0\leq t\leq t_1$.
\end{enumerate}
Note that the second point above follows quickly by \eqref{minmax} since $e^{-t\lambda_k(z_n)}+e^{-t\lambda_k(0)} \leq 2e^{-\frac{t}{\nu}\lambda_k(1)}$. Moreover we point out that the second and the third point above yield  $$|e^{-t\lambda_k(z_n)}-e^{-t\lambda_k(0)}|\leq \epsilon/\overline{k}$$ for each  $k=\ell+1,...,\overline{k}$, $t\in [t_0,\infty)$ and $n>n_0$.
Thus, thanks to \eqref{babel} and the above three remarks, we have shown that $$\Tr|e^{-t\Delta_{\overline{\partial},m,0,z_{n}}}-e^{-t\Delta_{\overline{\partial},m,0,\mathrm{abs}}}|\leq 7\epsilon$$ for each $n>n_0$ and $t\in [t_0,\infty)$.
Summarizing we have proved that given any sequence $\{s_n\}\subset (0,1]$ with $s_n\rightarrow 0$ as $n\rightarrow \infty$ there is a subsequence $\{z_n\}\subset \{s_n\}$ such that for any arbitrarily fixed $0<\epsilon<1$ and $t_0>0$ there exists a positive integer $n_0$ such that $$\sup_{t\in[t_0,\infty)}\Tr|e^{-t\Delta_{\overline{\partial},m,0,z_{n}}}-e^{-t\Delta_{\overline{\partial},m,0,\mathrm{abs}}}|\leq 7\epsilon$$ for any $n>n_0$. 
Clearly this amounts to saying that given any sequence $\{s_n\}\subset (0,1]$ with $s_n\rightarrow 0$ as $n\rightarrow \infty$ there is a subsequence $\{z_n\}\subset \{s_n\}$ such that, for any arbitrarily fixed $t_0>0$, we have
\begin{equation}
\label{pocoloco}
\lim_{n\rightarrow \infty}\sup_{t\in [t_0,\infty)}\Tr|e^{-t\Delta_{\overline{\partial},m,0,z_n}}-e^{-t\Delta_{\overline{\partial},m,0,\mathrm{abs}}}|=0.
\end{equation}
Finally  it is immediate to check that \eqref{pocoloco} is in turn equivalent to saying that for any arbitrarily fixed $t_0>0$ we have $$\lim_{s\rightarrow 0}\sup_{t\in [t_0,\infty)}\Tr|e^{-t\Delta_{\overline{\partial},m,0,s}}-e^{-t\Delta_{\overline{\partial},m,0,\mathrm{abs}}}|=0.$$ The theorem is thus proved.
\end{proof}

\begin{cor}
\label{traceconver}
Let $t_0\in (0,\infty)$ be arbitrarily fixed. Then $$\lim_{s\rightarrow0}\Tr(e^{-t\Delta_{\overline{\partial},m,0,s}})=\Tr(e^{-t\Delta_{\overline{\partial},m,0,\mathrm{abs}}})$$
uniformly on $[t_0,\infty)$.
\end{cor}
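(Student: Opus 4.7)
The plan is to derive this corollary directly from Theorem \ref{tracenorm} by exploiting the elementary fact that for any trace-class operator $A$ on a separable Hilbert space one has the estimate $|\Tr(A)|\leq \Tr|A|=\|A\|_{B_1(H)}$. Since by \eqref{stesso} all the Hilbert spaces involved agree as sets and as Hilbert spaces, we may view $e^{-t\Delta_{\overline{\partial},m,0,s}}-e^{-t\Delta_{\overline{\partial},m,0,\abs}}$ as a trace-class operator acting on the fixed space $L^2\Omega^{m,0}(A,g_0|_A)$.

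Concretely, I would first note that the trace is linear on $B_1(H)$, so
$$\Tr(e^{-t\Delta_{\overline{\partial},m,0,s}})-\Tr(e^{-t\Delta_{\overline{\partial},m,0,\abs}})=\Tr\bigl(e^{-t\Delta_{\overline{\partial},m,0,s}}-e^{-t\Delta_{\overline{\partial},m,0,\abs}}\bigr).$$
Then the standard inequality $|\Tr(B)|\leq \Tr|B|$ valid for any $B\in B_1(H)$ yields
$$\bigl|\Tr(e^{-t\Delta_{\overline{\partial},m,0,s}})-\Tr(e^{-t\Delta_{\overline{\partial},m,0,\abs}})\bigr|\leq \Tr\bigl|e^{-t\Delta_{\overline{\partial},m,0,s}}-e^{-t\Delta_{\overline{\partial},m,0,\abs}}\bigr|$$
for every $t\in (0,\infty)$ and every $s\in (0,1]$.

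Taking the supremum over $t\in [t_0,\infty)$ on both sides and then letting $s\to 0$, Theorem \ref{tracenorm} gives
$$\limsup_{s\to 0}\sup_{t\in[t_0,\infty)}\bigl|\Tr(e^{-t\Delta_{\overline{\partial},m,0,s}})-\Tr(e^{-t\Delta_{\overline{\partial},m,0,\abs}})\bigr|\leq \lim_{s\to 0}\sup_{t\in[t_0,\infty)}\Tr\bigl|e^{-t\Delta_{\overline{\partial},m,0,s}}-e^{-t\Delta_{\overline{\partial},m,0,\abs}}\bigr|=0,$$
which is precisely the uniform convergence of the traces on $[t_0,\infty)$. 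There is essentially no obstacle here: the whole argument is a one-line consequence of the fact that the linear functional $\Tr:B_1(H)\to\mathbb{C}$ is continuous with operator norm one with respect to the trace-class norm, together with the previously established trace-norm convergence.
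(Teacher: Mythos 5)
Your argument is correct and is exactly the route the paper takes: Corollary \ref{traceconver} is stated there as an immediate consequence of Theorem \ref{tracenorm}, the only content being the standard bound $|\Tr(B)|\leq \Tr|B|$ for $B\in B_1(H)$, which you have spelled out. Nothing is missing.
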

\begin{proof}
This is an immediate consequence of Th. \ref{tracenorm}. 
\end{proof}

Now we study the behavior as $s\rightarrow 0$ of the corresponding zeta functions.  As in the previous proof let $\ell:=\dim(\ker(\Delta_{\overline{\partial},m,0,\mathrm{abs}}))$. First of all we want to show that   $\Tr(e^{-t\Delta_{\overline{\partial},m,0,\mathrm{abs}}})-\ell$ decays exponentially as $t\rightarrow \infty$. The argument is essentially the same that is used in the smooth compact  case. We recall it for the sake of completeness. Let $t\in [1,\infty)$. We have 
\begin{align}
&\nonumber \Tr(e^{-t\Delta_{\overline{\partial},m,0,\mathrm{abs}}})-\ell=\sum_{k=\ell+1}^{\infty}e^{-t\lambda_k(0)}=\sum_{k=\ell+1}^{\infty}e^{-\frac{t}{2}\lambda_k(0)}e^{-\frac{t}{2}\lambda_k(0)}\leq \sum_{k=\ell+1}^{\infty}e^{-\frac{t}{2}\lambda_k(0)}e^{-\frac{t}{2}\lambda_{\ell+1}(0)}\leq\\
&\nonumber e^{-\frac{t}{2}\lambda_{\ell+1}(0)}\sum_{k=\ell+1}^{\infty}e^{-\frac{1}{2}\lambda_k(0)}.
\end{align}
In conclusion for any  $t\in [1,\infty)$ we have 
\begin{equation}
\label{expdecay}
\Tr(e^{-t\Delta_{\overline{\partial},m,0,\mathrm{abs}}})-\ell\leq Ae^{-\frac{t}{2}\lambda_{\ell+1}(0)}
\end{equation}
 with $A=\sum_{k=\ell+1}^{\infty}e^{-\frac{1}{2}\lambda_k(0)}$. As $\lambda_{\ell+1}(0)>0$ the above inequality shows that $\Tr(e^{-t\Delta_{\overline{\partial},m,0,\mathrm{abs}}})-\ell$ decays exponentially as $t\rightarrow \infty$.
Moreover  thanks to \cite{FBei} Cor. 4.2 we know that $\Tr(e^{-t\Delta_{\overline{\partial},m,0,\mathrm{abs}}})\leq Gt^{-m}$ for each $t\in (0,1]$ and a positive constant $G$. Therefore for any $x\in \mathbb{C}$ with $\mathrm{Re}(x)>m$ the following integral
$$\frac{1}{\Gamma(x)}\int_0^{\infty}t^{x-1}(\Tr(e^{-t\Delta_{\overline{\partial},m,0,\mathrm{abs}}})-\ell)dt$$ converges and defines a holomorphic function $\zeta(\Delta_{\overline{\partial},m,0,\mathrm{abs}})(x)$ on $\{x\in \mathbb{C}: \mathrm{Re}(x)>m\}$. Furthermore by \cite{FBei} Th. 4.2 and the fact that $\Gamma(x)\lambda_k^{-x}(0)=\int_0^{\infty}t^{x-1}e^{-t\lambda_k(0)}dt$ for any $k>\ell$ we have that $\sum_{k=\ell+1}^{\infty}\lambda_k^{-x}(0)$ converges on $\{x\in \mathbb{C}: \mathrm{Re}(x)>m\}$ and $$\zeta(\Delta_{\overline{\partial},m,0,\mathrm{abs}})(x)=\frac{1}{\Gamma(x)}\int_0^{\infty}t^{x-1}(\Tr(e^{-t\Delta_{\overline{\partial},m,0,\mathrm{abs}}})-\ell)dt=\sum_{k=\ell+1}^{\infty}\lambda_k^{-x}(0)$$ on $\{x\in \mathbb{C}: \mathrm{Re}(x)>m\}$. Finally for any $s\in (0,1]$ let us label by $\zeta(\Delta_{\overline{\partial},m,0,s})(x)$
the zeta function of $\Delta_{\overline{\partial},m,0,s}:L^2\Omega^{m,0}(M,g_s)\rightarrow L^2\Omega^{m,0}(M,g_s)$. We have the following property:

\begin{teo}
\label{Zeta}
Let  $\zeta(\Delta_{\overline{\partial},m,0,\mathrm{abs}})(x)$ and $\zeta(\Delta_{\overline{\partial},m,0,s})(x)$ be the zeta functions of $\Delta_{\overline{\partial},m,0,\mathrm{abs}}$ and $\Delta_{\overline{\partial},m,0,s}$, respectively. Then $$\lim_{s\rightarrow 0}\zeta(\Delta_{\overline{\partial},m,0,s})(x)=\zeta(\Delta_{\overline{\partial},m,0,\mathrm{abs}})(x)$$ for any  $x\in \mathbb{C}$ with $\mathrm{Re}(x)>m$. Moreover the convergence is uniform on any compact subset $K$ of $\{x\in \mathbb{C}:$ $\mathrm{Re}(x)>m\}$.
\end{teo}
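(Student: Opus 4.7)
The plan is to realize the zeta functions via the Mellin representation
$\zeta(\Delta_{\overline{\partial},m,0,s})(x) = \frac{1}{\Gamma(x)}\int_0^\infty t^{x-1}\bigl(\Tr(e^{-t\Delta_{\overline{\partial},m,0,s}})-\ell\bigr)\,dt$ (valid for $s\in(0,1]$ with $\ell=\dim\ker\Delta_{\overline{\partial},m,0,s}$, which by Prop. \ref{nunu} equals $\dim\ker\Delta_{\overline{\partial},m,0,\mathrm{abs}}$ and is therefore independent of $s$), and to control the difference $\zeta(\Delta_{\overline{\partial},m,0,s})(x)-\zeta(\Delta_{\overline{\partial},m,0,\mathrm{abs}})(x)$ by splitting the $t$-integral at two cutoffs $0<t_0<T<\infty$ and estimating each piece separately.

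The three uniform-in-$s$ ingredients I would establish first are: (i) a uniform small-$t$ bound $\Tr(e^{-t\Delta_{\overline{\partial},m,0,s}})\leq C_1 t^{-m}$ for $t\in(0,1]$ and all $s\in(0,1]$; (ii) a uniform large-$t$ exponential decay $\Tr(e^{-t\Delta_{\overline{\partial},m,0,s}})-\ell\leq C_2 e^{-\beta t}$ for $t\geq 1$ and some $\beta>0$ independent of $s$; and (iii) the middle-range uniform convergence already supplied by Cor. \ref{traceconver}. All three follow from the key inequality \eqref{minmax}, namely $\nu\lambda_k(s)\geq\lambda_k(1)$. Indeed \eqref{minmax} yields $\Tr(e^{-t\Delta_{\overline{\partial},m,0,s}})\leq\Tr(e^{-(t/\nu)\Delta_{\overline{\partial},m,0,1}})$, and since $(M,g_1)$ is a smooth compact Hermitian manifold the standard heat kernel asymptotics on the right-hand side give (i) with $C_1=\nu^m\cdot\mathrm{const}$. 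For (ii) I would repeat the computation leading to \eqref{expdecay} but applied to $\Delta_{\overline{\partial},m,0,s}$, using \eqref{minmax} to uniformly bound both the factor $e^{-\frac{t}{2}\lambda_{\ell+1}(s)}\leq e^{-\frac{t}{2\nu}\lambda_{\ell+1}(1)}$ and the tail sum $\sum_{k\geq\ell+1}e^{-\frac{1}{2}\lambda_k(s)}\leq\sum_{k\geq\ell+1}e^{-\frac{1}{2\nu}\lambda_k(1)}<\infty$, yielding $\beta:=\lambda_{\ell+1}(1)/(2\nu)$. The same estimates of course apply to the limit operator $\Delta_{\overline{\partial},m,0,\mathrm{abs}}$ by \eqref{expdecay} and \cite{FBei} Cor. 4.2.

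With these bounds in hand I would fix a compact subset $K\subset\{\mathrm{Re}(x)>m\}$, set $x_0:=\min_{x\in K}\mathrm{Re}(x)>m$ and $x_1:=\max_{x\in K}\mathrm{Re}(x)$, note that $1/|\Gamma(x)|$ is bounded on $K$, and for any $\varepsilon>0$ choose $t_0$ so small that $\int_0^{t_0}t^{x_0-1-m}\,dt<\varepsilon$ and $T$ so large that $\int_T^\infty t^{x_1-1}e^{-\beta t}\,dt<\varepsilon$. Using (i) the integrals $\int_0^{t_0}t^{x-1}(\Tr_s-\Tr_0)\,dt$ are dominated uniformly in $s$ and $x\in K$ by a small constant times $\varepsilon$; using (ii) the same holds for $\int_T^\infty t^{x-1}(\Tr_s-\Tr_0)\,dt$; and for the middle piece $\int_{t_0}^T$ the integrand $t^{x-1}(\Tr_s-\Tr_0)$ is bounded pointwise by $T^{x_1-1}\sup_{t\in[t_0,T]}|\Tr_s-\Tr_0|$, which tends to zero uniformly in $t$ (hence in $x\in K$) as $s\to 0$ by Cor. \ref{traceconver}. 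Combining the three estimates and dividing by $\Gamma(x)$ gives $\sup_{x\in K}|\zeta(\Delta_{\overline{\partial},m,0,s})(x)-\zeta(\Delta_{\overline{\partial},m,0,\mathrm{abs}})(x)|\to 0$ as $s\to 0$.

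The step I expect to require the most care is (i), the uniform small-$t$ bound, since a priori the heat trace asymptotics depend on the metric $g_s$ and one might worry that the implied constants blow up as $s\to 0$. The clean way around this is the comparison argument above: \eqref{minmax} reduces the uniform estimate to a single standard Weyl-type bound on the fixed compact Hermitian manifold $(M,g_1)$, thereby sidestepping any need to track the degeneration in the small-time heat kernel.
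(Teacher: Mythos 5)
Your proposal is correct and follows essentially the same route as the paper: both rest on the comparison $\nu\lambda_k(s)\geq\lambda_k(1)$ to get a uniform small-time bound $\Tr(e^{-t\Delta_{\overline{\partial},m,0,s}})\leq C\nu^m t^{-m}$ and a uniform large-time exponential decay, combined with Cor. \ref{traceconver} for the middle range and a three-piece splitting of the Mellin integral. The only cosmetic difference is that you dispose of the tail $\int_T^\infty$ by choosing $T$ large so it is uniformly small, whereas the paper fixes the split point at $\nu$ and invokes dominated convergence there; both are valid and rely on the same estimate \eqref{largetime}.
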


\begin{proof}
First of all we need to develop some uniform estimates that will be used along the proof. Thanks to \eqref{minmax} we know that $\nu\lambda_k(s)\geq \lambda_k(1)$ for any $s\in [0,1]$. Therefore $$\sum_{k=1}^{\infty} e^{-t\nu\lambda_k(s)}\leq \sum_{k=1}^{\infty} e^{-t\lambda_k(1)} $$ for any $s\in [0,1]$ and $t\in (0,\infty)$. Thus, using the well known fact that $\Tr(e^{-t\Delta_{\overline{\partial},m,0,1}})\leq Ct^{-m}$ for $t\in (0,1]$ and some constant $C>0$, see for instance \cite{BGV} Th. 2.41,  we obtain that 
\begin{equation}
\sum_{k=1}^{\infty} e^{-t\lambda_k(s)}\leq \nu^{m}Ct^{-m}
\label{uniformupper}
\end{equation}
for any $s\in [0,1]$ and $t\in (0,\nu]$.
Moreover for $t\in [1,\infty)$, $s\in [0,1]$ and a fixed positive constant $B$ we have $\Tr(e^{-t\nu\Delta_{\overline{\partial},m,0,s}})-\ell\leq \Tr(e^{-t\Delta_{\overline{\partial},m,0,1}})-\ell\leq B e^{-\frac{t}{2}\lambda_{\ell+1}(1)}$ from which we deduce 
\begin{equation}
\label{largetime}
 \Tr(e^{-t\Delta_{\overline{\partial},m,0,s}})-\ell\leq  B e^{-\frac{t}{2\nu}\lambda_{\ell+1}(1)}
\end{equation}
for $t\in [\nu,\infty)$ and $s\in [0,1]$.  Now let $K$ be a compact subset of $\{x\in \mathbb{C}:\mathrm{Re}(x)>m\}$. Let $\epsilon>0$ be arbitrarily fixed. Given any $0<\mu\leq 1$ and $x\in K$ we have
\begin{align}
\nonumber & |\zeta(\Delta_{\overline{\partial},m,0,\mathrm{abs}})(x)-\zeta(\Delta_{\overline{\partial},m,0,s})(x)|\leq \frac{1}{|\Gamma(x)|}\int_0^{\infty}|t^{x-1}||\Tr(e^{-t\Delta_{\overline{\partial},m,0,\mathrm{abs}}})-\Tr(e^{-t\Delta_{\overline{\partial},m,0,s}})|dt=\\
& \nonumber \frac{1}{|\Gamma(x)|}\int_0^{\mu}|t^{x-1}||\Tr(e^{-t\Delta_{\overline{\partial},m,0,\mathrm{abs}}})-\Tr(e^{-t\Delta_{\overline{\partial},m,0,s}})|dt+\frac{1}{|\Gamma(x)|}\int_{\mu}^{\nu}|t^{x-1}||\Tr(e^{-t\Delta_{\overline{\partial},m,0,\mathrm{abs}}})-\Tr(e^{-t\Delta_{\overline{\partial},m,0,s}})|dt\\
& \nonumber +\frac{1}{|\Gamma(x)|}\int_\nu^{\infty}|t^{x-1}||\Tr(e^{-t\Delta_{\overline{\partial},m,0,\mathrm{abs}}})-\Tr(e^{-t\Delta_{\overline{\partial},m,0,s}})|dt.
\end{align}
Let us examine in details the above three integrals. By \eqref{uniformupper} we know that  
$|\Tr(e^{-t\Delta_{\overline{\partial},m,0,\mathrm{abs}}})-\Tr(e^{-t\Delta_{\overline{\partial},m,0,s}})|\leq 2\beta t^{-m}$ for every $t\in (0,\nu]$, $s\in [0,1]$ and  with $\beta:=C\nu^{m}$. Let us define $\alpha:= \max \{|\Gamma^{-1}(x)|:x\in K\}$ and $a=\min \{\mathrm{Re}(x):x\in K\}$. For any $x\in K$ with $x=x_1+ix_2$ and $0<t\leq1$ we have $|t^x|=|t^{x_1+ix_2}|\leq 2t^a$. Therefore, taking $\mu=\min\left\{1,\left(\frac{(a-m)\epsilon}{4\alpha\beta}\right)^{\frac{1}{a-m}}\right\}$ and   keeping in mind that $a>m$, we have
\begin{align}
& \nonumber \frac{1}{|\Gamma(x)|}\int_0^{\mu}|t^{x-1}||\Tr(e^{-t\Delta_{\overline{\partial},m,0,\mathrm{abs}}})-\Tr(e^{-t\Delta_{\overline{\partial},m,0,s}})|dt\leq 2\alpha\int_0^{\mu}|t^{x_1+ix_2-1}|\beta t^{-m}dt\leq 4\alpha\int_0^{\mu}t^{a-1}\beta t^{-m}dt=\\
&\nonumber 4\alpha\beta\int_0^{\mu}t^{a-m-1}dt=4\alpha\beta\frac{\mu^{a-m}}{a-m}\leq \epsilon.
\end{align}
Let us now define $b:=\max \{|t^{x-1}|: (t,x)\in [\mu,\nu]\times K\}$. In this way we can deduce the following estimate
$$\frac{1}{|\Gamma(x)|}\int_{\mu}^{\nu}|t^{x-1}||\Tr(e^{-t\Delta_{\overline{\partial},m,0,\mathrm{abs}}})-\Tr(e^{-t\Delta_{\overline{\partial},m,0,s}})|dt\leq \alpha \int_{\mu}^{\nu}b|\Tr(e^{-t\Delta_{\overline{\partial},m,0,\mathrm{abs}}})-\Tr(e^{-t\Delta_{\overline{\partial},m,0,s}})|dt$$
and by Cor. \ref{traceconver} we can find a sufficiently small positive $\delta_1$ such that $$\alpha \int_{\mu}^{\nu}b|\Tr(e^{-t\Delta_{\overline{\partial},m,0,\mathrm{abs}}})-\Tr(e^{-t\Delta_{\overline{\partial},m,0,s}})|dt\leq \epsilon$$ for any $s\in [0,\delta_1]$. Finally let $\xi:=\max \{\mathrm{Re}(x):x\in K\}$. Then $|t^{x-1}|\leq 2t^{\xi-1}$ for any $x\in K$ and so, thanks to \eqref{largetime}, there exist positive constants $\theta$ and $\sigma$ such that $$|t^{x-1}||\Tr(e^{-t\Delta_{\overline{\partial},m,0,\mathrm{abs}}})-\Tr(e^{-t\Delta_{\overline{\partial},m,0,s}})|\leq 2t^{\xi-1}|\Tr(e^{-t\Delta_{\overline{\partial},m,0,\mathrm{abs}}})-\Tr(e^{-t\Delta_{\overline{\partial},m,0,s}})|\leq 4\theta t^{\xi-1}e^{-\sigma t}$$ for any $t\in [\nu,\infty)$, $x\in K$ and $s\in[0,1]$. In particular we get
\begin{align}
& \nonumber \int_{\nu}^{\infty}|t^{x-1}||\Tr(e^{-t\Delta_{\overline{\partial},m,0,\mathrm{abs}}})-\Tr(e^{-t\Delta_{\overline{\partial},m,0,s}})|dt\leq \int_{\nu}^{\infty}2t^{\xi-1}|\Tr(e^{-t\Delta_{\overline{\partial},m,0,\mathrm{abs}}})-\Tr(e^{-t\Delta_{\overline{\partial},m,0,s}})|dt\leq\\
& \nonumber \int_{\nu}^{\infty}4\theta t^{\xi-1}e^{-\sigma t}dt<\infty
\end{align}
for any $x\in K$ and $s\in [0,1]$. Hence, using  the Lebesgue dominate convergence theorem and Cor. \ref{traceconver},  we obtain
\begin{align}
& \nonumber 0\leq \lim_{s\rightarrow 0} \frac{1}{|\Gamma(x)|}\int_\nu^{\infty}|t^{x-1}||\Tr(e^{-t\Delta_{\overline{\partial},m,0,\mathrm{abs}}})-\Tr(e^{-t\Delta_{\overline{\partial},m,0,s}})|dt \leq\\
&\nonumber \lim_{s\rightarrow 0}2\alpha\int_\nu^{\infty}t^{\xi-1}|\Tr(e^{-t\Delta_{\overline{\partial},m,0,\mathrm{abs}}})-\Tr(e^{-t\Delta_{\overline{\partial},m,0,s}})|dt=\\
&\nonumber 2\alpha\int_\nu^{\infty}\lim_{s\rightarrow 0}t^{\xi-1}|\Tr(e^{-t\Delta_{\overline{\partial},m,0,\mathrm{abs}}})-\Tr(e^{-t\Delta_{\overline{\partial},m,0,s}})|dt=0.
\end{align}
By the above limit we deduce that there is a positive number $\delta_2>0$ such that for any $x\in K$ and $s\in [0,\delta_2]$  the following inequality holds true $$\frac{1}{|\Gamma(x)|}\int_\nu^{\infty}|t^{x-1}||\Tr(e^{-t\Delta_{\overline{\partial},m,0,\mathrm{abs}}})-\Tr(e^{-t\Delta_{\overline{\partial},m,0,s}})|dt\leq \epsilon.$$
Summing up we proved that for any arbitrarily fixed $\epsilon >0$ there exists a sufficiently small positive $\delta>0$ such that for any $s\in [0,\delta]$ and $x\in K$ we have $$|\zeta(\Delta_{\overline{\partial},m,0,\mathrm{abs}})(x)-\zeta(\Delta_{\overline{\partial},m,0,s})(x)|\leq 3\epsilon.$$ The statement of this theorem is now an immediate consequence of the above inequality.
 \end{proof}

Finally we come to the last result of this paper. First we recall very briefly  some well known result about the heat kernel. This latter topic is thoroughly studied in many books and papers.  We refer for instance to \cite{BGV}, \cite{Gilkey}, \cite{GYA}, \cite{Gun} and the reference therein. In particular the statements below follow by arguing as in \cite{Gilkey}. Let us label by $K_A=K_M|_A$ the canonical bundle of $A$. Consider the left and the right projections $p_l:A\times A\rightarrow A$ and $p_r:A\times A\rightarrow A$ and let $K_A\boxtimes K_A^*\rightarrow A\times A$ be the vector bundle on $A\times A$ defined by $p_l^*K_A\otimes p_r^*K_A^*$. For any $(x,y)\in A\times A$ the fiber of $K_A\boxtimes K_A^*$ in $(x,y)$ is given by $K_{A_{x}}\otimes K^*_{A_{y}}$ $\cong \mathrm{Hom}(K_{A_{y}},K_{A_{x}})$. We endow the vector bundle $K_A\boxtimes K_A^*\rightarrow A\times A$ with the natural Hermitian metric induced by $h$ and we label it by $\hat{h}$. Moreover on $A\times A$ we consider the product metric induced by $h$. Let $\{\eta_{1,0},\eta_{2,0},...,\eta_{k,0},...\}$ be an orthonormal basis of $L^2\Omega^{m,0}(A,h|_A)$ made by eigenforms of \eqref{deg} with corresponding eigenvalues $\{\lambda_{1,0},\lambda_{2,0},...,\lambda_{k,0},...\}$. For any integer $k$ let $\eta_{k,0}^*\in C^{\infty}(A,K^*_A)$ be the section of $K^*_A$ induced by $\eta_{k,0}$ through $h^*_{m,0}$, that is $ \eta_{k,0}^*:=h_{m,0}^*(\eta_{k,0},\bullet)$. Then it is easy to check that for each $t>0$ the following series 
\begin{equation}
\label{marameo}
\sum_{k=1}^{\infty}e^{-t\lambda_{k,0}}\eta_{k,0}(x)\otimes \eta_{k,0}^*(y)
\end{equation}
converges in $L^2(A\times A,K_A\boxtimes K_A^*)$ and thus it defines an element $K_0(t,x,y)\in L^2(A\times A,K_A\boxtimes K_A^*)$. Moreover by local elliptic estimates and the Sobolev inequality we obtain that for any relatively compact open subset $B$ of $A$ and  positive integers $\ell$ and $j$ the series: $$\sum_{k=1}^{\infty}\lambda_{k,0}^{\ell}e^{-t\lambda_{k,0}}\eta_{k,0}(x)\otimes \eta_{k,0}^*(y)$$ converges over $B$ with respect to the $C^j$-norm and uniformly on $[t_0,\infty)$ with $t_0>0$ arbitrarily fixed. This implies that  $K_0(t,x,y)$ is $C^{\infty}$ with respect to $t$, $x$ and $y$ and moreover that
 $$e^{-t\Delta_{\overline{\partial},m,0,\mathrm{abs}}}\omega=\int_MK_0(t,x,y)\omega(y)\dvol_{h}(y).$$   We are in position to state the next result.

\begin{teo}
\label{HilSCon}
For each $s\in (0,1]$ let $K_s(t,x,y)\in C^{\infty}(M\times M,K_M\boxtimes K_M^*)$ be the heat kernel of $\Delta_{\overline{\partial},m,0,s}:L^2\Omega^{m,0}(M,g_s)\rightarrow L^2\Omega^{m,0}(M,g_s)$. Then for any arbitrarily fixed $t_0>0$ we have $$\lim_{s\rightarrow 0}K_s(t,x,y)=K_0(t,x,y)$$ in $L^2(A\times A, K_A\boxtimes K_A^*)$ and uniformly on $[t_0,\infty)$.
\end{teo}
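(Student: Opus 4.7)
My plan is to deduce the $L^2$-convergence of kernels from the trace-class convergence of the corresponding heat operators, which has already been established in Th. \ref{tracenorm}. The bridge between the two statements is the classical identification, for an integral operator on a Hilbert space, of the $L^2$-norm of its Schwartz kernel with the Hilbert-Schmidt norm of the operator.

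More precisely, I would regard every heat operator $e^{-t\Delta_{\overline{\partial},m,0,s}}$ as acting on the common Hilbert space $H:=L^2\Omega^{m,0}(A,g_0|_A)=L^2\Omega^{m,0}(M,g_s)$ guaranteed by \eqref{stesso}, set $T_s:=e^{-t\Delta_{\overline{\partial},m,0,s}}-e^{-t\Delta_{\overline{\partial},m,0,\mathrm{abs}}}$, and establish the identification
\begin{equation*}
\bigl\|K_s(t,\cdot,\cdot)-K_0(t,\cdot,\cdot)\bigr\|^2_{L^2(A\times A,\,K_A\boxtimes K_A^*)}=\|T_s\|^2_{HS(H)}.
\end{equation*}
To do this I would pick, for each $s$, an orthonormal basis $\{\eta_k(s)\}$ of $H$ made of eigenforms of $\Delta_{\overline{\partial},m,0,s}$, note (just as the author does in constructing $K_0$) that the induced family $\{\eta_k(s)(x)\otimes\eta_j(s)^*(y)\}_{k,j}$ is an orthonormal basis of $L^2(A\times A,K_A\boxtimes K_A^*)$, expand both $K_s$ and $K_0$ in this basis via their spectral representations, and carry out a short orthogonality computation that converts $\|K_s-K_0\|^2_{L^2(A\times A)}$ into $\sum_k|\mu_k(s)|^2$, where the $\mu_k(s)$ are the eigenvalues of $T_s$.

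The second ingredient is the elementary spectral bound $\|T\|^2_{HS}\leq\|T\|_{op}\cdot\|T\|_{B_1}$, valid for any self-adjoint trace-class operator $T$ on a separable Hilbert space. Since both heat operators are positive self-adjoint contractions on $H$, one has $\|T_s\|_{op}\leq 2$ uniformly in $s$ and $t$, whence
\begin{equation*}
\bigl\|K_s-K_0\bigr\|^2_{L^2(A\times A,K_A\boxtimes K_A^*)}=\|T_s\|^2_{HS}\leq 2\,\Tr|T_s|.
\end{equation*}
Taking the supremum over $t\in[t_0,\infty)$ and applying Th. \ref{tracenorm} immediately yields the desired uniform convergence on $[t_0,\infty)$.

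The main obstacle I anticipate is the identification of the two norms above, because the classical heat kernel $K_s$ is defined via integration against $\dvol_{g_s}$ while $K_0$ is defined via integration against $\dvol_{g_0}$, so a priori $K_s$ and $K_0$ are ``normalized differently''. What saves us is precisely the metric-invariance of the $L^2$-pairing on $(m,0)$-forms encoded in \eqref{equality}, namely $g^*_{s,m,0}\dvol_{g_s}=g^*_{0,m,0}\dvol_{g_0}$; this ensures that the tensor products $\eta_k(s)\otimes\eta_j(s)^*$ have the same $L^2(A\times A,g_0)$-norm irrespective of $s$, so the orthonormal basis argument goes through uniformly. Once this bookkeeping is accounted for, the rest of the argument is the routine combination of the Hilbert-Schmidt bound above with Th. \ref{tracenorm}.
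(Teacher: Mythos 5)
Your proposal is correct, and it takes a genuinely different and shorter route than the paper. The paper proves the theorem from scratch at the level of eigenfunctions: it fixes a sequence $s_n\rightarrow 0$, extracts via Th. \ref{spectralth} a subsequence $\{z_n\}$ along which the eigenforms converge, splits the spectral expansions of $K_{z_n}$ and $K_0$ into a head of length $\overline{k}$ and a tail controlled by the uniform bound \eqref{verdon}, and estimates each head term by $e^{-t\lambda_{k,0}}\sqrt{2-2\langle\eta_{k,0},\eta_{k,z_n}\rangle^2_{L^2\Omega^{m,0}(A,g_0|_A)}}+|e^{-t\lambda_{k,0}}-e^{-t\lambda_{k,z_n}}|$ --- essentially repeating, in Hilbert--Schmidt form, the head/tail argument already carried out for Th. \ref{tracenorm}. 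You instead observe that the $L^2(A\times A,K_A\boxtimes K_A^*)$-distance of the kernels is the Hilbert--Schmidt norm of $T_s=e^{-t\Delta_{\overline{\partial},m,0,s}}-e^{-t\Delta_{\overline{\partial},m,0,\mathrm{abs}}}$ on the fixed Hilbert space \eqref{stesso}, and then use $\|T_s\|_{\mathrm{HS}}^2\leq\|T_s\|_{\op}\Tr|T_s|\leq 2\Tr|T_s|$ together with Th. \ref{tracenorm}; this is cleaner, avoids a second subsequence extraction, and makes transparent that $L^2$-convergence of the kernels is a formal consequence of trace-norm convergence plus the uniform operator bound. The one point you must nail down --- and you correctly flag it --- is the normalization: the identity $\|K_s-K_0\|_{L^2(A\times A,K_A\boxtimes K_A^*)}=\|T_s\|_{\mathrm{HS}}$ holds provided the kernel of $e^{-t\Delta_{\overline{\partial},m,0,s}}$ is written relative to $\dvol_{g_0}$, i.e. as $\sum_k e^{-t\lambda_k(s)}\eta_k(s)(x)\otimes g^*_{0,m,0}(\eta_k(s),\bullet)(y)$; by \eqref{equality} this represents the same operator as the kernel taken relative to $\dvol_{g_s}$ (the two dualized sections differ by the bounded density $\dvol_{g_0}/\dvol_{g_s}$), and with this convention $\{\eta_k(s)(x)\otimes\eta_j(s)^*(y)\}_{k,j}$ is indeed orthonormal in $L^2(A\times A,K_A\boxtimes K_A^*)$ because, again by \eqref{equality}, an orthonormal basis for $g_s$ is one for $g_0$. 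Since the paper's own computation of $\|\eta_{k,0}\otimes\eta_{k,0}^*-\eta_{k,z_n}\otimes\eta_{k,z_n}^*\|_{L^2(A\times A,K_A\boxtimes K_A^*)}$ relies on exactly the same convention, this is bookkeeping rather than a gap, and your argument is complete once it is stated explicitly.
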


\begin{proof}
Let $t_0\in (0,\infty)$ and $0<\epsilon<1$ be arbitrarily fixed and let  $\{s_n\}\subset (0,1]$ be any sequence such that $s_n\rightarrow 0$ as $n\rightarrow \infty$. Let  $\{\eta_{1,s_n},\eta_{2,s_n},...,\eta_{k,s_n},...\}$ be an orthonormal basis of $L^2\Omega^{m,0}(A,g_{s_n}|_A)$ made by eigenforms of \eqref{nondeg} with corresponding eigenvalues $\{\lambda_{1,s_n},\lambda_{2,s_n},...,\lambda_{k,s_n},...\}$. Thanks to Th. \ref{spectralth} we know that there exists a subsequence $\{z_n\}\subset \{s_n\}$ and $\{\eta_{1,0},\eta_{2,0},...,\eta_{k,0},...\}$, an orthonormal basis of $L^2\Omega^{m,0}(A,h|_A)$ made by eigenforms of \eqref{deg} with corresponding eigenvalues $\{\lambda_{1,0},\lambda_{2,0},...,\lambda_{k,0},...\}$, such that $\eta_{k,z_n}\rightarrow \eta_{k,0}$ as $n\rightarrow \infty$ in 
$L^2\Omega^{m,0}(A,h|_A)$ for each positive integer $k$. As in \eqref{carabrone} let $\overline{k}$ be a fixed positive integer such that 
\begin{equation}
\label{verdon}
\sum_{k=\overline{k}+1}^{\infty}e^{-t\lambda_{k,s}}\leq \epsilon/2
\end{equation}
for each $s\in [0,1]$.
We have

\begin{align}
\nonumber & \|K_{z_n}(t,x,y)-K_0(t,x,y)\|_{L^2(A\times A,K_A\boxtimes K^*_A)}\leq\\
&\nonumber \|\sum_{k=1}^{\overline{k}}e^{-t\lambda_{k,0}}\eta_{k,0}(x)\otimes \eta_{k,0}^*(y)-\sum_{k=1}^{\overline{k}}e^{-t\lambda_{k,z_n}}\eta_{k,z_n}(x)\otimes \eta_{k,z_n}^*(y)\|_{L^2(A\times A,K_A\boxtimes K^*_A)}+\\
&\nonumber \|\sum_{k=\overline{k}+1}^{\infty}e^{-t\lambda_{k,0}}\eta_{k,0}(x)\otimes \eta_{k,0}^*(y)+\sum_{k=\overline{k}+1}^{\infty}e^{-t\lambda_{k,z_n}}\eta_{k,z_n}(x)\otimes \eta_{k,z_n}^*(y)\|_{L^2(A\times A,K_A\boxtimes K^*_A)}\leq\\
&\nonumber \|\sum_{k=1}^{\overline{k}}e^{-t\lambda_{k,0}}\eta_{k,0}(x)\otimes \eta_{k,0}^*(y)-\sum_{k=1}^{\overline{k}}e^{-t\lambda_{k,z_n}}\eta_{k,z_n}(x)\otimes \eta_{k,z_n}^*(y)\|_{L^2(A\times A,K_A\boxtimes K^*_A)}+\\
&\nonumber \sum_{k=\overline{k}+1}^{\infty}\|e^{-t\lambda_{k,0}}\eta_{k,0}(x)\otimes \eta_{k,0}^*(y)\|_{L^2(A\times A,K_A\boxtimes K^*_A)}+\sum_{k=\overline{k}+1}^{\infty}\|e^{-t\lambda_{k,z_n}}\eta_{k,z_n}(x)\otimes \eta_{k,z_n}^*(y)\|_{L^2(A\times A,K_A\boxtimes K^*_A)}\leq\\
& \nonumber \|\sum_{k=1}^{\overline{k}}e^{-t\lambda_{k,0}}\eta_{k,0}(x)\otimes \eta_{k,0}^*(y)-\sum_{k=1}^{\overline{k}}e^{-t\lambda_{k,z_n}}\eta_{k,z_n}(x)\otimes \eta_{k,z_n}^*(y)\|_{L^2(A\times A,K_A\boxtimes K^*_A)}+\epsilon \leq\\
& \nonumber \sum_{k=1}^{\overline{k}}\|e^{-t\lambda_{k,0}}\eta_{k,0}(x)\otimes \eta_{k,0}^*(y)-e^{-t\lambda_{k,z_n}}\eta_{k,z_n}(x)\otimes \eta_{k,z_n}^*(y)\|_{L^2(A\times A,K_A\boxtimes K^*_A)}+\epsilon.
\end{align}
We can estimates  $\|e^{-t\lambda_{k,0}}\eta_{k,0}(x)\otimes \eta_{k,0}^*(y)-e^{-t\lambda_{k,z_n}}\eta_{k,z_n}(x)\otimes \eta_{k,z_n}^*(y)\|_{L^2(A\times A,K_A\boxtimes K^*_A)}$ as follows:
\begin{align}
& \nonumber \|e^{-t\lambda_{k,0}}\eta_{k,0}(x)\otimes \eta_{k,0}^*(y)-e^{-t\lambda_{k,z_n}}\eta_{k,z_n}(x)\otimes \eta_{k,z_n}^*(y)\|_{L^2(A\times A,K_A\boxtimes K^*_A)}\leq\\
&\nonumber \|e^{-t\lambda_{k,0}}\eta_{k,0}(x)\otimes \eta_{k,0}^*(y)-e^{-t\lambda_{k,0}}\eta_{k,z_n}(x)\otimes \eta_{k,z_n}^*(y)\|_{L^2(A\times A,K_A\boxtimes K^*_A)}+ \\
& \nonumber \|e^{-t\lambda_{k,0}}\eta_{k,z_n}(x)\otimes \eta_{k,z_n}^*(y)-e^{-t\lambda_{k,z_n}}\eta_{k,z_n}(x)\otimes \eta_{k,z_n}^*(y)\|_{L^2(A\times A,K_A\boxtimes K^*_A)}=\\
& \nonumber e^{-t\lambda_{k,0}}\|\eta_{k,0}(x)\otimes \eta_{k,0}^*(y)-\eta_{k,z_n}(x)\otimes \eta_{k,z_n}^*(y)\|_{L^2(A\times A,K_A\boxtimes K^*_A)}+|e^{-t\lambda_{k,0}}-e^{-t\lambda_{k,z_n}}|=\\
&\nonumber e^{-t\lambda_{k,0}}\sqrt{2-2\langle\eta_{k,0},\eta_{k,z_n}\rangle^2_{L^2\Omega^{m,0}(A,h|_A)}}+|e^{-t\lambda_{k,0}}-e^{-t\lambda_{k,z_n}}|.
\end{align}
Altogether we have shown that $$\|K_{z_n}(t,x,y)-K_0(t,x,y)\|_{L^2(A\times A,K_A\boxtimes K^*_A)}\leq \sum_{k=1}^{\overline{k}}e^{-t\lambda_{k,0}}\sqrt{2-2\langle\eta_{k,0},\eta_{k,z_n}\rangle^2_{L^2\Omega^{m,0}(A,h|_A)}}+\sum_{k=\ell+1}^{\overline{k}}|e^{-t\lambda_{k,0}}-e^{-t\lambda_{k,z_n}}|+\epsilon$$
where, likewise the previous cases, $\ell:=\dim(\ker(\Delta_{\overline{\partial},m,0,\mathrm{abs}}))$.
Arguing as in the proof of Th. \ref{tracenorm} we can find a sufficiently big integer $\overline{n}>0$ such that for any $n>\overline{n}$, $k=0,...,\overline{k}$ and $t\in [t_0,\infty)$ we  have $$\sum_{k=1}^{\overline{k}}e^{-t\lambda_{k,0}}\sqrt{2-2\langle\eta_{k,0},\eta_{k,z_n}\rangle^2_{L^2\Omega^{m,0}(A,h|_A)}}+\sum_{k=\ell+1}^{\overline{k}}|e^{-t\lambda_{k,0}}-e^{-t\lambda_{k,z_n}}|\leq \epsilon.$$
So we have just proved that for any sequence $\{s_n\}_{n\in\mathbb{N}}\subset (0,1]$ with $s_n\rightarrow 0$ as $n\rightarrow \infty$ there exists a subsequence $\{z_n\}_{n\in\mathbb{N}}\subset \{s_n\}_{n\in\mathbb{N}}$ such that for any arbitrarily fixed $0<\epsilon<1$ and $t_0>0$ there exists a positive integer $\overline{n}$ such that 
$$\|K_{z_n}(t,x,y)-K_0(t,x,y)\|_{L^2(A\times A,K_A\boxtimes K^*_A)}\leq 2\epsilon$$ for any $n>\overline{n}$ and $t\in [t_0,\infty)$.
In other words  for any sequence $\{s_n\}_{n\in\mathbb{N}}\subset (0,1]$ with $s_n\rightarrow 0$ as $n\rightarrow \infty$ there exists a subsequence $\{z_n\}_{n\in\mathbb{N}}\subset \{s_n\}_{n\in\mathbb{N}}$ such that for any arbitrarily fixed $t_0>0$ we have  $$\lim_{n\rightarrow \infty} \|K_{z_n}(t,x,y)-K_0(t,x,y)\|_{L^2(A\times A,K_A\boxtimes K^*_A)}=0$$  uniformly on $[t_0,\infty)$. We can thus conclude  that for any arbitrarily fixed $t_0>0$  $$\lim_{s\rightarrow 0} \|K_{s}(t,x,y)-K_0(t,x,y)\|_{L^2(A\times A,K_A\boxtimes K^*_A)}=0$$  uniformly on $[t_0,\infty)$ as desired.

\end{proof}

\section{Examples and applications}

This last section is devoted to some examples and applications. First of all we want to show that Hermitian pseudometrics appear naturally when we deal with singular complex projective varieties endowed with the Fubini-Study metric and more generally when we consider  compact and irreducible Hermitian complex spaces.  Complex  spaces are a classical topic in complex geometry and we refer to \cite{GeFi} and \cite{GrRe} for an in-depth treatment. Here we recall that an irreducible complex space $X$ is a reduced complex space such that $\reg(X)$, the regular part of $X$, is connected. Furthermore we recall  that a paracompact  and reduced complex space $X$ is said \emph{Hermitian} if  the regular part of $X$ carries a Hermitian metric $h$  such that for every point $p\in X$ there exists an open neighborhood $U\ni p$ in $X$, a proper holomorphic embedding of $U$ into a polydisc $\phi: U \rightarrow \mathbb{D}^N\subset \mathbb{C}^N$ and a Hermitian metric $g$ on $\mathbb{D}^N$ such that $(\phi|_{\reg(U)})^*g=h$, see for instance \cite{Ohsa} or \cite{JRu}. In this case we will write $(X,h)$ and with a little abuse of language we will say that $h$ is a Hermitian metric on $X$. Clearly any analytic subvariety of a complex Hermitian manifold endowed with the metric induced by the restriction of the  metric of the ambient space is a Hermitian complex space. In particular, within this class of examples, we have any complex projective variety $V\subset \mathbb{C}\mathbb{P}^n$ endowed  with the K\"ahler metric induced by the Fubini-Study metric of $\mathbb{C}\mathbb{P}^n$.  As showed by a very deep result due to Hironaka, the singularities of a complex space can be resolved. We refer to the celebrated work of Hironaka \cite{Hiro}, to \cite{Aroca}, \cite{BiMi} and \cite{HH} for a thorough discussion  on this subject. Furthermore we refer to \cite{GMMI} and \cite{MaMa}  for a quick introduction. Below we simply provide a very brief account with the material that is  strictly necessary for our purposes. Let $X$ be a compact and irreducible complex space. Then there exists a compact complex manifold $M$, a divisor with only normal crossings $D\subset M$ and a surjective holomorphic map $\pi:M\rightarrow X$ such that $\pi^{-1}(\sing(X))=D$ and 
\begin{equation}
\label{hiro}
\pi|_{M\setminus D}: M\setminus D\longrightarrow X\setminus \sing(X)
\end{equation}
is a biholomorphism. Assume now that $(X,h)$ is a compact and irreducible Hermitian complex space. Then, by the very definition of Hermitian complex space, it is immediate to deduce that $\pi^*h$ extends smoothly on the whole $M$ as a positive semidefinite Hermitian product strictly positive on $M\setminus D$. In other words $\pi^*h$ becomes a Hermitian pseudometric on $M$ whose degeneracy locus $Z$ is contained in $D$.\\ 
Now we continue with the next proposition that provides a quite general situation to which the theorems of the previous sections apply.

\begin{prop}
\label{smoothdeg}
Let $(M,J)$ be a compact complex manifold of complex dimension $m$. Let $p:M\times [0,1]\rightarrow M$ be the canonical projection and let $g_s\in C^{\infty}(M\times [0,1], p^*T^*M\otimes p^*T^*M)$ be a smooth section of $p^*T^*M\otimes p^*T^*M\rightarrow M\times[0,1]$ such that:
\begin{enumerate}
\item $g_s(JX,JY)=g_s(X,Y)$ for any $X,Y\in \mathfrak{X}(M)$ and $s\in [0,1]$;
\item $g_s$ is a Hermitian metric on $M$ for any $s\in (0,1]$;
\item $g_0$ is symmetric, positive semidefinite and positive definite over $A$, where $A\subset M$ is open and dense;
\item $(A,g_1|_A)$ is parabolic;
\item There exists a positive constant $\frak{a}$ such that $g_0\leq \frak{a}g_s$ for each $s\in [0,1]$.
\end{enumerate}
Then there exists also a positive constant $\mathfrak{b}$ such that $g_s\leq\mathfrak{b} g_1$ for each $s\in [0,1]$ and thus Th. \ref{spectralth},\ref{tracenorm},\ref{Zeta} and \ref{HilSCon} apply to $(M,g_s)$ and $(A,h|_A)$ with $h:=g_0$.
\end{prop}

The proof of the above proposition will be an immediate consequence of the next lemma.
Let $F_s\in C^{\infty}(M\times [0,1],p^*\mathrm{End}(TM))$ be such that $g_1(F_s\cdot,\cdot)=g_s(\cdot,\cdot)$ for each $s\in [0,1]$. 

\begin{lemma}
\label{garara}
For any $s\in [0,1]$ and $p\in M$ let $|F|_{g_1}:M\times [0,1]\rightarrow \mathbb{R}$ be defined by $(p,s)\mapsto |F_s|_{g_1}(p)$,  where $|F_s|_{g_1}:M\rightarrow \mathbb{R}$ is the pointwise operator norm of $F_s$ with respect to $g_1$, see  \eqref{pointnorm} for the definition. Then we have the following properties:
\begin{enumerate}
\item  The function $|F|_{g_1}:M\times [0,1]\rightarrow \mathbb{R}$  is continuous.
\item The function $\||F_s|_{g_1}\|_{L^{\infty}(M)}:[0,1]\rightarrow \mathbb{R}$, defined by $s\mapsto \||F_s|_{g_1}\|_{L^{\infty}(M)}$, is continuous.
\item For each $s\in [0,1]$ we have 
$$g_s\leq \mathfrak{b}g_1$$ with
$$\mathfrak{b}=\max_{s\in [0,1]}\||F_s|_{g_1}\|_{L^{\infty}(M)}$$
\end{enumerate}
\end{lemma}

\begin{proof}
Let $p\in M$ and $s\in [0,1]$ be arbitrarily fixed.  Let 
\begin{equation}
\label{autovalorih}
\{\lambda_{1,s}(p),\lambda_{1,s}(p),\lambda_{2,s}(p),\lambda_{2,s}(p),...,\lambda_{m,s}(p),\lambda_{m,s}(p)\}
\end{equation}
 with $0\leq \lambda_{1,s}(p)\leq \lambda_{2,s}(p)\leq...\leq \lambda_{m-1,s}(p)\leq \lambda_{m,s}(p)$, be the eigenvalues of  $F_{s,p}:T_pM\rightarrow T_pM$. Then it is well known that 
\begin{equation}
\label{popoh}
|F_s|_{g_1}(p)=\sup_{0\neq v\in T_pM}\frac{g_1(F_{s,p}v,v)}{g_1(v,v)}=\lambda_{m,s}(p).
\end{equation}
Now we observe that, for any $k=1,2,..,m$, the function $\lambda_{k}:M\times [0,1]\rightarrow \mathbb{R}$ defined as  $\lambda_{k}(p,s):=\lambda_{k,s}(p)$ is continuous. This follows easily by the fact that $F_s$ is given in local coordinates by a real, symmetric, square matrix of rank $2m\times 2m$ whose entries are continuous (actually smooth) functions of $(2m+1)$-variables. It is in fact a classical result of linear algebra that the eigenvalues  of a real symmetric, square matrix $M=(a_{i,j})$,  whose entries $a_{i,j}:W\rightarrow \mathbb{R}$ are continuous functions defined over an open subset $W\subset \mathbb{R}^{\ell}$, are themselves continuous functions over $W$. We can thus conclude that $\lambda_{m}:M\times [0,1]\rightarrow \mathbb{R}$ is continuous and eventually this tells us that also $$[0,1]\ni s\mapsto \max_{p\in M}\lambda_{m,s}(p)\in \mathbb{R}$$ is continuous. As $\||F_s|_{g_1}\|_{L^{\infty}(M)}=\max_{p\in M}\lambda_{m,s}(p)$ for any $s\in [0,1]$ the first two points are thus established. The third point is now an immediate consequence. Namely given any $p\in M$ and $v\in T_pM$ we have $g_s(v,v)=g_1(F_sv,v)\leq \||F_s|_{g_1}\|_{L^{\infty}(M)}g_1(v,v)\leq \mathfrak{b}g_1(v,v)$.
\end{proof}

Finally we conclude with the following family of examples.

\begin{prop}
\label{example}
Let $(M,J)$ be a compact complex manifold, $D\subset M$  a normal crossing divisor, $h$  a Hermitian pseudometric on $M$ positive definite on $M\setminus D$ and $g$ a Hermitian metric on $M$. If  $f(s)$ is a smooth function on $[0,1]$ such that $f(0)=0$, $f(1)=1$ and $0<f(s)\leq1$ for $s\in (0,1)$ then $g_s:=(1-f(s))h+f(s)g$ satisfies the requirements of  Prop. \ref{smoothdeg}. 
\end{prop}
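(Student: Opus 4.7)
The plan is to verify the five conditions listed at the beginning of Section 3 one by one for the linear interpolation $g_s=(1-f(s))g_0+f(s)g_1$; four of them are essentially immediate, while the fifth needs a compactness argument. For the $J$-invariance (1), the value $g_s(X,Y)$ is a convex combination of $g_0(X,Y)$ and $g_1(X,Y)$ with nonnegative coefficients $1-f(s)$ and $f(s)$, and both $g_0$ and $g_1$ are $J$-invariant, so $g_s$ inherits the property. For (2), when $s\in(0,1]$ we have $f(s)>0$ and $1-f(s)\geq 0$, hence $f(s)g_1$ is positive definite and $(1-f(s))g_0$ is positive semidefinite, and their sum is a Hermitian metric on $M$. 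Condition (3) holds by the hypothesis on $g_0$ with $A:=M\setminus D$.

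For (4), the key observation is that since $D$ is a normal crossing divisor it is a finite union of compact complex submanifolds of $M$. By the parabolicity criterion recalled after the statement of Theorem \ref{spectralth} (and used already for $(A,g_1|_A)$ in the paper; see \cite{BeGu} Prop. 4.5), the complement $M\setminus D=A$ is parabolic with respect to any Riemannian metric on $M$, and in particular with respect to $g_1|_A$.

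The only step with substance is (5). Since $M$ is compact and $g_1$ is positive definite everywhere, the pointwise comparison
\[
C:=\sup_{p\in M}\sup_{0\neq v\in T_pM}\frac{g_0(v,v)}{g_1(v,v)}
\]
is finite; set $\mathfrak{a}:=\max\{1,C\}$. The claim is that $g_0\leq \mathfrak{a}g_s$ for every $s\in[0,1]$. At $s=0$ this is trivial. For $s\in(0,1]$, the inequality $g_0\leq Cg_1$ yields $f(s)g_1\geq (f(s)/C)g_0$, and combining with $(1-f(s))g_0\geq 0$ gives
\[
g_s=(1-f(s))g_0+f(s)g_1\geq\Bigl(1-f(s)+\tfrac{f(s)}{C}\Bigr)g_0 = \Bigl(1+f(s)(\tfrac{1}{C}-1)\Bigr)g_0.
\]
As $f(s)$ runs over $[0,1]$, the coefficient $1+f(s)(1/C-1)$ stays between $\min\{1,1/C\}=1/\mathfrak{a}$ and $\max\{1,1/C\}$, so $g_s\geq \mathfrak{a}^{-1}g_0$, i.e.\ $g_0\leq \mathfrak{a}g_s$. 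This establishes (5).

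The mild obstacle is the uniform bound in (5): one has to rule out that the comparison constant blows up as $s\to 0$. Compactness of $M$, together with the fact that the non-degenerate endpoint $g_1$ is already present with positive weight $f(s)$ in $g_s$, is exactly what is needed to absorb the degeneracy of $g_0$; without compactness, or without requiring $f(s)>0$ on $(0,1)$, this could fail.
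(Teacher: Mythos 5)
Your proposal is correct and follows essentially the same route as the paper: the first four conditions are checked directly (with parabolicity of $M\setminus D$ coming from \cite{BeGu} Prop. 4.5, exactly as in the paper), and condition (5) is reduced by compactness to a comparison $g_0\leq C g_1$ followed by an elementary convexity estimate on the coefficient of $g_0$ in $g_s$. The only cosmetic difference is the choice of constant ($\mathfrak{a}=\max\{1,C\}$ versus the paper's $\mathfrak{a}=\mathfrak{b}+1$), which does not affect the argument.
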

\begin{proof}
Obviously $g_s$ is compatible with $J$, it is a Hermitian metric whenever $s>0$ and it is a positive semidefinite Hermitian metric when $s=0$. Moreover since $D$ is a finite union of compact complex submanifolds it is known that $M\setminus D$ is parabolic with respect to any Riemannian metric on $M$, see \cite[Prop. 4.5]{BeGu}. Now  let $\frak{b}\in \mathbb{R}$ such that $h\leq \frak{b}g$ and let $\frak{a}=\frak{b}+1$.
We claim that $h\leq \frak{a}g_s$. In fact $h\leq \frak{a} g_s$ if and only if $0\leq \frak{a}h-\frak{a}f(s)h+\frak{a}f(s)g-h$ that is $0\leq \frak{b}h+h-\frak{b}f(s)h-f(s)h+\frak{a}f(s)g-h$ which in turn is equivalent to $0\leq \frak{b}(1-f(s))h+f(s)(\frak{a}g-h)$. Finally  it is immediate to check that this last inequality holds true.
\end{proof}

\end{document}